 \newcommand{\Manoa}{M\=anoa }
\newcommand{\Hawaii}{Hawai\kern.05em`\kern.05em\relax i }
\newtheorem{theorem}{Theorem}[section]
\newtheorem{corollary}[theorem]{Corollary}
\newtheorem{lemma}[theorem]{Lemma}
\newtheorem{proposition}[theorem]{Proposition}
\theoremstyle{definition}
\newtheorem{definition}[theorem]{Definition}
\newtheorem*{definition*}{Definition}
\theoremstyle{remark}
\newtheorem*{lemma*}{Lemma}
\newtheorem*{proposition*}{Proposition}
\newtheorem*{theorem*}{Theorem}
\newtheorem*{corollary*}{Corollary}
\newtheorem*{claim*}{Claim}
\theoremstyle{definition}
\newtheorem{question}[theorem]{Question}
\newcommand{\im}{\operatorname{im}}
\newcommand{\HH}{\operatorname{H}}
\newcommand{\CC}{\operatorname{C}}
\newcommand{\ZZ}{\operatorname{Z}}
\newcommand{\BB}{\operatorname{B}}
\newcommand{\F}{\operatorname{F}}
\begin{document}

\title{Braided Thompson groups with and without quasimorphisms}
\author{Francesco Fournier-Facio, Yash Lodha and Matthew C. B. Zaremsky}
\date{\today}
\maketitle

\begin{abstract}
We study quasimorphisms and bounded cohomology of a variety of braided versions of Thompson groups. Our first main result is that the Brin--Dehornoy braided Thompson group $bV$ has an infinite-dimensional space of quasimorphisms and thus infinite-dimensional second bounded cohomology. This implies that despite being perfect, $bV$ is not uniformly perfect, in contrast to Thompson's group $V$. We also prove that relatives of $bV$ like the ribbon braided Thompson group $rV$ and the pure braided Thompson group $bF$ similarly have an infinite-dimensional space of quasimorphisms. Our second main result is that, in stark contrast, the close relative of $bV$ denoted $\widehat{bV}$, which was introduced concurrently by Brin, has trivial second bounded cohomology. This makes $\widehat{bV}$ the first example of a left-orderable group of type $\F_\infty$ that is not locally indicable and has trivial second bounded cohomology. This also makes $\widehat{bV}$ an interesting example of a subgroup of the mapping class group of the plane minus a Cantor set that is non-amenable but has trivial second bounded cohomology, behaviour that cannot happen for finite-type mapping class groups.
\end{abstract}

\section{Introduction}

The braided Thompson group $bV$ was introduced independently by Brin \cite{brin07} and Dehornoy \cite{dehornoy}, as a braided version of the classical Thompson group $V$. This group and its relatives have proven to be important objects in geometric group theory, in particular thanks to their connections to big mapping class groups. Recall that a surface is said to be \emph{of infinite type} if its fundamental group is not finitely generated, and to such a surface one can associate a mapping class group, in the same way as for finite-type surfaces; such mapping class groups are called \emph{big}. As an example of the connection, certain braided Thompson groups are dense in the big mapping class group of a compact surface minus a Cantor set \cite[Corollary~3.20]{skipperwustab}, and hence serve as finitely generated ``approximations'' of these big mapping class groups. For more on connections between braided Thompson groups and big mapping class groups see, e.g., \cite{abfpw,aramayona21,funar04,funar08,funar11,genevois21}.

In this paper we are concerned with the question of which braided Thompson groups have infinite-dimensional space of quasimorphisms, or second bounded cohomology, and which do not. A function $q : \Gamma \to \mathbf{R}$ is called a \emph{quasimorphism} if the quantity $|q(g) + q(h) - q(gh)|$ is uniformly bounded: its supremum is called the \emph{defect} of $q$ and is denoted by $D(q)$. We denote by $Q(\Gamma)$ the space of quasimorphisms of $\Gamma$, modulo bounded functions
(sometimes this notation is used to denote the space of homogeneous quasimorphisms, which is canonically isomorphic \cite[2.2.2]{calegari}).
We may sometimes colloquially refer to a group as having ``no quasimorphisms'' if it only has bounded ones. The objects $Q(\Gamma)$ are of great interest in dynamics, geometric group theory, geometric topology and symplectic geometry. For example, they are intimately connected with bounded cohomology \cite{frigerio} and stable commutator length \cite{calegari}. In this context, Thompson-like groups have played an important role: for instance they have repeatedly served as the first finitely presented examples achieving certain values of stable commutator length \cite{ghysserg, zhuang, FFL1}.

In addition to $bV$, we also inspect the ribbon braided Thompson group $rV$, the pure braided Thompson group $bF$, the kernel $bP$ of the projection $bV\to V$, and most importantly the group $\widehat{bV}$, which was introduced by Brin along with $bV$ in \cite{brin07}. One can view $\widehat{bV}$ as a braided analogue of a Cantor set point stabiliser in $V$. See Section~\ref{sec:braided_thompson} for the definitions of all these braided Thompson groups. The group $\widehat{bV}$, despite its strong similarities to $bV$, has extremely different behaviour when it comes to quasimorphisms and bounded cohomology, as our two main results make clear. \\

Our main results are as follows.

\begin{theorem}\label{thm:main1}
For $\Gamma$ any of the braided Thompson groups $bV$, $rV$, $bF$ or $bP$, the space $Q(\Gamma)$ is infinite-dimensional, and thus also the second bounded cohomology $\HH_b^2(\Gamma)$ is infinite-dimensional.
\end{theorem}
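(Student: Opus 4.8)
The plan is to find, inside each $\Gamma\in\{bV,rV,bF,bP\}$, a rich supply of quasimorphisms supported on the braid groups that occur as ``local'' symmetries of the underlying braided Cantor set, and to transfer them to $\Gamma$. The main external input is the classical fact that the braid group $B_n$ has infinite-dimensional $Q(B_n)$ for every $n\ge 3$: for $n=3$ this is clear since $B_3$ is virtually free, and for general $n$ it follows from the acylindrical hyperbolicity of $B_n$ via the Bestvina--Fujiwara construction (alternatively, from the signature quasimorphisms of Gambaudo--Ghys). The same applies to the pure braid groups $P_n$ (note $P_3\cong F_2\times\mathbf{Z}$, and $P_n\le B_n$ has finite index in general) and, in the ribbon case, to the ribbon braid groups, which surject onto $B_n$.

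Using the braided diagram description of these groups recalled in Section~\ref{sec:braided_thompson}, I would fix a sequence $C_1,C_2,\dots$ of pairwise disjoint dyadic clopen pieces of the Cantor set, subdivide each $C_k$ into $n$ further clopen pieces, and let those $n$ pieces be permuted and braided: this realises a copy of $B_n$ (resp.\ $P_n$ for $bF$ and $bP$, resp.\ the ribbon braid group for $rV$) inside $\Gamma$, supported on $C_k$. Elements supported on distinct $C_k$ commute, and any element of $\Gamma$ only involves finitely many of them, so these assemble to an embedding of the restricted infinite direct sum $\Lambda:=\bigoplus_{k\in\mathbf{N}}B_n$ (or $\bigoplus_k P_n$, etc.) into $\Gamma$; one may also note that a copy of $F$ inside $\Gamma$ normalises $\Lambda$ by shifting the pieces $C_k$, although this will not be needed. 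Since $Q(B_n)\ne 0$, passing to finitely supported tuples embeds $\bigoplus_k Q(B_n)$ into $Q(\Lambda)$, so $\Lambda$ itself already carries, for each $k$, a homogeneous quasimorphism supported exactly on its $k$-th summand and not at bounded distance from a homomorphism.

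The crux -- and the step I expect to be the main obstacle -- is to promote these to quasimorphisms of $\Gamma$, since quasimorphisms need not extend across an inclusion. The approach I would take is a direct Gambaudo--Ghys-type construction: given $g\in\Gamma$, choose a braided tree-pair representative $(T_1,\beta,T_2)$, isolate the portion of the strand diagram $\beta$ lying over the piece $C_k$, read off from it a braid $\beta_k(g)$ (in $B_m$ for some $m$ depending on the representative), and set $\phi_k(g):=q(\beta_k(g))$ for a fixed homogeneous quasimorphism $q$ on the braid group. Two things must be checked: (i) that $\phi_k$ is independent of the chosen representative up to a bounded error, i.e.\ changes only boundedly under the expansion/reduction moves generating the defining equivalence relation -- the critical case being an expansion inside $C_k$, which duplicates a strand, so one needs $q$ to be approximately invariant under strand duplication, and this is precisely where the choice of $q$ (e.g.\ a signature-type quasimorphism rather than an arbitrary Brooks quasimorphism) and the homogeneity of $q$ enter; and (ii) that $D(\phi_k)<\infty$, which should follow from the quasimorphism property of $q$ once the combinatorics of how $\beta_k$ behaves under composition of tree-pairs is understood. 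A possible alternative, if the direct construction turns out to be delicate, is to exhibit a hyperbolic space on which $\Gamma$ acts with the braid generators acting as independent WPD loxodromic elements -- for instance a curve-graph-type complex for the plane minus a Cantor set -- and apply the Bestvina--Fujiwara/Hull--Osin criterion. Granting the construction, restricting $\phi_k$ to $\Lambda$ returns $q$ on the $k$-th summand and vanishes on the others, so the $\phi_k$ are linearly independent in $Q(\Gamma)$; since moreover no nonzero finite linear combination of them is at bounded distance from a homomorphism of $\Gamma$ (by the same restriction), their images under the natural map $Q(\Gamma)\to\HH_b^2(\Gamma)$ are linearly independent, and hence $\HH_b^2(\Gamma)$ is infinite-dimensional as well.
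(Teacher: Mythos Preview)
Your central construction has a genuine gap. For an arbitrary $g=[T_-,\beta,T_+]\in\Gamma$ there is no well-defined ``portion of $\beta$ lying over $C_k$'': the element $g$ may carry $C_k$ to a completely different clopen set, braiding the strands emanating from $C_k$ with strands from elsewhere, so there is no canonical sub-braid $\beta_k(g)$ to which one could apply $q$. Your proposal only makes sense on the stabiliser of $C_k$, and even there the dependence on the representative is serious: strand duplication is not a bounded perturbation for most quasimorphisms on $B_n$ (e.g.\ Brooks quasimorphisms), and the assertion that signature-type quasimorphisms are ``approximately invariant under strand duplication'' would itself require a proof and a precise formulation compatible with the direct-limit structure. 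As written, step (i) is not established and step (ii) cannot even be formulated before $\beta_k(g)$ is defined for all $g$.

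The paper avoids this entirely. It first treats $rV$: using the embedding $rV/Z(rV)\hookrightarrow MCG(\mathbf{R}^2\setminus K)$, it shows that Bavard's hyperbolic elements $h_1,h_2$ acting on the ray graph can be realised inside $rV/Z(rV)$, so Bestvina--Fujiwara applies and $Q(rV)$ is infinite-dimensional. (This is essentially your ``alternative'' route, but carried out by reducing to Bavard's already-verified criterion rather than building a new complex.) Then, rather than extending quasimorphisms upward from a subgroup, it \emph{restricts} them downward: since $rV/bP$ is uniformly perfect, $Q(rV/bP)=0$, and left exactness of $Q$ forces $Q(rV)\to Q(bP)$ to be injective; hence $Q(\Gamma)$ is infinite-dimensional for every $bP\le\Gamma\le rV$. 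This two-step strategy---produce quasimorphisms on the largest group via a hyperbolic action, then push down via uniform perfectness of the quotient---completely bypasses the extension problem you identified as the crux.
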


\begin{theorem}\label{thm:main2}
We have $\HH^2_b(\widehat{bV})=0$.
\end{theorem}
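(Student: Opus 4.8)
The plan is to exhibit inside $\widehat{bV}$ the kind of self-similar, ``displaceable'' structure that is known to force vanishing of (second) bounded cohomology, of the sort exploited for Thompson's groups $F$ and $V$. The feature that makes this possible for $\widehat{bV}$ but not for $bV$ and its relatives is that $\widehat{bV}$, being a braided analogue of a Cantor-set point stabiliser, comes with a distinguished leaf --- equivalently a distinguished end ``at infinity'' --- which no braid is allowed to cross. Thus there is a direction into which subgroups can be pushed, and two conjugates of a subgroup that have been pushed into disjoint clopen pieces both avoiding that end will genuinely \emph{commute}, since the braiding between such pieces is trivial. In contrast, for $bV$, $rV$, $bF$ and $bP$ there is no such end: the braiding (or framing, in the case of $rV$) linking displaced copies is nontrivial, and is precisely what carries the infinite-dimensional space of quasimorphisms of Theorem~\ref{thm:main1}.

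Concretely I would proceed as follows. First, fix the combinatorial model of $\widehat{bV}$ from Section~\ref{sec:braided_thompson}, with elements represented by braided tree-pair diagrams relative to the distinguished leaf. Second, isolate a subgroup (or a generating family of subgroups) ``supported away from the distinguished end'' together with an element of $\widehat{bV}$ that compresses it toward that end, thereby producing a configuration of the type known to have vanishing bounded cohomology --- e.g.\ a lamplighter-type group, an ascending HNN extension of a boundedly acyclic group, or a union thereof --- or, alternatively, verifying directly that the family admits commuting conjugates. The key local computation throughout is the commutation statement above: disjoint clopen pieces avoiding the distinguished end can be separated by an embedded arc running off to that end, so the braid strands of one piece unlink from those of the other. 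Third, feed this into the standard machinery (Matsumoto--Morita / Monod--Nariman-type vanishing, or the $\scl$/Bavard-duality route via commuting conjugates together with perfectness of $\widehat{bV}$) to conclude $\HH^2_b(\widehat{bV};\mathbf{R})=0$; if the structure is strong enough one obtains full bounded acyclicity, but $\HH^2_b$ is all that is claimed.

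The main obstacle is the commutation computation together with the fact that finitely generated subgroups of Thompson-like groups are not supported on any fixed finite subdivision: ``push $H$ toward the distinguished end'' cannot be read off from supports, so one must argue with the actual braided tree-pair normal forms of a finite generating set, show that a single element of $\widehat{bV}$ simultaneously displaces all of them (and all of their products), and then track the braid strands of the displaced copies through the diagrams to confirm that the distinguished end really does unlink them from $H$. That strand-tracking is the crux, and it is exactly the point at which the same strategy fails for $bV$ and its non-pointed relatives.
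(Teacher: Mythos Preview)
Your overall strategy is the right one and matches the paper's: the distinguished end of $\widehat{bV}$ lets one displace finitely generated subgroups so that a subgroup and its conjugate commute, and then the ``commuting conjugates $\Rightarrow \HH^2_b=0$'' theorem of \cite{FFL1} finishes. However, there is a concrete error and a genuinely missing reduction step.

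The error is that $\widehat{bV}$ is \emph{not} perfect: its abelianisation is $\mathbf{Z}$, realised by the homomorphism $\chi_1$ sending $[T_-,\beta,T_+]$ to the right depth of $T_-$ minus that of $T_+$ (Corollary~\ref{cor:ablnz_to_1}). More importantly, this nontrivial character is exactly the obstruction to running commuting conjugates on $\widehat{bV}$ itself. An element like $x_0$ with $\chi_1(x_0)=1$ genuinely ``translates along'' the distinguished end and cannot be pushed into any region supported away from it; your displacement scheme only works for elements of the kernel $\widehat{D}=\ker\chi_1$. The paper handles this by first reducing to $\widehat{D}$ via Monod's co-amenability result (Proposition~\ref{prop:amenableext}), since $\widehat{bV}/\widehat{D}\cong\mathbf{Z}$ is amenable, and then proving that $\widehat{D}$ has commuting conjugates. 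Without this reduction your argument stalls precisely at the ``main obstacle'' you identify.

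Once restricted to $\widehat{D}$, the obstacle dissolves cleanly and does not require delicate strand-tracking: any finite subset of $\widehat{D}$ has all its tree pairs of equal right depth, so a sufficiently high power of $x_0$ conjugates the whole set into $\widehat{bV}(1)\cong bV$ (Lemma~\ref{lem:push_into_1}); a single explicit element $g=[T_2,s_1,T_2]\in\widehat{D}$ then swaps the left and right halves so that $\widehat{bV}(1)$ and $g^{-1}\widehat{bV}(1)g$ braid disjoint sets of strands and hence commute (Lemma~\ref{lem:commconjbV}). Also note that the commuting-conjugates theorem gives $\HH^2_b=0$ outright and needs no perfectness hypothesis, so your ``Bavard-duality together with perfectness'' route is neither available nor necessary.
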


Here $\HH^2_b(\Gamma)$ denotes the second bounded cohomology of a group $\Gamma$, with trivial real coefficients. This invariant was introduced by Johnson and Trauber in the context of Banach algebras \cite{johnson} and has since become a fundamental tool in geometric topology \cite{gromov}, dynamics \cite{ghys} and rigidity theory \cite{burgermonod}. For every group $\Gamma$ there is a map $Q(\Gamma) \to \HH^2_b(\Gamma)$, whose kernel is the space of real-valued homomorphisms (Proposition \ref{prop:hbqm}). Using this, Theorem \ref{thm:main2}, together with the fact that the abelianisation of $\widehat{bV}$ is isomorphic to $\mathbf{Z}$ (Corollary~\ref{cor:ablnz_to_1}), implies:

\begin{corollary}\label{cor:main}
$Q(\widehat{bV})$ is one-dimensional, spanned by the abelianisation of $\widehat{bV}$.
\end{corollary}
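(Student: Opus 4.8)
The plan is to derive the corollary formally from Theorem~\ref{thm:main2}, Corollary~\ref{cor:ablnz_to_1}, and the standard relationship between quasimorphisms, homomorphisms and second bounded cohomology recorded in Proposition~\ref{prop:hbqm}. First I would invoke Proposition~\ref{prop:hbqm}: there is a natural linear map $Q(\widehat{bV}) \to \HH^2_b(\widehat{bV})$ sending a quasimorphism to the bounded cohomology class of its coboundary, and its kernel is precisely $\Hom(\widehat{bV}, \mathbf{R})$, the space of real-valued homomorphisms (which sit inside $Q(\widehat{bV})$ as the quasimorphisms of defect zero). By Theorem~\ref{thm:main2} the target $\HH^2_b(\widehat{bV})$ is trivial, so this map is identically zero, and therefore $Q(\widehat{bV}) = \Hom(\widehat{bV}, \mathbf{R})$.

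It then remains to compute $\Hom(\widehat{bV}, \mathbf{R})$. Every homomorphism to the abelian group $\mathbf{R}$ factors uniquely through the abelianisation, so $\Hom(\widehat{bV}, \mathbf{R}) \cong \Hom(\widehat{bV}^{\mathrm{ab}}, \mathbf{R})$; and by Corollary~\ref{cor:ablnz_to_1} we have $\widehat{bV}^{\mathrm{ab}} \cong \mathbf{Z}$, whence $\Hom(\widehat{bV}, \mathbf{R}) \cong \Hom(\mathbf{Z}, \mathbf{R}) \cong \mathbf{R}$. This space is one-dimensional, and a generator is given by the abelianisation map $\widehat{bV} \twoheadrightarrow \mathbf{Z} \hookrightarrow \mathbf{R}$. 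Finally I would note that this class is genuinely nonzero in $Q(\widehat{bV})$, since $\widehat{bV}^{\mathrm{ab}}$ is infinite and hence the abelianisation homomorphism is unbounded; so the dimension is exactly one.

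There is no real obstacle here: the corollary is a purely formal consequence of the two main theorems together with the elementary computation of $\Hom(\widehat{bV},\mathbf{R})$, and all the substance lies in the inputs Theorem~\ref{thm:main2} and Corollary~\ref{cor:ablnz_to_1}. The only point deserving a moment's attention is the precise description of the kernel in Proposition~\ref{prop:hbqm} --- one wants that the vanishing of $\HH^2_b$ forces every quasimorphism to lie at bounded distance from an honest homomorphism, not merely from a cocycle --- but this is exactly what that proposition supplies.
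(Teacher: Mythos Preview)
Your proposal is correct and follows exactly the paper's approach: the paper deduces the corollary from Theorem~\ref{thm:main2} and Corollary~\ref{cor:ablnz_to_1} via Proposition~\ref{prop:hbqm}, just as you do. Your additional remark that the abelianisation map is genuinely unbounded (so the dimension is exactly one rather than at most one) is a nice touch that the paper leaves implicit.
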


One consequence of Theorem~\ref{thm:main1} is that, despite being perfect \cite{zaremsky18}, $bV$ is not uniformly perfect (Corollary \ref{cor:up}). Recall that a group $\Gamma$ is \emph{uniformly perfect} if there exists $N\in\mathbf{N}$ such that every element in $\Gamma$ can be written as a product of at most $N$ commutators. This is in contrast to the fact that Thompson's group $V$ is uniformly perfect, and even uniformly simple \cite{uniformlysimple} (in fact, $\HH^2_b(V) = 0$ \cite[6.3]{binate}). Since $bV$ is not uniformly perfect, the following natural question emerges:

\begin{question}\label{quest:scl}
Which elements of $bV$ have non-zero stable commutator length?
\end{question}

A characterisation of this phenomenon in (finite-type) mapping class groups was given in \cite{mcgscl}, and see \cite{field} for some related results for big mapping class groups. \\

Theorem~\ref{thm:main2} has interesting consequences for subgroups of big mapping class groups. Pioneering work of Bestvina and Fujiwara \cite{mcgqm} showed that every subgroup of a (finite-type) mapping class group is either virtually abelian or has infinite-dimensional $Q(\Gamma)$ (see also \cite{mcgscl}). This can be viewed as a sort of Tits-like alternative, since every quasimorphism on an amenable group is at a bounded distance from a homomorphism \cite{brooks}, whereas groups with hyperbolic features typically have an infinite-dimensional space of quasimorphisms \cite{brooks, epsteinfujiwara, hullosin}. The question of whether something similar happens for the big mapping class group $MCG(\mathbf{R}^2\setminus K)$ for $K$ a Cantor set was listed in the AIM problem list on big mapping class groups \cite[Question 4.7]{questions}. Namely, it is asked whether every subgroup $\Gamma \leq MCG(\mathbf{R}^2\setminus K)$ is either amenable or has infinite-dimensional $Q(\Gamma)$. Theorem~\ref{thm:main2} provides a negative answer to this, since $\widehat{bV}$ is non-amenable (by virtue of containing braid groups), and embeds in $MCG(\mathbf{R}^2\setminus K)$ (see Section \ref{sec:quasis}).

In fact we should point out that a negative answer to this question was already ``almost'' available in the literature. Indeed, by a result of Calegari and Chen \cite{calegarichen} every countable circularly orderable group $\Gamma$ embeds in $MCG(\mathbf{R}^2 \setminus K)$, and there are plenty of countable circularly orderable groups that are non-amenable and have a finite-dimensional space of quasimorphisms, or no quasimorphisms at all \cite{calegariPL, zhuang, FFL1}. The most straightforward example is probably Thompson's group $T$, which has no quasimorphisms by virtue of being uniformly perfect (and even uniformly simple, see e.g., \cite{Tus}). In fact, when the groups are even left-orderable, many of them have vanishing second bounded cohomology \cite{FFL1, FFL2}, and sometimes even vanishing bounded cohomology in every positive degree \cite{monodF}. As a remark, since the examples coming from the procedure in \cite{calegarichen} act on the plane by fixing a radial coordinate and acting by rotations, which is really a ``one-dimensional'' picture, one can view $\widehat{bV}$ as providing the first truly ``two-dimensional'' example, i.e., one involving genuine braids. \\

In order to prove Theorem~\ref{thm:main1}, we generally follow the approach used by Bavard in \cite{bavard} (see \cite{bavard_english} for an English translation) to show that $MCG(\mathbf{R}^2\setminus K)$ has an infinite-dimensional space of quasimorphisms. Her proof in turn makes use of the approach of Bestvina and Fujiwara in \cite{mcgqm} to finite-type mapping class groups, following suggestions of Calegari from a blogpost \cite{blogpost}. Bavard's result prompted the study of analogues of curve graphs for big mapping class groups, and arguably initiated the recent surge of interest in big mapping class groups; see \cite{survey} for more on the history of big mapping class groups.

In the course of proving Theorem~\ref{thm:main2}, we also prove that $\widehat{bV}$ is of type $\F_\infty$, meaning it has a classifying space with finitely many cells in each dimension (Corollary \ref{cor:F_infty}); this is a stronger property than finite generation and finite presentability. It is known that $bV$ and thus $\widehat{bV}$ are left-orderable \cite{ishida18}, and that $\widehat{bV}$ contains a copy of $bV$ (see Definition~\ref{def:right_depth}), which is finitely generated and perfect \cite{zaremsky18}. Therefore $\widehat{bV}$ serves as the first example of a group with the following properties:

\begin{corollary}\label{cor:why_bVhat_is_cool}
The group $\widehat{bV}$ is a left-orderable group of type $\F_\infty$ that is not locally indicable and has vanishing second bounded cohomology.\qed
\end{corollary}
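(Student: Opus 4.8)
The plan is to assemble four facts, three of which are established elsewhere in the paper or in the literature, and only one of which requires a short argument. For vanishing of second bounded cohomology I would invoke Theorem~\ref{thm:main2}, which gives $\HH^2_b(\widehat{bV})=0$ directly. For type $\F_\infty$ I would cite Corollary~\ref{cor:F_infty}. For left-orderability I would cite \cite{ishida18}, quoted in the paper for the left-orderability of $bV$ and hence of $\widehat{bV}$.

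The only clause requiring an argument is that $\widehat{bV}$ is \emph{not} locally indicable. Recall that a group is locally indicable precisely when every nontrivial finitely generated subgroup surjects onto $\mathbf{Z}$, equivalently has infinite abelianisation; so to disprove local indicability it suffices to exhibit a single nontrivial finitely generated subgroup with trivial abelianisation. The natural witness is the copy of $bV$ sitting inside $\widehat{bV}$ provided by Definition~\ref{def:right_depth}: it is finitely generated and perfect by \cite{zaremsky18}, hence nontrivial with trivial abelianisation, and therefore obstructs local indicability of $\widehat{bV}$. (This also makes the combination of hypotheses genuinely meaningful, since a locally indicable group is automatically left-orderable; the point of the example is precisely to have left-orderability without local indicability, together with vanishing $\HH^2_b$.)

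I do not anticipate any real obstacle here: the corollary is essentially a repackaging of Theorems~\ref{thm:main1}--\ref{thm:main2} and Corollary~\ref{cor:F_infty}. The one detail worth checking carefully is that the $bV$ referenced in Definition~\ref{def:right_depth} sits inside $\widehat{bV}$ as an honest subgroup (not merely as a quotient or subquotient), so that perfectness genuinely passes to a finitely generated subgroup of $\widehat{bV}$; this is immediate from the construction but should be stated explicitly.
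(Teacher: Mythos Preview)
Your proposal is correct and matches the paper's own justification essentially verbatim: the paragraph preceding the corollary assembles exactly the same four ingredients (Theorem~\ref{thm:main2} for vanishing $\HH^2_b$, Corollary~\ref{cor:F_infty} for type $\F_\infty$, \cite{ishida18} for left-orderability, and the finitely generated perfect copy of $bV$ inside $\widehat{bV}$ from Definition~\ref{def:right_depth} to witness failure of local indicability). One tiny quibble: Theorem~\ref{thm:main1} plays no role here, so your aside that the corollary repackages ``Theorems~\ref{thm:main1}--\ref{thm:main2}'' overstates things slightly.
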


A finitely generated group is \emph{indicable} if it admits a homomorphism onto $\mathbf{Z}$. A group is \emph{locally indicable} if each of its finitely generated subgroups is indicable. The combination of these properties is interesting because it shows that in the celebrated Witte Morris Theorem \cite{wm} the hypothesis of amenability cannot be weakened to the vanishing of second bounded cohomology. The first finitely generated examples were found in \cite{FFL1}: this was the first step towards finding examples with the additional property of being non-indicable \cite{FFL2}, answering a question of Navas \cite{questions_navas}. Since $\widehat{bV}$ is indicable, the existence of type $\F_\infty$ examples with these stronger properties is still open.

We will always stick to the ``$n=2$ case'' to avoid getting bogged down in notation, but the reader should note that all of our results can be adapted to the braided Higman--Thompson groups $bV_n$ (as in \cite{aroca21,skipperwu}) and their analogous subgroups $\widehat{bV_n}$, with appropriate small modifications to the arguments. It would be interesting to try and adapt our arguments to other, more complicated Thompson-like groups related to asymptotically rigid mapping class groups, e.g., for positive genus surfaces \cite{aramayona21}, or for higher-dimensional manifolds \cite{abfpw}. \\

\textbf{Acknowledgments.} We wish to thank Javier Aramayona, Mladen Bestvina, Peter Feller, Marissa Loving, Nick Vlamis for useful discussions, and the anonymous referee for helpful suggestions. The first author was supported by an ETH Z\"urich Doc.Mobility Fellowship. The second author was supported by a START-projekt grant Y-1411 of the Austrian Science Fund. The third author was supported by grant \#635763 from the Simons Foundation.

\section{Braided Thompson groups}\label{sec:braided_thompson}

The first braided Thompson group, which we denote by $bV$ and which has also been denoted by $BV$, $V_{\operatorname{br}}$, and $\operatorname{br}\!V$ in the literature, was introduced independently by Brin \cite{brin07} and Dehornoy \cite{dehornoy}, as a braided version of Thompson's group $V$. Other braided Thompson groups include the ``$F$-like'' pure braided Thompson groups $bF$ \cite{brady08}, various ``$T$-like'' braided Thompson groups \cite{funar08,funar11,witzel19}, braided Higman--Thompson groups $bV_n$ \cite{aroca21,skipperwu}, braided Brin--Thompson groups $sV_{\operatorname{br}}$ \cite{spahn}, the ``ribbon braided'' Thompson group $rV$ \cite{thumann} and braided R\"over--Nekrashevych groups $\operatorname{br}\!V_d(G)$ \cite{skipper}. Most relevant to our purposes here is a close relative $\widehat{bV}$ of $bV$, which was also introduced by Brin in \cite{brin07} (there denoted $\widehat{BV}$), and realised up to isomorphism as a concrete subgroup of $bV$ by Brady--Burillo--Cleary--Stein \cite{brady08}; see also \cite{burillo_cleary}.

Let us recall the definitions of $bV$ and $\widehat{bV}$, using the standard braided tree pair model, as in \cite{brady08,zaremsky18}. By a \emph{tree} we will always mean a finite, rooted, planar, binary tree. An element of $bV$ is represented by a \emph{representative triple} $(T_-,\beta,T_+)$, where $T_-$ is a tree, $T_+$ is a tree with the same number of leaves as $T_-$, say $n$, and $\beta$ is a braid in $B_n$. Elements of $bV$ are equivalence classes $[T_-,\beta,T_+]$ of representative triples, where the equivalence relation is given by the notion of expansion, which we now describe. 

First, denote by $\rho_n\colon B_n\to S_n$ the usual map from the braid group to the symmetric group, recording how the numbering of the strands at the bottom changes when the strands move to the top. (We may write $\rho$ for $\rho_n$ when we do not need to care about $n$.) Let $(T_-,\beta,T_+)$ be a representative triple, say with $T_\pm$ having $n$ leaves and $\beta\in B_n$, and let $1\le k\le n$. Let $T_+'$ be the tree obtained from $T_+$ by adding a caret to the $k$th leaf, let $T_-'$ be the tree obtained from $T_-$ by adding a caret to the $\rho_n(\beta)(k)$th leaf, and let $\beta'\in B_{n+1}$ be the braid obtained from $\beta$ by bifurcating the $k$th strand (counting at the bottom) into two parallel strands.

\begin{definition}[Expansion, equivalence]
With the above setup, call $(T_-',\beta',T_+')$ the $k$th \emph{expansion} of $(T_-,\beta,T_+)$. Declare that two representative triples are equivalent if one is an expansion of the other, and extend this to generate an equivalence relation on the set of representative triples.
\end{definition}

The elements of the group $bV$ are the equivalence classes $[T_-,\beta,T_+]$, and the group operation is described as follows. Given two elements $[T_-,\beta,T_+]$ and $[U_-,\gamma,U_+]$, up to expansions we can assume that $T_+=U_-$. Now we define
\[
[T_-,\beta,T_+][T_+,\gamma,U_+] \coloneqq [T_-,\beta\gamma,U_+]\text{.}
\]

Some immediate subgroups of $bV$ include Thompson's group $F$, which is the subgroup of elements of the form $[T_-,1,T_+]$, and the pure braided Thompson group $bF$, which is the subgroup of elements of the form $[T_-,\beta,T_+]$ for $\beta$ a pure braid. We will also be especially interested in the following subgroup:

\begin{definition}[The group $\widehat{bV}$]\label{def:bVhat}
For each $n\in\mathbf{N}$, let $\widehat{B}_n$ denote the standard copy of $B_{n-1}$ inside $B_n$ which only braids the first $(n - 1)$ strands. Note that if $(T_-',\beta',T_+')$ is an expansion of $(T_-,\beta,T_+)$, say with $\beta\in B_n$ and $\beta'\in B_{n+1}$, then we have $\beta\in \widehat{B}_n$ if and only if $\beta'\in\widehat{B}_{n+1}$. Thus, the equivalence classes $[T_-,\beta,T_+]$ for $\beta\in\widehat{B}_n$ form a well defined subgroup of $bV$, denoted $\widehat{bV}$.
\end{definition}

There is a convenient way to picture elements of $bV$ as (equivalence classes of) so called \emph{strand diagrams}. For an element $[T_-,\beta,T_+]$, we picture $T_+$ upside-down and below $T_-$, with $\beta$ connecting the leaves of $T_+$ up to the leaves of $T_-$. See Figure~\ref{fig:expansion} for an example of an element of $bV$, and an expansion.

\begin{figure}[htb]
\centering
\begin{tikzpicture}[line width=0.8pt,scale=0.8]
 \draw (-0.5,-1.5) to [out=-90, in=90] (-1.5,-3.5);
 \draw[line width=4pt, white] (-1.5,-1.5) to [out=-90, in=90] (0.5,-3.5);
 \draw (-1.5,-1.5) to [out=-90, in=90] (0.5,-3.5);
 \draw[line width=4pt, white] (0.5,-1.5) to [out=-90, in=90] (-0.5,-3.5);
 \draw (0.5,-1.5) to [out=-90, in=90] (-0.5,-3.5);
 \draw (1.5,-1.5) -- (1.5,-3.5);
  
 \filldraw (0,0) circle (1.5pt)   (-1.5,-1.5) circle (1.5pt)   (1.5,-1.5) circle (1.5pt)   (-1,-1) circle (1.5pt)   (1,-1) circle (1.5pt)   (-0.5,-1.5) circle (1.5pt)   (0.5,-1.5) circle (1.5pt);
 \draw (-1.5,-1.5) -- (0,0) -- (1.5,-1.5)   (-0.5,-1.5) -- (-1,-1)   (0.5,-1.5) -- (1,-1);
 
 \filldraw (0,-5) circle (1.5pt)   (-1.5,-3.5) circle (1.5pt)   (1.5,-3.5) circle (1.5pt)   (-1,-4) circle (1.5pt)   (-0.5,-4.5) circle (1.5pt)   (-0.5,-3.5) circle (1.5pt)   (0.5,-3.5) circle (1.5pt);;
 \draw (-1.5,-3.5) -- (0,-5) -- (1.5,-3.5)   (-0.5,-3.5) -- (-1,-4)   (0.5,-3.5) -- (-0.5,-4.5);
 
 \node at (2.5,-2.5) {$=$};
 
 \begin{scope}[xshift=5cm]
 \draw (-0.5,-1.5) to [out=-90, in=90] (-1.5,-3.5);
 \draw[line width=3pt, white] (-1.5,-1.5) to [out=-90, in=90] (0.5,-3.5);
 \draw (-1.5,-1.5) to [out=-90, in=90] (0.5,-3.5);
 \draw[line width=4pt, white] (0.25,-1.75) to [out=-90, in=90] (-0.75,-3.25);
 \draw (0.25,-1.75) to [out=-90, in=90] (-0.75,-3.25);
 \draw[line width=4pt, white] (0.75,-1.75) to [out=-90, in=90] (-0.25,-3.25);
 \draw (0.75,-1.75) to [out=-90, in=90] (-0.25,-3.25);
 \draw (1.5,-1.5) -- (1.5,-3.5);
  
 \filldraw (0,0) circle (1.5pt)   (-1.5,-1.5) circle (1.5pt)   (1.5,-1.5) circle (1.5pt)   (-1,-1) circle (1.5pt)   (1,-1) circle (1.5pt)   (-0.5,-1.5) circle (1.5pt)   (0.5,-1.5) circle (1.5pt)   (0.25,-1.75) circle (1.5pt)   (0.75,-1.75) circle (1.5pt);
 \draw (-1.5,-1.5) -- (0,0) -- (1.5,-1.5)   (-0.5,-1.5) -- (-1,-1)   (0.5,-1.5) -- (1,-1)   (0.25,-1.75) -- (0.5,-1.5) -- (0.75,-1.75);
 
 \filldraw (0,-5) circle (1.5pt)   (-1.5,-3.5) circle (1.5pt)   (1.5,-3.5) circle (1.5pt)   (-1,-4) circle (1.5pt)   (-0.5,-4.5) circle (1.5pt)   (-0.5,-3.5) circle (1.5pt)   (0.5,-3.5) circle (1.5pt)   (-0.25,-3.25) circle (1.5pt)   (-0.75,-3.25) circle (1.5pt);
 \draw (-1.5,-3.5) -- (0,-5) -- (1.5,-3.5)   (-0.5,-3.5) -- (-1,-4)   (0.5,-3.5) -- (-0.5,-4.5)   (-0.75,-3.25) -- (-0.5,-3.5) -- (-0.25,-3.25);
 \end{scope}
\end{tikzpicture}
\caption{An element $[T_-,\beta,T_+]$ of $bV$. We draw $T_+$ upside down, with $\beta$ as a braid from the leaves of $T_+$ up to the leaves of $T_-$. We have $\rho_4(\beta)=(1~2~3)$, so $\rho_4(\beta)(2)=3$. Thus to perform the $2$nd expansion, we add a caret to the $2$nd leaf of $T_+$, a caret to the $3$rd leaf of $T_-$, and bifurcate the $2$nd strand of $\beta$ (counting from the bottom) into two strands. Note that this element lies in the subgroup $\widehat{bV}$ since the rightmost strand does not braid with any of the others.}
\label{fig:expansion}
\end{figure}
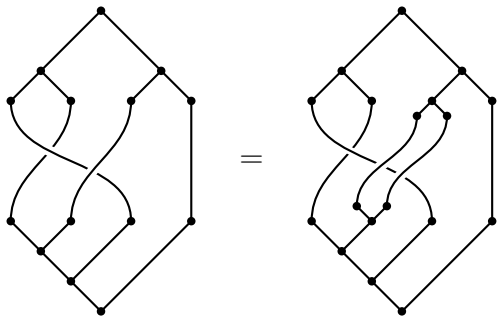

To accurately model the equivalence relation coming from expansion, and the group operation, which amounts to stacking strand diagrams, some equivalences between strand diagrams naturally emerge. The three key equivalences are shown in Figure~\ref{fig:strand_moves}.

\begin{figure}[htb]
\centering
\begin{tikzpicture}[line width=0.8pt,scale=0.8]

\draw (0,0) -- (0,-0.5) -- (-0.5,-1) -- (-0.5,-1.5) -- (0,-2) -- (0,-2.5)   (0,-0.5) -- (0.5,-1) -- (0.5,-1.5) -- (0,-2);
\filldraw (0,0) circle (1.5pt)   (0,-0.5) circle (1.5pt)   (-0.5,-1) circle (1.5pt)   (-0.5,-1.5) circle (1.5pt)   (0,-2) circle (1.5pt)   (0.5,-1) circle (1.5pt)   (0.5,-1.5) circle (1.5pt)   (0,-2.5) circle (1.5pt);

\node at (1,-1.25) {$=$};

\draw (1.5,0) -- (1.5,-2.5);
\filldraw (1.5,0) circle (1.5pt)   (1.5,-2.5) circle (1.5pt);

\begin{scope}[xshift=5.5cm]
\draw (0,0) -- (0.5,-0.5) -- (0.5,-1.5) -- (0,-2)   (0.5,-0.5) -- (1,0)   (0.5,-1.5) -- (1,-2);
\filldraw (0,0) circle (1.5pt)   (1,0) circle (1.5pt)   (0.5,-0.5) circle (1.5pt)   (0,-2) circle (1.5pt)   (0.5,-1.5) circle (1.5pt)   (1,-2) circle (1.5pt);

\node at (1.5,-1) {$=$};

\draw (2,0) -- (2,-2)   (3,0) -- (3,-2);
\filldraw (2,0) circle (1.5pt)   (2,-2) circle (1.5pt)   (3,0) circle (1.5pt)   (3,-2) circle (1.5pt);
\end{scope}

\begin{scope}[xshift=2cm,yshift=-4cm]
\draw (0,0) to [out=-90, in=90] (1,-2);
\draw[line width=4pt, white] (1,0) to [out=-90, in=90] (0,-2);
\draw (1,0) to [out=-90, in=90] (0,-2);
\draw (-0.5,-2.5) -- (0,-2) -- (0.5,-2.5);
\filldraw (0,0) circle (1.5pt)   (1,0) circle (1.5pt)   (0,-2) circle (1.5pt)   (1,-2) circle (1.5pt)   (-0.5,-2.5) circle (1.5pt)   (0.5,-2.5) circle (1.5pt);

\node at (2,-1) {$=$};

 \begin{scope}[xshift=3cm]
 \draw (0,0) to [out=-90, in=90] (2,-2);
 \draw[line width=4pt, white] (1,0) to [out=-90, in=90] (0,-2);
 \draw (1,0) to [out=-90, in=90] (0,-2);
 \draw[line width=4pt, white] (2,0) to [out=-90, in=90] (1,-2);
 \draw (2,0) to [out=-90, in=90] (1,-2);
 
 \draw (1,0) -- (1.5,0.5) -- (2,0);
 \filldraw (0,0) circle (1.5pt)   (1,0) circle (1.5pt)   (2,0) circle (1.5pt)   (1.5,0.5) circle (1.5pt)   (0,-2) circle (1.5pt)   (1,-2) circle (1.5pt)   (2,-2) circle (1.5pt);
 \end{scope}
\end{scope}
  
\end{tikzpicture}
\caption{The three key equivalences for strand diagrams, which can occur anywhere inside a strand diagram representing an element of $bV$. Further equivalences are obtained by combining these, and by rotating and reflecting the bottom one.}
\label{fig:strand_moves}
\end{figure}
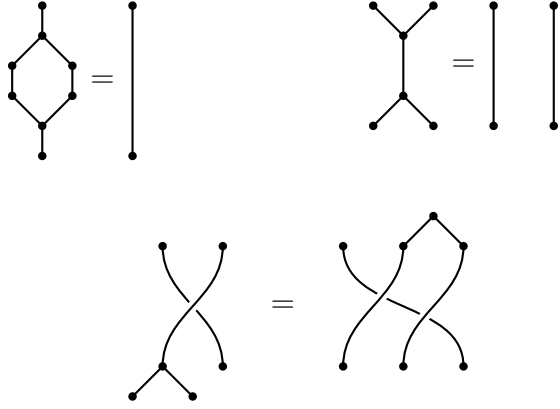

See \cite{brady08,zaremsky18} for more details.

\subsection{Using pure braids and ribbon braids}\label{ss:pure_and_ribbon}

Some more subgroups of $bV$ arise when we restrict to pure braids. As before, consider the standard projection $B_n\to S_n$ from the braid group $B_n$ to the symmetric group $S_n$. The kernel of this map is the \emph{pure braid group} $PB_n$. This leads us to the following definition of the pure braided Thompson group $bF$.

\begin{definition}[The group $bF$]\label{def:bF}
If $(T_-',\beta',T_+')$ is an expansion of $(T_-,\beta,T_+)$ then $\beta'$ is pure if and only if $\beta'$ is pure, so the equivalence classes $[T_-,\beta,T_+]$ for $\beta\in PB_n$ form a well defined subgroup of $bV$, denoted $bF$.
\end{definition}

Note that $bF$ is not normal in $bV$, but the following related subgroup is:

\begin{definition}[The group $bP$]\label{def:bP}
Let $bP$ denote the subgroup of $bV$ consisting of all $[T,\beta,T]$ such that $\beta$ is pure.
\end{definition}

For reference, the group $bP$ was denoted by $PBV$ in \cite{brady08} and by $P_{\operatorname{br}}$ in \cite{zaremsky18}. Note that the trees in $[T,\beta,T]$ must be the same, so $bP$ is strictly smaller than $bF$. The quotient $bV/bP$ is isomorphic to Thompson's group $V$, and the quotient $bF/bP$ is isomorphic to Thompson's group $F$ \cite{brady08}. More precisely, upon passing to a quotient with kernel $bP$, the elements of $bV$ change from being represented by triples $(T_-,\beta,T_+)$ for $\beta\in B_n$ to being represented by triples $(T_-,\sigma,T_+)$ for $\sigma \in S_n$. The notion of expansion has an obvious analogue for permutations, and we get equivalence classes $[T_-,\sigma,T_+]$, which are the elements of $V$. The image of $\widehat{bV}$ under the projection $bV\to V$ is a group called $\widehat{V}$, which was also considered in \cite{brin07}. The kernel $\widehat{bV}\cap bP$ of the projection $\widehat{bV}\to\widehat{V}$ has the following interesting property, which will be useful later:

\begin{lemma}\label{lem:onto_bP}
There is an epimorphism $\widehat{bV}\cap bP \to bP$.
\end{lemma}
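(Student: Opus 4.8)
The plan is to exhibit an explicit surjective homomorphism $\widehat{bV}\cap bP \to bP$ by ``forgetting'' the trees and remembering only the braid data, suitably stabilised. Recall that an element of $\widehat{bV}\cap bP$ is represented by a triple $(T_-,\beta,T_+)$ with $T_\pm$ having the same number of leaves $n$, with $\beta\in\widehat{B}_n$ (so $\beta$ only braids the first $n-1$ strands), and with $\beta$ pure; since $\beta$ is pure, the permutation $\rho_n(\beta)$ is trivial, so in an expansion the caret is added to the $k$th leaf of both $T_-$ and $T_+$, and moreover being in $bP$ forces $T_-$ and $T_+$ to represent the same element of $V$ after we account for the braid. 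The key point is that $\beta\in\widehat{B}_n$ pure can be regarded, by deleting the (unbraided, trivial) last strand, as a pure braid in $PB_{n-1}$, i.e., as an element of $bP$ via the representative triple $(U,\beta',U)$ where $U$ is a suitable tree with $n-1$ leaves.

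First I would make precise the map on representative triples: given $(T_-,\beta,T_+)$ representing an element of $\widehat{bV}\cap bP$ with $\beta\in\widehat B_n$, send it to the class in $bP$ of $(U,\bar\beta,U)$, where $\bar\beta\in PB_{n-1}$ is obtained from $\beta$ by deleting the last strand and $U$ is the common tree underlying $T_\pm$ (after choosing a representative with $T_-=T_+$, which is possible for elements of $bP$ — here I would invoke the description of $bP$ as consisting of classes $[T,\beta,T]$). Second, I would check this is well defined on equivalence classes: an expansion adds a caret to the $k$th leaf of $T$ (for $k\le n$, since $\rho_n(\beta)$ is trivial) and bifurcates the $k$th strand of $\beta$; if $k<n$ this corresponds exactly to an expansion of $(U,\bar\beta,U)$ in $bP$, and if $k=n$ (the unbraided last strand) the bifurcation produces two parallel trivial strands at the end, which after deleting the last strand is again just an expansion of $(U,\bar\beta,U)$ — so the image is unchanged in $bP$. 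Third, I would check it is a homomorphism: given two elements of $\widehat{bV}\cap bP$, after common expansions we may assume they are $[T,\beta,T]$ and $[T,\gamma,T]$ with $\beta,\gamma\in\widehat B_n$ pure, their product is $[T,\beta\gamma,T]$, and $\overline{\beta\gamma}=\bar\beta\bar\gamma$ since deleting a strand that is unbraided and trivial in both factors is multiplicative. Fourth, surjectivity: any element of $bP$ is $[T,\delta,T]$ with $\delta\in PB_m$; then $(T',\iota(\delta),T')$, where $T'$ is $T$ with one extra caret added (or simply $T$ with an extra leaf appended on the right by one expansion at a suitable leaf) and $\iota\colon PB_m\hookrightarrow \widehat B_{m+1}\le B_{m+1}$ is the standard inclusion, represents an element of $\widehat{bV}\cap bP$ mapping to $[T,\delta,T]$.

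The main obstacle, and the step requiring the most care, is the \emph{well-definedness} verification — specifically handling the $k=n$ expansion, i.e., an expansion at the last (unbraided) leaf. One must confirm that bifurcating a trivial parallel strand and then deleting one of the resulting two trivial strands leaves the image in $bP$ genuinely unchanged, rather than changing the underlying tree in a way not matched by any expansion in $bP$; this is where the precise bookkeeping of which leaf the caret is added to, combined with the fact that purity makes $\rho_n(\beta)$ trivial so the same leaf index is used for $T_-$ and $T_+$, does the work. A secondary subtlety is making the choice of representative with $T_-=T_+$ canonical enough that the map does not depend on it — this follows because any two such representatives are related by expansions, which we have already shown the map respects. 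Everything else (the homomorphism property, surjectivity) is then a routine consequence of the multiplicativity of strand deletion on pure braids.
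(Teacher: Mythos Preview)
Your approach has a genuine gap at exactly the step you flagged as the main obstacle: the map ``delete the last strand'' is \emph{not} well defined under expansion at the last leaf. Concretely, start with $(T,\beta,T)$ where $T$ has $n$ leaves and $\beta\in\widehat{B}_n\cap PB_n$. Expanding at leaf $n$ produces $(T',\beta',T')$ with $T'$ having $n+1$ leaves and $\beta'\in PB_{n+1}$ equal to $\beta$ together with one further unbraided strand. Applying your rule to the expanded triple (delete strand $n+1$, remove the last leaf of $T'$) returns $(T,\beta,T)$ itself, with $\beta\in PB_n$. Applying it to the original triple gives $(U,\bar\beta,U)$ with $\bar\beta\in PB_{n-1}$. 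For these to coincide in $bP$ you would need $(T,\beta,T)$ to be an expansion of $(U,\bar\beta,U)$ at some leaf $j$; but expanding at leaf $j$ bifurcates strand $j$ of $\bar\beta$, and this equals $\beta$ (which is $\bar\beta$ with an \emph{unbraided} extra strand appended) only if strand $j$ of $\bar\beta$ is itself unbraided and the trees match. Neither is guaranteed. For instance take $T=T_1$ and $\beta=s_1^2\in\widehat{B}_3\cap PB_3$: then $U=\wedge$, $\bar\beta=s_1^2\in PB_2$, and no expansion of $[\wedge,s_1^2,\wedge]$ yields $[T_1,s_1^2,T_1]$, since every expansion of the former entangles the new third strand. (There is also a minor inconsistency in your description: you say $U$ is ``the common tree underlying $T_\pm$'', but that tree has $n$ leaves while $\bar\beta\in PB_{n-1}$; presumably you intend $T$ with its last leaf removed, and it is this version that fails as above.)

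The paper's construction avoids this by deleting not just the last strand but \emph{all} strands corresponding to leaves in the right subtree of the root, and taking $T'$ to be the left subtree of the root. The point is that ``the left subtree of the root'' is stable under all expansions: an expansion at a leaf in the right subtree changes only the discarded part and leaves the image literally unchanged, while an expansion at a leaf in the left subtree corresponds exactly to an expansion of the image. Your deletion of a single boundary strand does not enjoy this stability, because an expansion at the last leaf moves the boundary.
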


\begin{proof}
An element of $\widehat{bV}\cap bP$ is of the form $[T,\beta,T]$ for $\beta$ a pure braid in which the rightmost strand does not braid with any of the others. Let $T'$ be the subtree of $T$ whose root is the left child of the root of $T$ (or if $T$ is trivial just take $T'$ to also be trivial). Let $\beta'$ be the (pure) braid obtained from $\beta$ by deleting any strands corresponding to leaves of $T$ that are closer to the right child of the root than the left (so in particular in the non-trivial case this includes the rightmost strand). Intuitively, $[T',\beta',T']$ is obtained by taking just the ``left part'' of $[T,\beta,T]$, from the point of view of the root of $T$. This operation is well defined up to expansions, and yields a well defined homomorphism $[T,\beta,T]\to [T',\beta',T']$ from $\widehat{bV}\cap bP$ to $bP$, which is clearly surjective.
\end{proof}

Finally, let us discuss a ``twisted'' version of $bV$, called the \emph{ribbon braided Thompson group} $rV$. This arises by treating the strands in a strand diagram as ribbons, which are allowed to twist. This first appeared officially in work of Thumann \cite[Subsection~3.5.3]{thumann}, where he proved $rV$ (there denoted $RV$) is of type $\F_\infty$. The idea of using ribbons to represent strands in $bV$ was actually already present in Brin's original paper \cite{brin07}, but without twisting. We will mostly follow the approach from \cite[Example~4.2]{zaremsky_users_guide}, which uses the notion of cloning systems from \cite{witzel18} to provide a framework for elements of $rV$ similar to the one we are using here for $bV$. An element of $rV$ is represented by a triple $(T_-,\beta(m_1,\dots,m_n),T_+)$ where $T_-$ and $T_+$ are trees with $n$ leaves and $\beta(m_1,\dots,m_n)\in B_n \wr \mathbf{Z}$. More precisely, $\beta\in B_n$, $m_1,\dots,m_n\in\mathbf{Z}$, and $B_n\wr \mathbf{Z}$ denotes the wreath product $B_n \ltimes \mathbf{Z}^n$ with the action induced by the standard projection $B_n\to S_n$. (We write our wreath products with the acting group on the left, for convenience. Also, we may sometimes write $\beta(0,\dots,0)$ as $\beta$ and $1_{B_n}(m_1,\dots,m_n)$ as $(m_1,\dots,m_n)$ for the sake of notational elegance.) An \emph{expansion} of this triple is another triple of the form
\[
(T_-',\beta's_k^{m_k}(m_1,\dots,m_{k-1},m_k,m_k,m_{k+1},\dots,m_n),T_+') \text{,}
\]
where $T_+'$ is $T_+$ with a caret added to the $k$th leaf for some $1\le k\le n$, $\beta'$ is $\beta$ with its $k$th ribbon bifurcated into two parallel ribbons and $T_-'$ is $T_-$ with a caret added to the $\rho(\beta)(k)$th leaf. Here $s_k$ is the $k$th standard generator of $B_n$, in the standard presentation
\[
B_n = \left\langle s_1,\dots,s_{n-1} \left| \begin{array}{ll}
     &s_i s_{i+1}s_i = s_{i+1}s_i s_{i+1} \text{ for all } i\text{ and}\\
     &s_i s_j=s_j s_i \text{ for all } i,j \text{ with } |i-j|>1
\end{array}\right. \right\rangle \text{.}
\]
Let us adopt the convention that $s_k$ crosses the $k$th ribbon (counting at the bottom) under the $(k+1)$st ribbon, and a positive single twist of a ribbon involves the left side of the ribbon (looking at the bottom) twisting under the right side. These conventions make the definition of expansion look somewhat natural; see Figure~\ref{fig:twist_expand}.

\begin{figure}[htb]
 \centering
 \begin{tikzpicture}[line width=0.8pt]
  \draw (0,0) -- (1,0)
   (1,0) to [out=-90, in=90, looseness=1] (0,-2);
	\draw[white,line width=4pt]
	 (0,0) to [out=-90, in=90, looseness=1] (1,-2);
	\draw (0,0) to [out=-90, in=90, looseness=1] (1,-2)
	 (0,-2) -- (1,-2);
  \node at (2,-1) {$=$};
  \begin{scope}[xshift=4cm,yshift=1.5cm]
     \draw
      (0,0) -- (1,0)
	  (0,0) to [out=-90, in=90, looseness=1] (-1,-1.5)
	  (1,0) to [out=-90, in=90, looseness=1] (2,-1.5)
	  (0.5,-1) to [out=-180, in=90, looseness=1] (0,-1.5)
	  (0.5,-1) to [out=0, in=90, looseness=1] (1,-1.5);
	 \draw
	  (1,-1.5) to [out=-90, in=90, looseness=1] (-1,-3.5)
		(2,-1.5) to [out=-90, in=90, looseness=1] (0,-3.5);
	 \draw[white,line width=4pt]
	  (-1,-1.5) to [out=-90, in=90, looseness=1] (1,-3.5)
	  (0,-1.5) to [out=-90, in=90, looseness=1] (2,-3.5);
	 \draw[white,line width=10pt]
	  (-.5,-1.5) to [out=-90, in=90, looseness=1] (1.5,-3.5);
	 \draw
	  (-1,-1.5) to [out=-90, in=90, looseness=1] (1,-3.5)
	  (0,-1.5) to [out=-90, in=90, looseness=1] (2,-3.5)
	  (0,-3.5) to [out=-90, in=90, looseness=1] (-1,-4.5)
	  (2,-3.5) to [out=-90, in=90, looseness=1] (1,-4.5);
	 \draw[white,line width=4pt]
	  (-1,-3.5) to [out=-90, in=90, looseness=1] (0,-4.5)
		(1,-3.5) to [out=-90, in=90, looseness=1] (2,-4.5);
	 \draw
	  (-1,-3.5) to [out=-90, in=90, looseness=1] (0,-4.5)
		(1,-3.5) to [out=-90, in=90, looseness=1] (2,-4.5);
	 \draw
      (0,-6) -- (1,-6)
	  (0,-6) to [out=90, in=-90, looseness=1] (-1,-4.5)
	  (1,-6) to [out=90, in=-90, looseness=1] (2,-4.5)
	  (0.5,-5) to [out=-180, in=-90, looseness=1] (0,-4.5)
	  (0.5,-5) to [out=0, in=-90, looseness=1] (1,-4.5);
  \end{scope}
 \end{tikzpicture}
 \caption{Expansion in $rV$. Here we see that $[\cdot,1_{B_1}(1),\cdot] = [\wedge,s_1(1,1),\wedge]$, where $\cdot$ is the trivial tree and $\wedge$ is a single caret.}
 \label{fig:twist_expand}
 \end{figure}
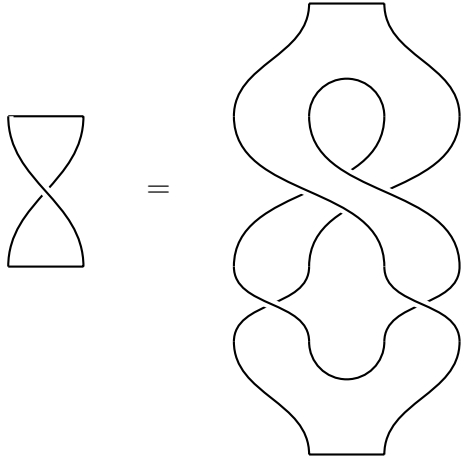

Taking the equivalence relation generated by expansion, we get equivalence classes of the form $[T_-,\beta(m_1,\dots,m_n),T_+]$, which comprise the group $rV$. Just like in $bV$, the group operation is given, roughly, by first expanding until the right tree of the left element equals the left tree of the right element and then canceling these trees. We could consider various subgroups of $rV$ by restricting to pure braids and/or full twists, but for our purposes we will just stick with all braids and all twists. \\

At this point we have
\[
bP < bF < bV < rV \text{,}
\]
where we view $bV$ as the subgroup of $rV$ consisting of elements $[T_-,\beta(0,\dots,0),T_+]$, that is, elements with no twisting. As we have said, $bP$ is normal in $bF$ and $bV$, and in fact it is even normal in $rV$, as we now show:

\begin{lemma}\label{lem:normal}
The subgroup $bP$ is normal in $rV$.
\end{lemma}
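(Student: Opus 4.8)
The plan is to prove this directly in the tree-pair/strand-diagram model, reducing everything to one observation about wreath products: inside $B_n\wr\mathbf{Z}=B_n\ltimes\mathbf{Z}^n$ the set of pure braids with no twisting is a normal subgroup. Indeed, since the $B_n$-action on $\mathbf{Z}^n$ factors through $\rho_n\colon B_n\to S_n$, the assignment $\beta(m_1,\dots,m_n)\mapsto\rho_n(\beta)(m_1,\dots,m_n)$ defines a homomorphism $B_n\wr\mathbf{Z}\to S_n\wr\mathbf{Z}$, and its kernel is exactly $\{\beta(0,\dots,0):\beta\in PB_n\}$. This is also precisely the set of $\delta\in B_n\wr\mathbf{Z}$ for which $[T,\delta,T]$ represents an element of $bP$ (for $T$ with $n$ leaves), under the inclusion $bP<rV$.

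The first point I would verify is that any element of $bP$, written as $[T,\delta,T]$ with $\delta$ a zero-twist pure braid, keeps this shape under expansion: an expansion at a leaf $k$ adds a caret to the $\rho(\beta)(k)=k$-th leaf of \emph{both} trees (as $\beta$ is pure), multiplies by $s_k^{m_k}=s_k^0=1$, keeps the twist vector equal to $(0,\dots,0)$, and bifurcating a strand of a pure braid again yields a pure braid. Iterating, such a $p\in bP$ has a representative $[W,\delta_W,W]$ with $\delta_W$ again a zero-twist pure braid, for \emph{every} tree $W$ obtained from $T$ by adding carets.

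Now, given $p\in bP$ and $g=[U_-,\gamma(m_1,\dots,m_p),U_+]\in rV$, I would take $W$ to be a common expansion of $U_+$ and $T$, expand $g$ so that its right tree becomes $W$, writing $g=[U_-',\delta',W]$, and expand $p$ to $[W,\delta_W,W]$ as above. Since $g^{-1}=[W,(\delta')^{-1},U_-']$, every pair of trees that must be matched to carry out the product is already equal to $W$, so
\[
gpg^{-1}=[U_-',\delta',W]\,[W,\delta_W,W]\,[W,(\delta')^{-1},U_-']=[U_-',\,\delta'\,\delta_W\,(\delta')^{-1},\,U_-']\text{.}
\]
By the opening observation $\delta_W$ lies in the normal subgroup $PB_r\times\{(0,\dots,0)\}$ of $B_r\wr\mathbf{Z}$, where $r$ is the number of leaves of $W$, hence so does $\delta'\delta_W(\delta')^{-1}$; that is, it is a pure braid with no twisting. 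Therefore $gpg^{-1}=[U_-',\delta'\delta_W(\delta')^{-1},U_-']\in bP$.

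This is not hard; the only things that require care are the bookkeeping in passing to the common tree $W$ and the tracking of the twist coordinates through the expansions and the conjugation — the genuine content being that conjugation introduces no net twisting, which the homomorphism $B_n\wr\mathbf{Z}\to S_n\wr\mathbf{Z}$ makes transparent. A more structural alternative would be to note that these homomorphisms assemble into a morphism of cloning systems, so that $rV$ surjects onto a ``ribbon version of $V$'' with kernel $bP$; but I would favour the concrete computation over developing that machinery here.
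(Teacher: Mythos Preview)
Your proof is correct and follows essentially the same approach as the paper: both reduce the question to the normality of $PB_n\times\{(0,\dots,0)\}$ inside $B_n\wr\mathbf{Z}$, then carry this through the tree-pair model by expanding to a common tree. The only cosmetic difference is that you package the key fact as ``kernel of $B_n\wr\mathbf{Z}\to S_n\wr\mathbf{Z}$,'' while the paper simply computes $\beta(m_1,\dots,m_n)\gamma(-m_1,\dots,-m_n)\beta^{-1}=\beta\gamma\beta^{-1}$ directly using that a pure $\gamma$ commutes with the $\mathbf{Z}^n$ factor.
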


\begin{proof}
Let $[U,\gamma,U]\in bP$ and $[T_-,\beta(m_1,\dots,m_n),T_+]\in rV$, expanding so that without loss of generality $U=T_+$. Then we have
\begin{align*}
[T_-,\beta(m_1,\dots,m_n),T_+]&[T_+,\gamma,T_+][T_-,\beta(m_1,\dots,m_n),T_+]^{-1} \\
&= [T_-,\beta(m_1,\dots,m_n),T_+][T_+,\gamma,T_+][T_+,(-m_1,\dots,-m_n)\beta^{-1},T_-] \\
&= [T_-,\beta(m_1,\dots,m_n)\gamma(-m_1,\dots,-m_n)\beta^{-1},T_-] \\
&= [T_-,\beta\gamma\beta^{-1},T_-] \in bP \text{.}
\end{align*}
The last equals sign holds because $\gamma$ is pure and hence $(m_1,\dots,m_n)\gamma=\gamma(m_1,\dots,m_n)$. 
\end{proof}

As we have said, the quotients $bV/bP$ and $bF/bP$ are isomorphic to $V$ and $F$ respectively. The quotient $rV/bP$ is isomorphic to a Thompson-like group constructed analogously to $rV$ but using $S_n \wr \mathbf{Z}$ instead of $B_n \wr \mathbf{Z}$; this could be made more precise by putting a cloning system in the sense of \cite{witzel18} on the family of groups $S_n \wr \mathbf{Z}$, but we will not need to worry about this here. Indeed, all we will need to use $rV/bP$ for later is to relate quasimorphisms of $rV$ to quasimorphisms of $bV$, $bF$, and $bP$, and for this all we need to know about it is the following:

\begin{lemma}\label{lem:rV_mod_bP}
The quotient $rV/bP$ is uniformly perfect.
\end{lemma}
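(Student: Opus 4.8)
The plan is to show that $rV/bP$ is uniformly perfect by exhibiting it as a quotient of a group that is already known to be uniformly perfect, or by directly bounding commutator length using the Thompson-like structure. The cleanest route is to recall that $rV/bP$ is the Thompson-like group built from the cloning system on $S_n \wr \mathbf{Z}$ (as the excerpt explains), and to observe that it surjects onto $V$ with kernel generated by the ``twisting'' elements $[T,(m_1,\dots,m_n),T]$. Since $V$ is uniformly perfect (indeed uniformly simple, \cite{uniformlysimple}), it suffices to control the twisting part and then combine.

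First I would recall the general principle that Thompson-like groups coming from cloning systems on a sequence of groups $(G_n)$ tend to be uniformly perfect whenever the $G_n$ are, because of the ``infinite swindle''/halving trick available in Thompson-like groups: given any element supported on a standard dyadic interval, one can conjugate it by an element moving that interval into a proper subinterval, and the resulting configuration allows one to write high powers (or, after a telescoping argument, the element itself up to a bounded number of commutators). Concretely, in $rV/bP$ every element can be written, after expanding, as $[T_-,\overline{\beta(m_1,\dots,m_n)},T_+]$ where $\overline{\beta}$ is a permutation; the image in $V$ is $[T_-,\overline{\beta},T_+]$, and the ``error'' is a twisting element $[T_+, \overline{(m_1,\dots,m_n)}, T_+] \in \ker(rV/bP \to V)$. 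So a bounded commutator width for $V$ plus a bounded commutator width for the twisting elements gives a bounded commutator width for $rV/bP$.

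For the twisting elements: a single twist $[T,(0,\dots,0,1,0,\dots,0),T]$ supported on one leaf $\ell$ of $T$ can be ``doubled'' via expansion — expanding at $\ell$ turns the twist on one caret into (up to the braid-move equivalences, now carried out in the symmetric group quotient) a configuration supported on the two children, from which, as in the standard Thompson swindle, one sees that any product of such twists is a single commutator, or at least a bounded product of commutators: conjugating a twisting element $z$ supported on an interval $I$ by an element $g\in V$ carrying $I$ to its left half realises $z$ as $g z_1 g^{-1}$ where $z_1$ is ``$z$ on the left half'', and iterating, the standard argument shows $z$ is a product of at most a fixed number of commutators (this is the same mechanism by which the twisting subgroup in, e.g., the lamplighter-type Thompson group is uniformly perfect). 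Alternatively one can cite that $rV$ itself, or $rV/bP$, appears in the literature with this property, but I would prefer to give the self-contained swindle argument since it is short.

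The main obstacle I anticipate is bookkeeping: making the ``error term is a twisting element'' decomposition precise requires care with the semidirect product structure $S_n \ltimes \mathbf{Z}^n$ and with the cloning (expansion) formula, and one must check that the relevant conjugating elements can indeed be taken inside $rV/bP$ (they can — they are images of elements of $V\le rV$, which involve no twisting). Also, strictly speaking one should confirm that the kernel $\ker(rV/bP\to V)$ is contained in, or at least not much larger than, the subgroup generated by twisting elements of the form $[T,(m_1,\dots,m_n),T]$; this follows because any element of $rV/bP$ mapping to the identity of $V$ has, after suitable expansion, trivial permutation part and equal trees. Once these points are nailed down, the final bound $\cl(rV/bP) \le \cl(V) + (\text{bound for twists})$ is immediate, and since both summands are finite, $rV/bP$ is uniformly perfect.
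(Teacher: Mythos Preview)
Your overall plan coincides with the paper's: write an arbitrary element of $rV/bP$ as a product of a $bV$-element (handled by uniform perfection of $V\cong bV/bP$) and a ``pure twist'' $[T,(m_1,\dots,m_n),T]$, and then bound the commutator length of the latter. Two issues deserve comment.

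First, a minor correction: there is no surjection $rV/bP\to V$ given by ``forgetting the twists''. The expansion rule in the $S_n\wr\mathbf{Z}$ cloning system sends $(T,\sigma(\vec m),T)$ to $(T',\sigma'\sigma_k^{m_k}(\dots,m_k,m_k,\dots),T')$, so the permutation part after expansion depends on $m_k$, and the assignment $[T_-,\sigma(\vec m),T_+]\mapsto[T_-,\sigma,T_+]$ is not well defined. What you actually need (and in effect use) is the product decomposition $[T_-,\sigma(\vec m),T_+]=[T_-,\sigma,T_+]\cdot[T_+,(\vec m),T_+]$ inside $rV/bP$, with the first factor lying in the subgroup $V$; that is exactly the paper's first step.

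Second, and more seriously, the ``standard Thompson swindle'' you invoke for the twist part does not go through here. The Mather/Anderson trick writes $z$ as $w\cdot(gwg^{-1})^{-1}$ for an \emph{infinite} product $w=\prod_{k\ge0}g^kzg^{-k}$, and in $rV/bP$ such infinite products of twists do not define group elements: the resulting ``twist function'' on the leaves would fail to be locally constant (equivalently, would not come from any finite tree). So there is no element $w$ to plug into the identity, and the argument stalls precisely at the point where it is supposed to produce a bounded number of commutators. The paper replaces this with a genuinely finitary trick: for $g=[T,(0,m_2,\dots,m_n),T]$ one expands $T$ in two different ways (adding $n-1$ carets on the leftmost leaf, versus one caret on each of leaves $2,\dots,n$), compares the two representatives, and extracts explicit elements $h\in bV$ and $a\in rV$ with $g$ conjugate to $h^{-1}\,[g,a]$. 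Modulo $bP$ the factor $h^{-1}$ lies in $V$, giving $\cl(g)\le N+1$. This step is where the real content is, and your sketch does not supply a working substitute for it.
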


\begin{proof}
Note that $bV/bP \cong V$ is uniformly perfect \cite{uniformlysimple}. Choose $N\in\mathbf{N}$ such that every element of $V$ is a product of at most $N$ commutators. Set $M=(3N+2)$, and we claim that every element of $rV/bP$ is a product of at most $M$ commutators. Let $[T_-,\beta(m_1,\dots,m_n),T_+]\in rV$, and write it as a product of three elements:
\[
[T_-,\beta,T_+][T_+,(m_1,0,\dots,0),T_+][T_+,(0,m_2,\dots,m_n),T_+]\text{.}
\]
Modulo $bP$, we know that this first factor is a product of at most $N$ commutators. The second factor is conjugate to $[T_+,(0,m_1,0,\dots,0),T_+]$, via the conjugator $[T_+,s_1,T_+]$, and this is of the same form as the third factor. Thus it suffices to focus on the third factor, and show that any element of the form $g=[T,(0,m_2,\dots,m_n),T]$ is, modulo $bP$, a product of at most $N+1$ commutators.

Let $T'$ be $T$ with $(n-1)$ new carets added, one after the other, always attaching each new caret to the leftmost leaf. Thus we have $g=[T',(0,\dots,0,m_2,\dots,m_n),T']$, where the number of $0$s is $n$. Let $T''$ be $T$ with $(n-1)$ new carets added, one on each leaf other than the leftmost. Thus we have $g=[T'',\gamma(0,m_2,m_2,m_3,m_3,\dots,m_n,m_n),T'']$, for $\gamma\in B_{2n-1}$ the braid that arises from performing this expansion, namely $\gamma=s_2^{m_2} s_4^{m_3}\cdots s_{2n-2}^{m_n}$. Setting $h=[T'',\gamma,T'']\in bV$ we get $h^{-1}g=[T'',(0,m_2,m_2,m_3,m_3,\dots,m_n,m_n),T'']$. Now let $\alpha\in B_{2n-1}$ be any braid satisfying $\alpha(0,\dots,0,m_2,\dots,m_n)\alpha^{-1}=(0,m_2,0,m_3,\dots,0,m_n,0)$ in $B_{2n-1}\wr\mathbf{Z}$, and set $a=[T'',\alpha,T']$. We get
\begin{align*}
aga^{-1} &= [T'',\alpha,T'][T',(0,\dots,0,m_2,\dots,m_n),T'][T',\alpha^{-1},T'']\\
&= [T'',\alpha(0,\dots,0,m_2,\dots,m_n)\alpha^{-1},T'']\\
&= [T'',(0,m_2,0,m_3,\dots,0,m_n,0),T'']\text{.}
\end{align*}
Hence $h^{-1}gag^{-1}a^{-1} = [T'',(0,0,m_2,0,m_3,\dots,m_{n-1},0,m_n),T'']$. Now using a similar trick as when we conjugated by $a$, this is conjugate to $[T',(0,\dots,0,m_2,\dots,m_n),T']$, which equals $g$. Thus $g$ is conjugate to $h^{-1}gag^{-1}a^{-1}$, and considered modulo $bP$ this is an element of $V$ times a commutator, so we are done.
\end{proof}

It is worth recording the following consequence:

\begin{corollary}\label{cor:rV_perfect}
The group $rV$ is perfect.
\end{corollary}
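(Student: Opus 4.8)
The plan is to combine the two preceding lemmas. By Lemma~\ref{lem:normal}, $bP$ is normal in $rV$, and by Lemma~\ref{lem:rV_mod_bP} the quotient $rV/bP$ is uniformly perfect, hence in particular perfect. The standard fact I would invoke is this: if $N\trianglelefteq G$ with $G/N$ perfect and $N\leq[G,G]$, then $G$ is perfect, because then $[G,G]$ contains $N$ and maps onto $[G/N,G/N]=G/N$, so $[G,G]=G$. Applying this with $G=rV$ and $N=bP$, the whole statement reduces to checking that $bP\leq[rV,rV]$.

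For that last point I would use the inclusions $bP<bV<rV$ together with the fact that $bV$ is perfect \cite{zaremsky18}. Since $[bV,bV]=bV$ contains $bP$, and $[bV,bV]\leq[rV,rV]$, we obtain $bP\leq[rV,rV]$, which finishes the argument.

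I do not expect any genuine obstacle: the corollary is essentially bookkeeping on top of Lemmas~\ref{lem:normal} and~\ref{lem:rV_mod_bP} and the known perfectness of $bV$. If one preferred a self-contained argument not quoting \cite{zaremsky18}, one could instead prove $bP\leq[rV,rV]$ by hand using the usual Thompson-group "room-making" trick: given $[T,\beta,T]$ with $\beta$ pure, pass to an expansion in which $T$ has enough leaves that $\beta$ braids only the leftmost half of the strands, and then exhibit $[T,\beta,T]$ as a product of commutators of elements of $rV$ that permute the two halves of the leaves. But invoking the perfectness of $bV$ is by far the shortest route, and is the step I would actually write down.
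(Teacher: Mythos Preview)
Your argument is correct and matches the paper's proof essentially verbatim: both use that $bV$ is perfect to get $bP\le bV=[bV,bV]\le [rV,rV]$, and then combine this with the (uniform) perfectness of $rV/bP$ from Lemma~\ref{lem:rV_mod_bP} to conclude $rV=[rV,rV]$. The only cosmetic difference is that the paper phrases the final step as ``$rV/rV'$ is a quotient of $rV/bP$'' rather than invoking the general $2$-out-of-$3$ fact you state.
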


\begin{proof}
We already know $bV$ is perfect \cite{zaremsky18}, so the derived subgroup $rV'$ contains $bV$. In particular it contains $bP$, and so $rV/rV'$ is a quotient of $rV/bP$. The latter is perfect by Lemma~\ref{lem:rV_mod_bP}, so we conclude $rV=rV'$.
\end{proof}

\subsection{Algebraic properties of $\widehat{bV}$}

Our proof of Theorem~\ref{thm:main2} will rely on some algebraic properties of $bV$ and $\widehat{bV}$, which are the focus of this subsection.

\begin{definition}[Right depth, $\widehat{bV}(1)$]\label{def:right_depth}
Say that the \emph{right depth} of a tree is the distance from its rightmost leaf to its root. Denote by $\widehat{bV}(1) \leq \widehat{bV}$ the subgroup of elements that admit a representative of the form $(T_-, \beta, T_+)$ such that $T_-$ and $T_+$ both have right depth $1$.
\end{definition}

Note that $\widehat{bV}(1)$ is naturally isomorphic to $bV$. Indeed, we have an isomorphism $bV\to \widehat{bV}(1)$ given by $[T_-,\beta,T_+]\to[U_-,\gamma,U_+]$, where $U_-$ is obtained from $T_-$ by adding a new caret whose left leaf is the root of $T_-$, $U_+$ is obtained from $T_+$ by adding a new caret whose left leaf is the root of $T_+$, and $\gamma$ is obtained from $\beta$ by adding one new unbraided strand on the right. This is also discussed in \cite{brady08}.

\begin{definition}[Homomorphism $\chi_1$, subgroup $\widehat{D}$]\label{def:chi1_Khat}
Let $\chi_1\colon \widehat{bV}\to\mathbf{Z}$ be the homomorphism sending $[T_-,\beta,T_+]$ to the right depth of $T_-$ minus the right depth of $T_+$. Since expansions preserve this measurement, thanks to the rightmost strand of such a $\beta$ not braiding, this is well defined, and is clearly a homomorphism. Denote by $\widehat{D}$ the kernel in $\widehat{bV}$ of $\chi_1$.
\end{definition}

We call this map $\chi_1$ since its restriction to Thompson's group $F\le \widehat{bV}$ coincides with a map usually denoted $\chi_1$. Note that $\widehat{D}$ consists of all $[T_-,\beta,T_+]\in\widehat{bV}$ such that $T_-$ and $T_+$ have the same right depth. In particular $\widehat{D}$ contains $\widehat{bV}(1)$. We will see in Corollary~\ref{cor:ablnz_to_1} that $\widehat{D}$ equals the derived subgroup of $\widehat{bV}$.

Recall the usual first generator $x_0$ of Thompson's group $F$. This is the element $x_0=[T_2,1,T_1]$, where $T_i$ is the tree consisting of a caret with a caret attached to its $i$th leaf, and $1$ is the identity in $B_3$. Note that $\chi_1(x_0)=1$. Also note that $x_0^{-1}=[T_1,1,T_2]$.

\begin{lemma}\label{lem:conjugate}
We have $x_0^{-1}\cdot \widehat{bV}(1)\cdot x_0 \le \widehat{bV}(1)$.
\end{lemma}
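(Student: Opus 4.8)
The plan is to verify the inclusion by a direct computation in the tree-pair model, guided by the following dynamical picture. Viewing an element of $\widehat{bV}$ as a homeomorphism of the Cantor set together with braiding data, $\widehat{bV}(1)$ is exactly the set of elements that fix the rightmost cell — the cone of the point $111\cdots$, which is the cell coming from the right child of the root in a right-depth-$1$ tree — pointwise and without braiding. Since $x_0 \in F$ maps this rightmost cell strictly into itself, conjugating such an element by $x_0$ produces an element that again fixes the rightmost cell pointwise and unbraided, hence lies in $\widehat{bV}(1)$. The work is in making this precise with representative triples.

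So, let $g \in \widehat{bV}(1)$ and fix a representative $(T_-,\beta,T_+)$ with $T_-$ and $T_+$ of right depth $1$; write $T_\pm = S_\pm \vee *$ for the tree whose left subtree is some tree $S_\pm$ and whose right child is a single leaf $*$, and note that, as $g \in \widehat{bV}$, we may take $\beta \in \widehat{B}_n$, so $\rho(\beta)$ fixes the last leaf $n$ and the rightmost strand of $\beta$ is unbraided. With $\wedge = * \vee *$ the single caret, we have $x_0 = [\,* \vee \wedge,\, 1,\, \wedge \vee *\,]$ and $x_0^{-1} = [\,\wedge \vee *,\, 1,\, * \vee \wedge\,]$, and I would compute the product $x_0^{-1} g x_0$ one multiplication at a time. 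To multiply $x_0^{-1}$ by $g$ one aligns the trees $* \vee \wedge$ and $S_- \vee *$ along their common expansion $S_- \vee \wedge$: the required expansion of $g$ is at its rightmost leaf, and here is where $\rho(\beta)(n) = n$ is used, as it forces the matching expansion on $T_+$ also at the rightmost leaf, so only the unbraided strand of $\beta$ gets bifurcated. Carrying this out, together with the corresponding expansion of $x_0^{-1}$ at its left leaf, yields
\[
x_0^{-1} g = \bigl[\, (S_- \vee *) \vee *,\ \beta',\ S_+ \vee \wedge \,\bigr],
\]
where $\beta'$ is $\beta$ with its rightmost strand bifurcated into two parallel unbraided strands. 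An entirely analogous, and slightly shorter, computation of the product with $x_0$ on the right — only $x_0$ needs to be expanded, since $S_+ \vee \wedge$ is already a common expansion of $S_+ \vee \wedge$ and $* \vee \wedge$ — gives
\[
x_0^{-1} g x_0 = \bigl[\, (S_- \vee *) \vee *,\ \beta',\ (S_+ \vee *) \vee * \,\bigr].
\]
Both trees $(S_\pm \vee *) \vee *$ have right depth $1$ and $\beta'$ still has its rightmost strand unbraided, so this representative witnesses $x_0^{-1} g x_0 \in \widehat{bV}(1)$.

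I expect the only real obstacle to be the bookkeeping of the expansions and strand bifurcations, in particular confirming carefully that no braiding is ever introduced on the rightmost strand — which, as indicated above, hinges on the hypothesis $\beta \in \widehat{B}_n$. Everything else is routine manipulation in the tree-pair model, and the end result can be cross-checked against the dynamical description in the first paragraph.
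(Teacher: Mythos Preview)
Your proposal is correct and takes essentially the same approach as the paper: both verify the inclusion by a direct computation showing that conjugating a right-depth-$1$ element by $x_0$ yields a right-depth-$1$ element with the rightmost (in fact the two rightmost) strands unbraided. The paper does this pictorially via strand diagrams with a ``gray box'' standing in for your $S_\pm$ and $\beta$, while you carry out the same expansions and cancellations symbolically in the tree-pair model; the content is identical.
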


\begin{proof}
This is clear using strand diagrams; see Figure~\ref{fig:conjugate}. In the figure, we represent an element of $\widehat{bV}(1)$ by drawing the first carets of each tree and the last (unbraided) strand of the braid, and then drawing a gray box to represent the arbitrary remainder of the picture. Now conjugating by $x_0$ and applying some of the equivalence moves from Figure~\ref{fig:strand_moves}, we see that in the resulting strand diagram the trees again have right depth $1$ and the rightmost strand is unbraided (in fact the two rightmost strands are both unbraided).
\end{proof}

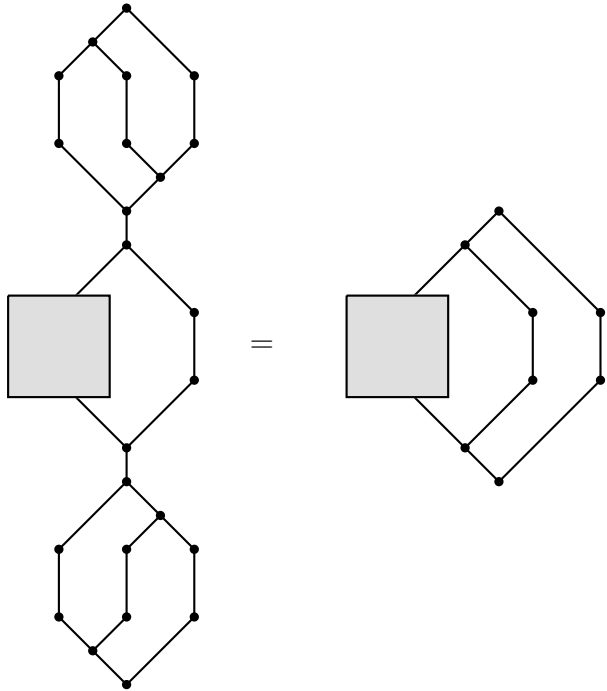
\begin{figure}[htb]
\centering
\begin{tikzpicture}[line width=0.8pt,scale=0.9]

\draw (0,0) -- (1,1) -- (2,0) -- (2,-1) -- (1,-2) -- (0,-1);
\filldraw[lightgray!50] (-0.75,0.25) -- (0.75,0.25) -- (0.75,-1.25) -- (-0.75,-1.25) -- (-0.75,0.25);
\draw (-0.75,0.25) -- (0.75,0.25) -- (0.75,-1.25) -- (-0.75,-1.25) -- (-0.75,0.25)   (1,-2) -- (1,-2.5)   (1,1) -- (1,1.5);
\filldraw (1,1) circle (1.5pt)   (2,0) circle (1.5pt)   (2,-1) circle (1.5pt)   (1,-2) circle (1.5pt);
\node at (3,-0.5) {$=$};

\begin{scope}[yshift=-3.5cm]
\draw (0,0) -- (1,1) -- (2,0) -- (2,-1) -- (1,-2) -- (0,-1) -- (0,0)   (1.5,0.5) -- (1,0) -- (1,-1) -- (0.5,-1.5);
\filldraw (0,0) circle (1.5pt)   (1,1) circle (1.5pt)   (2,0) circle (1.5pt)   (2,-1) circle (1.5pt)   (1,-2) circle (1.5pt)   (0,-1) circle (1.5pt)   (1.5,0.5) circle (1.5pt)   (1,0) circle (1.5pt)   (1,-1) circle (1.5pt)   (0.5,-1.5) circle (1.5pt);
\end{scope}

\begin{scope}[yshift=3.5cm,xshift=2cm,xscale=-1]
\draw (0,0) -- (1,1) -- (2,0) -- (2,-1) -- (1,-2) -- (0,-1) -- (0,0)   (1.5,0.5) -- (1,0) -- (1,-1) -- (0.5,-1.5);
\filldraw (0,0) circle (1.5pt)   (1,1) circle (1.5pt)   (2,0) circle (1.5pt)   (2,-1) circle (1.5pt)   (1,-2) circle (1.5pt)   (0,-1) circle (1.5pt)   (1.5,0.5) circle (1.5pt)   (1,0) circle (1.5pt)   (1,-1) circle (1.5pt)   (0.5,-1.5) circle (1.5pt);
\end{scope}

\begin{scope}[xshift=5cm]
\draw (0,0) -- (1,1) -- (2,0) -- (2,-1) -- (1,-2) -- (0,-1)   (1,1) -- (1.5,1.5) -- (3,0) -- (3,-1) -- (1.5,-2.5) -- (1,-2);
\filldraw[lightgray!50] (-0.75,0.25) -- (0.75,0.25) -- (0.75,-1.25) -- (-0.75,-1.25) -- (-0.75,0.25);
\draw (-0.75,0.25) -- (0.75,0.25) -- (0.75,-1.25) -- (-0.75,-1.25) -- (-0.75,0.25);
\filldraw (1,1) circle (1.5pt)   (2,0) circle (1.5pt)   (2,-1) circle (1.5pt)   (1,-2) circle (1.5pt)   (1.5,1.5) circle (1.5pt)   (3,0) circle (1.5pt)   (3,-1) circle (1.5pt)   (1.5,-2.5) circle (1.5pt);
\end{scope}

\end{tikzpicture}
\caption{The proof of Lemma~\ref{lem:conjugate}: conjugating an element of $\widehat{bV}(1)$ by $x_0$ yields another element of $\widehat{bV}(1)$.}
\label{fig:conjugate}
\end{figure}

\begin{lemma}\label{lem:push_into_1}
For any finite subset $A$ of $\widehat{D}$, there exists $k\ge 0$ such that $x_0^{-k}\cdot A\cdot x_0^k \le \widehat{bV}(1)$.
\end{lemma}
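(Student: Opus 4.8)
The plan is to reduce the statement to a single group element and then push that element into $\widehat{bV}(1)$ by conjugating by a suitable power of $x_0$, keeping track of the right spines of the trees in the strand‑diagram picture.

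First I would record the reduction. It suffices to show that every $g\in\widehat D$ admits some $k_g\ge 0$ with $x_0^{-k_g}\,g\,x_0^{k_g}\in\widehat{bV}(1)$. Granting this, for a finite set $A=\{g_1,\dots,g_m\}\subseteq\widehat D$ put $k=\max_i k_{g_i}$; then for each $i$ we have $x_0^{-k}g_i x_0^{k}=x_0^{-(k-k_{g_i})}\bigl(x_0^{-k_{g_i}}g_i x_0^{k_{g_i}}\bigr)x_0^{k-k_{g_i}}$, which lies in $\widehat{bV}(1)$ by applying Lemma~\ref{lem:conjugate} exactly $k-k_{g_i}$ times, and since $\widehat{bV}(1)$ is a subgroup this gives $x_0^{-k}Ax_0^{k}\le\widehat{bV}(1)$. (What is really being proved is the equality $\widehat D=\bigcup_{k\ge 0}x_0^{k}\,\widehat{bV}(1)\,x_0^{-k}$: the union is increasing by Lemma~\ref{lem:conjugate}, and the inclusion $\supseteq$ is immediate because $\widehat{bV}(1)\le\widehat D$ and $\widehat D=\ker\chi_1$ is normal in $\widehat{bV}$; the content is the inclusion $\subseteq$, which is precisely the single‑element statement.)

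Now fix $g=[T_-,\beta,T_+]\in\widehat D$. Since $g\in\widehat D$ I can choose a representative in which $T_-$ and $T_+$ have the same right depth, say $d$, and since $g\in\widehat{bV}$ I can choose it with the rightmost strand of $\beta$ unbraided; then in the strand diagram of $g$ the sub‑picture hanging off the bottom node of the common length‑$d$ right spine is trivial, i.e.\ a single straight strand. I claim $k_g=d-1$ works; for $d=1$ there is nothing to do, so assume $d\ge 2$. The key computation is the effect of conjugating by $x_0^{-1}=[T_1,1,T_2]$: drawing $x_0^{-1}gx_0$, representing the arbitrary nontrivial part of $g$ by a gray box as in Figure~\ref{fig:conjugate} and simplifying using the moves of Figure~\ref{fig:strand_moves}, one sees that conjugation by $x_0^{-1}$ ``absorbs'' one caret of the right spine into the trivial region, producing a representative that again has trivial rightmost portion and unbraided rightmost strand but now with a right spine of length $d-1$. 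Iterating this $d-1$ times yields a representative of $x_0^{-(d-1)}gx_0^{d-1}$ in which both trees have right depth $1$ and the rightmost strand is unbraided, i.e.\ an element of $\widehat{bV}(1)$ (Definition~\ref{def:right_depth}).

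The hard part is making this last ``engine'' step precise: one must carefully track how conjugation by $x_0$ inserts and deletes carets along the right spine and verify that it interacts with the trivial rightmost region exactly as described, uniformly in the arbitrary gray‑box part of $g$. A clean alternative that avoids large diagrams is to pass to the realisation of $bV$ as homeomorphisms of the Cantor set $\{0,1\}^{\mathbf N}$: the element $x_0^{d-1}$ carries the cone $\{1w\}$ bijectively onto the cone $\{1^{d}w\}$ via $1w\mapsto 1^{d}w$ with no braiding, while $g$ restricts to the identity (with trivial braiding) on $\{1^{d}w\}$, so $x_0^{-(d-1)}gx_0^{d-1}$ restricts to the identity on $\{1w\}$ — which is exactly the defining condition for membership in $\widehat{bV}(1)$.
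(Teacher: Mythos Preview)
Your proof is correct and follows essentially the same approach as the paper: choose $k$ so that every element of $A$ has a representative with right depth at most $k+1$, and observe via the strand-diagram reductions (the paper's Figure~\ref{fig:push_into_1}) that conjugation by $x_0^k$ then lands in $\widehat{bV}(1)$. Your version is simply more explicit---you treat one element at a time, take the maximum, and invoke Lemma~\ref{lem:conjugate} to pass from $k_g$ to $k$---whereas the paper handles the uniform $k$ in one stroke and defers the verification to the figure.
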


\begin{proof}
Since $A$ is finite, we can choose $k\ge 0$ such that every element of $A$ can be represented by a triple $(T_-,\beta,T_+)$ in which the right depth of $T_-$ (and thus $T_+$) is at most $k+1$. For any such $(T_-,\beta,T_+)$, it is clear that $x_0^{-k}\cdot [T_-,\beta,T_+]\cdot x_0^k \in \widehat{bV}(1)$. See Figure~\ref{fig:push_into_1} for an example.
\end{proof}

\begin{figure}[htb]
\centering
\begin{tikzpicture}[line width=0.8pt,scale=0.8]

\draw (0,0) -- (1.5,1.5) -- (3,0) -- (3,-1.5) -- (1.5,-3) -- (0,-1.5);
\draw (2,1) -- (1,0)   (2.5,0.5) -- (2,0)   (2,-2.5) -- (1,-1.5)   (2.5,-2) -- (2,-1.5);
\filldraw[lightgray!50] (-0.75,0.25) -- (2.5,0.25) -- (2.5,-1.75) -- (-0.75,-1.75) -- (-0.75,0.25);
\draw (-0.75,0.25) -- (2.5,0.25) -- (2.5,-1.75) -- (-0.75,-1.75) -- (-0.75,0.25)   (1.5,-3) -- (1.5,-3.5)   (1.5,1.5) -- (1.5,2);
\filldraw (1.5,1.5) circle (1.5pt)   (2,1) circle (1.5pt)   (2.5,0.5) circle (1.5pt)   (3,0) circle (1.5pt)   (3,-1.5) circle (1.5pt)   (2.5,-2) circle (1.5pt)   (2,-2.5) circle (1.5pt)   (1.5,-3) circle (1.5pt);

\node at (3.75,-0.75) {$=$};

\begin{scope}[yshift=5cm]
\draw (0,0) -- (1.5,1.5) -- (3,0) -- (3,-1.5) -- (1.5,-3) -- (0,-1.5) -- (0,0);
\draw (1,1) -- (2,0)   (0.5,0.5) -- (1,0)   (2,-2.5) -- (1,-1.5)   (2.5,-2) -- (2,-1.5)   (1,0) -- (1,-1.5)   (2,0) -- (2,-1.5);
\filldraw (1.5,1.5) circle (1.5pt)   (1,1) circle (1.5pt)   (0.5,0.5) circle (1.5pt)   (3,0) circle (1.5pt)   (3,-1.5) circle (1.5pt)   (2.5,-2) circle (1.5pt)   (2,-2.5) circle (1.5pt)   (1.5,-3) circle (1.5pt)   (0,0) circle (1.5pt)   (1,0) circle (1.5pt)   (2,0) circle (1.5pt)   (0,-1.5) circle (1.5pt)   (1,-1.5) circle (1.5pt)   (2,-1.5) circle (1.5pt);
\end{scope}

\begin{scope}[yshift=-5cm,xscale=-1,xshift=-3cm]
\draw (0,0) -- (1.5,1.5) -- (3,0) -- (3,-1.5) -- (1.5,-3) -- (0,-1.5) -- (0,0);
\draw (1,1) -- (2,0)   (0.5,0.5) -- (1,0)   (2,-2.5) -- (1,-1.5)   (2.5,-2) -- (2,-1.5)   (1,0) -- (1,-1.5)   (2,0) -- (2,-1.5);
\filldraw (1.5,1.5) circle (1.5pt)   (1,1) circle (1.5pt)   (0.5,0.5) circle (1.5pt)   (3,0) circle (1.5pt)   (3,-1.5) circle (1.5pt)   (2.5,-2) circle (1.5pt)   (2,-2.5) circle (1.5pt)   (1.5,-3) circle (1.5pt)   (0,0) circle (1.5pt)   (1,0) circle (1.5pt)   (2,0) circle (1.5pt)   (0,-1.5) circle (1.5pt)   (1,-1.5) circle (1.5pt)   (2,-1.5) circle (1.5pt);
\end{scope}

\begin{scope}[xshift=5.5cm]
\draw (0,0) -- (1.5,1.5) -- (3,0) -- (3,-1.5) -- (1.5,-3) -- (0,-1.5);
\draw (1,1) -- (2,0)   (0.5,0.5) -- (1,0)   (1,-2.5) -- (2,-1.5)   (0.5,-2) -- (1,-1.5);
\filldraw[lightgray!50] (-0.75,0.25) -- (2.5,0.25) -- (2.5,-1.75) -- (-0.75,-1.75) -- (-0.75,0.25);
\draw (-0.75,0.25) -- (2.5,0.25) -- (2.5,-1.75) -- (-0.75,-1.75) -- (-0.75,0.25);
\filldraw (1.5,1.5) circle (1.5pt)   (1,1) circle (1.5pt)   (0.5,0.5) circle (1.5pt)   (3,0) circle (1.5pt)   (3,-1.5) circle (1.5pt)   (0.5,-2) circle (1.5pt)   (1,-2.5) circle (1.5pt)   (1.5,-3) circle (1.5pt);
\end{scope}

\end{tikzpicture}
\caption{An example of the proof of Lemma~\ref{lem:push_into_1} (for $A$ having just one element): conjugating an element of $\widehat{D}$ in which the trees have right depth $3$ by $x_0^2$ yields an element of $\widehat{bV}(1)$.}
\label{fig:push_into_1}
\end{figure}
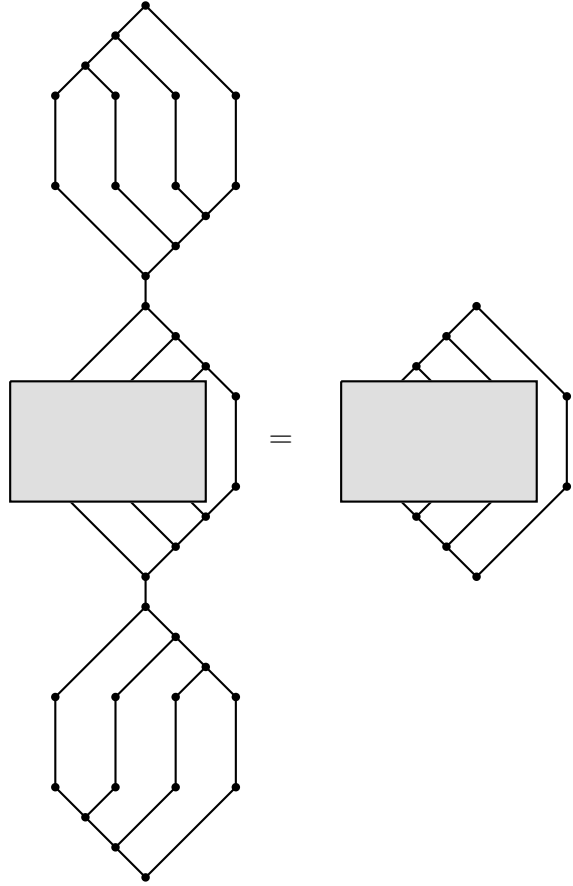

\begin{corollary}\label{cor:hnn}
The group $\widehat{bV}$ is isomorphic to an ascending HNN-extension of $bV$.
\end{corollary}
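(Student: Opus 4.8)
The plan is to realise $\widehat{bV}$ directly as an ascending HNN extension with base group $\widehat{bV}(1)$, which is isomorphic to $bV$. Let $\varphi\colon \widehat{bV}(1)\to\widehat{bV}(1)$ be the endomorphism $\varphi(g)=x_0^{-1}gx_0$; this is well defined by Lemma~\ref{lem:conjugate}, and it is injective since it is the restriction of a conjugation. Let
\[
\Lambda\coloneqq\bigl\langle\, \widehat{bV}(1),\ s \ \bigm|\ s^{-1}gs=\varphi(g)\text{ for all } g\in\widehat{bV}(1)\,\bigr\rangle
\]
be the associated HNN extension, and let $\pi\colon\Lambda\to\widehat{bV}$ be the homomorphism that is the inclusion on $\widehat{bV}(1)$ and sends $s\mapsto x_0$; it respects the defining relations of $\Lambda$ precisely because $x_0^{-1}gx_0=\varphi(g)$ in $\widehat{bV}$. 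I claim $\pi$ is an isomorphism, which yields the corollary since $\Lambda$ is (by construction) an ascending HNN extension of $\widehat{bV}(1)\cong bV$.

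For surjectivity: $\im\pi$ contains $\widehat{bV}(1)$ and $x_0$, hence contains $x_0^k\widehat{bV}(1)x_0^{-k}$ for every $k\ge 0$. By Lemma~\ref{lem:push_into_1} applied to singletons, every $g\in\widehat{D}$ satisfies $x_0^{-k}gx_0^{k}\in\widehat{bV}(1)$ for some $k\ge 0$, i.e.\ $g\in x_0^k\widehat{bV}(1)x_0^{-k}$; thus $\widehat{D}\le\im\pi$. Since also $x_0\in\im\pi$ and $\chi_1(x_0)=1$, we conclude $\im\pi=\widehat{bV}$.

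For injectivity, the idea is to confine $\ker\pi$ to a region of $\Lambda$ on which $\pi$ is transparently injective. There is a canonical homomorphism $\bar\chi\colon\Lambda\to\mathbf{Z}$ with $\bar\chi(s)=1$ and $\bar\chi|_{\widehat{bV}(1)}=0$ (well defined because $\varphi(\widehat{bV}(1))\subseteq\widehat{bV}(1)$), and $\chi_1\circ\pi=\bar\chi$ since both sides send $s\mapsto 1$ and kill $\widehat{bV}(1)\le\widehat{D}=\ker\chi_1$. The standard structure theory of ascending HNN extensions identifies $\ker\bar\chi$ with the ascending union $B\coloneqq\bigcup_{k\ge 0}s^k\widehat{bV}(1)s^{-k}$: this union is a subgroup, it is normalised by $s$ because $s^{-1}\widehat{bV}(1)s=\varphi(\widehat{bV}(1))\subseteq\widehat{bV}(1)$, and $\Lambda/B$ is infinite cyclic generated by the image of $s$. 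Now if $\pi(y)=1$ then $\bar\chi(y)=\chi_1(\pi(y))=0$, so $y\in B$; writing $y=s^k g s^{-k}$ with $g\in\widehat{bV}(1)$ gives $x_0^{k}gx_0^{-k}=\pi(y)=1$, whence $g=1$ and $y=1$. So $\pi$ is injective, and $\widehat{bV}\cong\Lambda$ is an ascending HNN extension of $bV$.

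The only non-formal inputs are Lemmas~\ref{lem:conjugate} and~\ref{lem:push_into_1}, which are already in hand; the rest is bookkeeping. The one step that deserves a little care — and which I regard as the crux — is the identification $\ker\bar\chi=B$ (equivalently, an application of Britton's Lemma for ascending HNN extensions): it is exactly what guarantees that any element of $\Lambda$ killed by $\pi$ is conjugate into the base group, where injectivity of $\pi$ is obvious.
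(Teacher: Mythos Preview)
Your proof is correct and follows essentially the same approach as the paper: both use $\widehat{bV}(1)\cong bV$ as the base, $x_0$ as the stable letter, and rely on Lemmas~\ref{lem:conjugate} and~\ref{lem:push_into_1} for the key inputs. The only difference is packaging: the paper invokes \cite[Lemma~3.1]{GMSW}, which says that a group generated by a subgroup $B$ and an element $t$ with $t^{-1}Bt\le B$ and no nontrivial power of $t$ in $B$ is the ascending HNN extension of $B$ along conjugation by $t$, whereas you unpack this by constructing the HNN extension $\Lambda$ explicitly and verifying that the natural map $\pi\colon\Lambda\to\widehat{bV}$ is an isomorphism via the identification $\ker\bar\chi=\bigcup_{k\ge 0}s^k\widehat{bV}(1)s^{-k}$.
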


\begin{proof}
To get our result, we will verify the conditions in \cite[Lemma~3.1]{GMSW}, using $\widehat{bV}(1)$ (which is isomorphic to $bV$) as the base and $x_0$ as the stable letter. Clearly no non-trivial power of $x_0$ lies in $\widehat{bV}(1)$. Lemma~\ref{lem:conjugate} shows that $x_0^{-1}\cdot \widehat{bV}(1)\cdot x_0 \le \widehat{bV}(1)$. Finally, we need to show that $\widehat{bV}$ is generated by $\widehat{bV}(1)$ and $x_0$. Given $[T_-,\beta,T_+]\in \widehat{bV}$, up to right multiplication by a power of $x_0$ we can assume that $[T_-,\beta,T_+]\in\widehat{D}$, i.e., $T_-$ and $T_+$ have the same right depth. Now Lemma~\ref{lem:push_into_1} says we can conjugate by some power of $x_0$ so that our element lands in $\widehat{bV}(1)$.
\end{proof}

\begin{corollary}\label{cor:ablnz_to_1}
The derived subgroup $\widehat{bV}'$ equals $\widehat{D}$, so the abelianisation of $\widehat{bV}$ is $\mathbf{Z}$, given by the map $\chi_1$.
\end{corollary}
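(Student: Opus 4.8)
The plan is to prove the two inclusions $\widehat{bV}' \subseteq \widehat{D}$ and $\widehat{D} \subseteq \widehat{bV}'$ and then read off the abelianisation. The easy inclusion is immediate: since $\chi_1 \colon \widehat{bV} \to \mathbf{Z}$ is a homomorphism into an abelian group, its kernel $\widehat{D}$ contains $\widehat{bV}'$.

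For the reverse inclusion, I would begin with the fact that $bV$ is perfect \cite{zaremsky18}, and hence so is the isomorphic subgroup $\widehat{bV}(1)$; in particular $\widehat{bV}(1) = \widehat{bV}(1)' \le \widehat{bV}'$. Now take an arbitrary element $g \in \widehat{D}$. Applying Lemma~\ref{lem:push_into_1} to the finite set $A = \{g\}$ yields some $k \ge 0$ with $x_0^{-k} g x_0^k \in \widehat{bV}(1) \le \widehat{bV}'$. Since $\widehat{bV}'$ is a normal subgroup of $\widehat{bV}$, conjugating back by $x_0^k$ keeps the element inside $\widehat{bV}'$, so $g = x_0^k (x_0^{-k} g x_0^k) x_0^{-k} \in \widehat{bV}'$. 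This establishes $\widehat{D} \subseteq \widehat{bV}'$, and therefore $\widehat{bV}' = \widehat{D} = \ker \chi_1$.

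Finally, since $\chi_1(x_0) = 1$, the homomorphism $\chi_1$ is surjective, so it induces an isomorphism $\widehat{bV}/\widehat{bV}' = \widehat{bV}/\widehat{D} \xrightarrow{\ \sim\ } \mathbf{Z}$, which is the claimed description of the abelianisation.

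\textbf{Main obstacle.} There is essentially no serious obstacle here: all the necessary inputs — perfectness of $bV$, the ``conjugate-into-$\widehat{bV}(1)$'' statement of Lemma~\ref{lem:push_into_1}, and normality of the derived subgroup — are already in place. The one idea that makes the argument work is to use the conjugation trick of Lemma~\ref{lem:push_into_1} to reduce a general element of $\widehat{D}$ to one lying in the perfect base group $\widehat{bV}(1)$ of the ascending HNN-extension from Corollary~\ref{cor:hnn}.
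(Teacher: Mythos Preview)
Your proof is correct and follows essentially the same approach as the paper: use that $\widehat{D}=\ker\chi_1$ contains $\widehat{bV}'$, then use Lemma~\ref{lem:push_into_1} together with perfectness of $\widehat{bV}(1)\cong bV$ to conjugate an arbitrary element of $\widehat{D}$ into $\widehat{bV}'$. The only difference is that you spell out a few steps (normality of the derived subgroup, surjectivity of $\chi_1$ via $\chi_1(x_0)=1$) that the paper leaves implicit.
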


\begin{proof}
Since $\widehat{D}$ is the kernel of a map to $\mathbf{Z}$, it contains $\widehat{bV}'$. Conversely, since $\widehat{bV}(1)$ is isomorphic to $bV$, and $bV$ is perfect \cite{zaremsky18}, Lemma~\ref{lem:push_into_1} implies that any element of $\widehat{D}$ is conjugate in $\widehat{bV}$ to an element of a perfect subgroup of $\widehat{bV}$, which shows that every element of $\widehat{D}$ lies in $\widehat{bV}'$. This shows $\widehat{bV}'=\widehat{D}$, and the second statement follows since $\widehat{D}$ is the kernel of $\chi_1$ in $\widehat{bV}$.
\end{proof}

Brin showed in \cite{brin06} that $bV$ and $\widehat{bV}$ are finitely presented. In fact, $bV$ is even of type $\F_\infty$ \cite{bux}. The techniques in \cite{bux} could likely be used to show that $\widehat{bV}$ is also of type $\F_\infty$, but now thanks to Corollary~\ref{cor:hnn} we can prove this much more quickly:

\begin{corollary}\label{cor:F_infty}
The group $\widehat{bV}$ is of type $\F_\infty$.
\end{corollary}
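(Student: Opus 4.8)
The plan is to combine Corollary~\ref{cor:hnn} with two known facts: that $bV$ is of type $\F_\infty$ \cite{bux}, and that an ascending HNN-extension of a group of type $\F_\infty$ is again of type $\F_\infty$. The latter is a standard homological/homotopical finiteness result: if $\widehat{bV}$ is the ascending HNN-extension $bV*_\phi$ with stable letter $x_0$ and injective endomorphism $\phi\colon bV\to bV$ whose image is $x_0^{-1}\cdot\widehat{bV}(1)\cdot x_0\le\widehat{bV}(1)$, then $\widehat{bV}$ acts on the associated Bass--Serre tree, which here is a line (since the extension is ascending, every vertex stabiliser is a conjugate of $bV$ and the tree is a single ray's worth of translates, i.e.\ $\mathbf{R}$ with the $\mathbf{Z}$-action by translation). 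Equivalently one can build a classifying space for $\widehet{bV}$ by gluing a mapping-torus-like construction: take a $K(bV,1)$ with finite skeleta, form the mapping cylinder of the map realising $\phi$, and glue the two ends; iterating and taking a limit yields a $K(\widehat{bV},1)$ with finitely many cells in each dimension.

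Concretely, the steps I would carry out are: (1) Invoke Corollary~\ref{cor:hnn} to write $\widehat{bV}$ as an ascending HNN-extension with base $\widehat{bV}(1)\cong bV$. (2) Cite \cite{bux} for the fact that $bV$ is of type $\F_\infty$. (3) Apply the general principle that an ascending HNN-extension of a group of type $\F_n$ is of type $\F_n$, for all $n$, hence type $\F_\infty$ is inherited; the cleanest reference is the criterion of Brown \cite{brown} via filtered complexes, or one can cite the explicit statement for ascending HNN-extensions in the literature (e.g.\ this is folklore and appears in work on mapping tori and in \cite{GMSW}-adjacent references). (4) Conclude $\widehat{bV}$ is of type $\F_\infty$.

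There is essentially no genuine obstacle here, since all the real work was done in establishing Corollary~\ref{cor:hnn}; the only mild care needed is making sure the cited finiteness-property inheritance statement is phrased for \emph{ascending} (rather than arbitrary) HNN-extensions and with the correct hypotheses (namely that only the base, and not the edge group, need be of type $\F_\infty$ — this is exactly where ascendingness is used, since the edge group $x_0^{-1}\cdot\widehat{bV}(1)\cdot x_0$ is again a copy of $bV$ anyway, so in fact both base and edge groups are of type $\F_\infty$, making the conclusion entirely routine). If one prefers a self-contained argument, the mapping-torus construction in the previous paragraph can be written out: a finite-type $K(bV,1)$ exists by (2), the endomorphism $\phi$ is realised by a cellular map up to homotopy, and the infinite mapping telescope glued up along the $\mathbf{Z}$-action has finite skeleta and the correct fundamental group with contractible universal cover. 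Either way the proof is two or three lines.

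\begin{proof}
By Corollary~\ref{cor:hnn}, $\widehat{bV}$ is an ascending HNN-extension with base group $\widehat{bV}(1)\cong bV$. Since $bV$ is of type $\F_\infty$ \cite{bux}, so is the base group. An ascending HNN-extension of a group of type $\F_\infty$ is again of type $\F_\infty$: this follows from the Bass--Serre tree action (the tree being a line) together with Brown's finiteness criterion \cite{brown}, or more concretely by forming the mapping telescope of a cellular self-map of a finite-type $K(bV,1)$ realising the defining endomorphism and gluing along the $\mathbf{Z}$-action, which produces a $K(\widehat{bV},1)$ with finitely many cells in each dimension. Hence $\widehat{bV}$ is of type $\F_\infty$.
\end{proof}
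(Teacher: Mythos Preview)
Your proof is correct and takes essentially the same approach as the paper: invoke Corollary~\ref{cor:hnn}, cite \cite{bux} for $bV$ being of type $\F_\infty$, and apply the standard fact that ascending HNN-extensions preserve type $\F_n$ (the paper cites \cite[End of Section~2]{BDH} for this). One small inaccuracy: the Bass--Serre tree of a \emph{strictly} ascending HNN-extension is not a line (that only happens when the defining endomorphism is surjective), but this does not affect your argument since vertex and edge stabilisers are all copies of $bV$, and your mapping-telescope description is correct in any case.
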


\begin{proof}
It is a standard fact that an ascending HNN-extension of a group of type $\F_n$ is itself of type $\F_n$ (see e.g., \cite[End of Section 2]{BDH}). Since $bV$ is of type $\F_\infty$ \cite{bux}, Corollary~\ref{cor:hnn} implies that $\widehat{bV}$ is as well.
\end{proof}

The key dynamical feature that will make bounded cohomology vanish is contained in the following lemma.

\begin{lemma}\label{lem:commconjbV}
There exists $g \in \widehat{D}$ such that every element of $\widehat{bV}(1)$ commutes with every element of $g^{-1}\cdot \widehat{bV}(1)\cdot g$.
\end{lemma}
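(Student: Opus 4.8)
The plan is to find a single element $g$ that ``splits'' the Cantor set of ends into two disjoint halves so that $\widehat{bV}(1)$ is supported on one half and $g^{-1}\widehat{bV}(1)g$ is supported on the other, whence the two subgroups commute for the usual ``disjoint support'' reason. Concretely, recall from Definition~\ref{def:right_depth} that $\widehat{bV}(1)$ consists of elements admitting a representative $(T_-,\beta,T_+)$ with both trees of right depth $1$; picturing such an element as a strand diagram, the action only involves the subtree hanging off the left child of the root, while the right child of the root (a single leaf, carrying a single unbraided strand) is fixed pointwise, together with the entire portion of the Cantor set ``below'' it. So $\widehat{bV}(1)$ fixes pointwise the clopen piece $C_R$ of the Cantor set corresponding to the address $1$ (go right once), and is supported on the complementary clopen piece $C_L$ (address $0$).

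Now I want to choose $g\in\widehat{D}$ so that $g$ carries $C_L$ into $C_R$; then $g^{-1}\widehat{bV}(1)g$ is supported on $g^{-1}(C_L)\subseteq C_R$, while $\widehat{bV}(1)$ is supported on $C_L$, and since $g^{-1}(C_L)$ and $C_L$ are disjoint the two groups of homeomorphisms commute (here one also uses that braiding/twisting data supported on disjoint clopen pieces commute, which is visible from the strand-diagram calculus of Figure~\ref{fig:strand_moves}). The natural candidate: let $g$ be (an element of $\widehat{bV}$ represented by) a tree pair $(T_-,1,T_+)$ lying in Thompson's group $F\le\widehat{bV}$ — so no braiding at all — which sends the basic clopen set with address $0$ to the basic clopen set with address $10$. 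Such an element exists in $F$: e.g.\ the tree-pair prefix-replacement taking $0\mapsto 10$, $10\mapsto 110$, $11\mapsto 111$ (using the standard dyadic picture of $F$). This $g$ has equal right depth on both sides ($T_-$ and $T_+$ each have right depth $1$ after one reads off the addresses carefully — or one adjusts the example so that it does), hence $g\in\widehat{D}$, and it does not braid the rightmost strand so it genuinely lies in $\widehat{bV}$. One then checks $g^{-1}(C_L)\subseteq C_R$ directly from the prefix rule: $g$ maps $C_L = C_0$ onto $C_{10}\subseteq C_1 = C_R$, so $g^{-1}(C_0)\subseteq C_0$... wait — I need the direction so that $g^{-1}\widehat{bV}(1)g$ lands on the right, i.e.\ I want $g(C_L)\subseteq C_R$, equivalently $g^{-1}$ of the support of $\widehat{bV}(1)$ lands in $C_R$; so I should instead take $g$ with $g(C_0)\subseteq C_1$, e.g.\ the prefix rule $0\mapsto 10$, hence $g(C_0)=C_{10}\subseteq C_1=C_R$, and then $g^{-1}\widehat{bV}(1)g$ is supported on $g^{-1}(C_0)$; to make that disjoint from $C_0$ I instead want $g$ such that $g^{-1}(C_0)\subseteq C_1$, i.e.\ $g(C_1)\supseteq$ something in $C_0$ — cleanest is to simply take $g$ with $g^{-1}(C_L)\cap C_L=\emptyset$, achieved by any $g\in F\cap\widehat{D}$ mapping $C_0$ strictly inside $C_1$; then ${\rm supp}(g^{-1}\widehat{bV}(1)g)=g^{-1}({\rm supp}\,\widehat{bV}(1))\subseteq g^{-1}(\overline{C_0})$ and one picks $g$ so this lies in $C_1$, disjoint from $C_0\supseteq{\rm supp}\,\widehat{bV}(1)$.

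The main obstacle, and the only place care is needed, is bookkeeping on two fronts simultaneously: (i) ensuring the chosen $g$ actually lies in $\widehat{bV}$ (rightmost strand unbraided — automatic if $g\in F$) \emph{and} in $\widehat{D}$ (equal right depths — this pins down which concrete tree pair to use), and (ii) verifying that ``disjoint clopen support'' really forces commutation in $\widehat{bV}$ and not just in the homeomorphism group, i.e.\ that the braid/twist data doesn't secretly interact. Point (ii) is handled by the strand-diagram moves: if one element's strand diagram is entirely to the left of a fixed leaf and the other's is entirely to the right (after a common expansion), stacking them in either order yields isotopic diagrams, since the braids act on disjoint blocks of strands — this is exactly the planar picture underlying Figure~\ref{fig:strand_moves}. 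So the proof will: (1) describe the clopen decomposition $C=C_L\sqcup C_R$ fixed by the right-depth-$1$ condition; (2) exhibit the explicit $g\in F\cap\widehat{D}$ pushing $C_L$ off itself; (3) invoke the strand-diagram calculus to conclude commutation. I expect (2)'s explicit tree-pair verification to be the fiddliest step, but it is entirely routine.
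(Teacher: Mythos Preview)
Your overall strategy---conjugate $\widehat{bV}(1)$ so that its ``support'' becomes disjoint from that of $\widehat{bV}(1)$, then invoke that braids on disjoint blocks of strands commute---is exactly the paper's approach. The gap is in your choice of conjugator.

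You insist on taking $g\in F\cap\widehat{D}$, i.e.\ a tree pair with trivial braid. This cannot work. Elements of $F$ act on the Cantor set (equivalently on $[0,1]$) by order-preserving homeomorphisms fixing the endpoints; in particular the leftmost point $000\cdots$ is fixed by every $g\in F$, so $g^{-1}(C_0)\cap C_0$ always contains this point and is never empty. Your explicit ``prefix-replacement taking $0\mapsto 10$, $10\mapsto 110$, $11\mapsto 111$'' is not a well-defined element of $F$ (or even of $V$): the target addresses $10,110,111$ do not partition the Cantor set, since nothing maps onto the cone at address $0$. So there is no $F$-element doing what you want, and your parenthetical ``or one adjusts the example so that it does'' cannot be carried out.

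The fix is to allow a nontrivial braid. The paper takes $g=[T_2,s_1,T_2]\in\widehat{D}$, where $T_2$ is a caret with a caret on its right leaf and $s_1\in B_3$ crosses the first two strands; this lies in $\widehat{D}$ since the trees agree (so $\chi_1(g)=0$) and the third strand is unbraided. At the level of $V$ this swaps the cones $C_0$ and $C_{10}$, which is exactly what your argument needs but cannot get from $F$. A short strand-diagram computation then shows $g^{-1}\widehat{bV}(1)g$ consists of elements whose trees have \emph{left} depth $1$ with the leftmost strand unbraided, and your step (ii) (braids on disjoint adjacent blocks commute) finishes the proof. So once you replace your $F$-element by this braid element, the rest of your outline goes through and coincides with the paper's argument.
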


\begin{proof}
We define $g = [T_2,s_1,T_2]$, where as before $T_2$ is a caret with a second caret hanging on the right, and $s_1$ is the first standard generator of $B_3$, i.e., the element braiding the first two strands with a single twist. Since $\beta$ does not braid the rightmost strand we have $g \in\widehat{bV}$, and since clearly $\chi_1([T_2,\beta,T_2])=0$ we have $g\in\widehat{D}$. We see in Figure~\ref{fig:comm_conj} that in any element of $g^{-1} \cdot \widehat{bV}(1) \cdot g$, the trees both have ``left depth'' $1$ and the first strand does not braid with anything. Since elements of $\widehat{bV}(1)$ and $g^{-1} \cdot \widehat{bV}(1) \cdot g$ therefore braid disjoint sets of strands, they commute.
\end{proof}

\begin{figure}[htb]
\centering
\begin{tikzpicture}[line width=0.8pt]

\draw (0,0) -- (1,1) -- (2,0) -- (2,-1) -- (1,-2) -- (0,-1);
\filldraw[lightgray!50] (-0.75,0.25) -- (0.75,0.25) -- (0.75,-1.25) -- (-0.75,-1.25) -- (-0.75,0.25);
\draw (-0.75,0.25) -- (0.75,0.25) -- (0.75,-1.25) -- (-0.75,-1.25) -- (-0.75,0.25)   (1,-2) -- (1,-2.5)   (1,1) -- (1,1.5);
\filldraw (1,1) circle (1.5pt)   (2,0) circle (1.5pt)   (2,-1) circle (1.5pt)   (1,-2) circle (1.5pt);
\node at (3,-0.5) {$=$};

\begin{scope}[yshift=-3.5cm]
\draw (0,0) -- (1,1) -- (2,0) -- (2,-1) -- (1,-2) -- (0,-1)   (1.5,0.5) -- (1,0)   (1,-1) -- (1.5,-1.5);
\draw (1,0) to[out=-90,in=90] (0,-1);
\draw[white,line width=4pt] (0,0) to[out=-90,in=90] (1,-1);
\draw (0,0) to[out=-90,in=90] (1,-1);
\filldraw (0,0) circle (1.5pt)   (1,1) circle (1.5pt)   (2,0) circle (1.5pt)   (2,-1) circle (1.5pt)   (1,-2) circle (1.5pt)   (0,-1) circle (1.5pt)   (1.5,0.5) circle (1.5pt)   (1,0) circle (1.5pt)   (1,-1) circle (1.5pt)   (1.5,-1.5) circle (1.5pt);
\end{scope}

\begin{scope}[yshift=3.5cm]
\draw (0,0) -- (1,1) -- (2,0) -- (2,-1) -- (1,-2) -- (0,-1)   (1.5,0.5) -- (1,0)   (1,-1) -- (1.5,-1.5);
\draw (0,0) to[out=-90,in=90] (1,-1);
\draw[white,line width=4pt] (1,0) to[out=-90,in=90] (0,-1);
\draw (1,0) to[out=-90,in=90] (0,-1);
\filldraw (0,0) circle (1.5pt)   (1,1) circle (1.5pt)   (2,0) circle (1.5pt)   (2,-1) circle (1.5pt)   (1,-2) circle (1.5pt)   (0,-1) circle (1.5pt)   (1.5,0.5) circle (1.5pt)   (1,0) circle (1.5pt)   (1,-1) circle (1.5pt)   (1.5,-1.5) circle (1.5pt);
\end{scope}

\begin{scope}[xshift=7cm,xscale=-1]
\draw (0,-1) -- (0,0) -- (1,1) -- (2,0) -- (2,-1) -- (1,-2) -- (0,-1)   (1,1) -- (1.5,1.5) -- (3,0) -- (3,-1) -- (1.5,-2.5) -- (1,-2);
\begin{scope}[xshift=1.5cm]\filldraw[lightgray!50] (-0.75,0.25) -- (0.75,0.25) -- (0.75,-1.25) -- (-0.75,-1.25) -- (-0.75,0.25);
\draw (-0.75,0.25) -- (0.75,0.25) -- (0.75,-1.25) -- (-0.75,-1.25) -- (-0.75,0.25);\end{scope}
\filldraw (1,1) circle (1.5pt)   (0,0) circle (1.5pt)   (0,-1) circle (1.5pt)   (1,-2) circle (1.5pt)   (1.5,1.5) circle (1.5pt)   (3,0) circle (1.5pt)   (3,-1) circle (1.5pt)   (1.5,-2.5) circle (1.5pt);
\end{scope}

\end{tikzpicture}
\caption{An arbitrary conjugate of an element of $\widehat{bV}(1)$ by $g$. We see that it will commute with any element of $\widehat{bV}(1)$.}
\label{fig:comm_conj}
\end{figure}
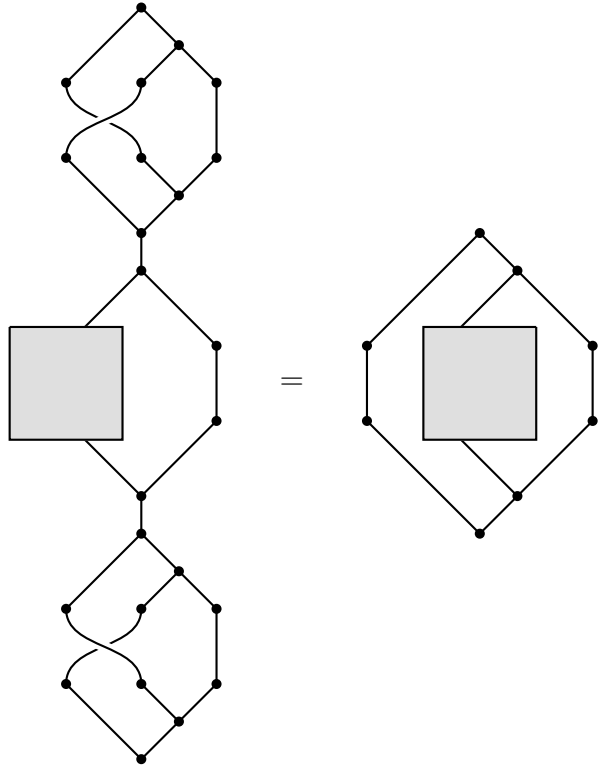

\section{Second bounded cohomology}

We will work only with bounded cohomology with trivial real coefficients, and use the definition in terms of the bar resolution. We refer the reader to \cite{brown} and \cite{frigerio} for a general and complete treatment of ordinary and bounded cohomology of discrete groups, respectively. For the more general setting of locally compact groups, we refer the reader to \cite{monod}. \\

For every $n \geq 0$, denote by $\CC^n(\Gamma)$ the set of real-valued functions on $\Gamma^n$. By convention, $\Gamma^0$ is a single point, so $\CC^0(\Gamma) \cong \mathbf{R}$ consists only of constant functions. We define differential operators $\delta^\bullet : \CC^\bullet(\Gamma) \to \CC^{\bullet+1}(\Gamma)$ as follows:
$\delta^0 = 0$
and for $n \geq 1$:
\begin{equation*}
\begin{split}
    \delta^n(f)(g_1, \ldots, g_{n+1}) & = f (g_2, \ldots, g_{n+1}) \\
    & + \sum\limits_{i = 1}^n (-1)^i f(g_1, \ldots, g_i g_{i+1}, \ldots, g_{n+1}) \\
    & + (-1)^{n+1} f(g_1, \ldots, g_n).
\end{split}
\end{equation*}

One can check that $\delta^{\bullet+1} \delta^{\bullet} = 0$, so $(\CC^\bullet(\Gamma), \delta^\bullet)$ is a cochain complex. We denote by $\ZZ^\bullet(\Gamma) \coloneqq \ker(\delta^\bullet)$ the set of \emph{cocycles}, and by $\BB^\bullet(\Gamma) \coloneqq \im(\delta^{\bullet-1})$ the set of \emph{coboundaries}. The quotient $\HH^\bullet(\Gamma) \coloneqq \ZZ^\bullet(\Gamma) / \BB^\bullet(\Gamma)$ is the \emph{cohomology of $\Gamma$ with trivial real coefficients}. We will also call this the \emph{ordinary cohomology} to make a clear distinction from the bounded one, which we proceed to define. \\

Restricting to functions $f :\Gamma^\bullet \to \mathbf{R}$ that are bounded, meaning that their supremum $\| f \|_\infty$ is finite, leads to a subcomplex $(\CC^\bullet_b(\Gamma), \delta^\bullet)$. We denote by $\ZZ^\bullet_b(\Gamma)$ the \emph{bounded cocycles}, and by $\BB^\bullet_b(\Gamma)$ the \emph{bounded coboundaries}. The vector space $\HH^\bullet_b(\Gamma) \coloneqq \ZZ^\bullet_b(\Gamma) / \BB^\bullet_b(\Gamma)$ is the \emph{bounded cohomology of $\Gamma$} with trivial real coefficients.

The inclusion of the bounded cochain complex into the ordinary one induces a linear map at the level of cohomology, called the \emph{comparison map}:
$$c^\bullet : \HH^\bullet_b(\Gamma) \to \HH^\bullet(\Gamma).$$
This map is in general neither injective nor surjective. In degree $2$, the kernel admits a description in terms of quasimorphisms:

\begin{proposition}[{\cite[Theorem 2.50]{calegari}}]
\label{prop:hbqm}
Let $Q(\Gamma)$ denote the space of quasimorphisms on $\Gamma$ up to bounded distance, and $\ZZ^1(\Gamma)$ the space of homomorphisms $\Gamma \to \mathbf{R}$. Then the sequence
$$0 \to \ZZ^1(\Gamma) \to Q(\Gamma) \xrightarrow{[\delta^1 (-)]} \HH^2_b(\Gamma) \xrightarrow{c^2} \HH^2(\Gamma)$$
is exact. In particular, $c^2$ is injective if and only if every quasimorphism on $\Gamma$ is at a bounded distance from a homomorphism.
\end{proposition}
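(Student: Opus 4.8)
The plan is to prove exactness at each of the three spots by a direct diagram chase in the bar complex, using repeatedly the single observation that a cochain $f\in\CC^1(\Gamma)$ is a quasimorphism if and only if $\delta^1 f\in\CC^2_b(\Gamma)$, and in that case $D(f)=\|\delta^1 f\|_\infty$.

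First I would check that the maps are well defined. The map $\ZZ^1(\Gamma)\to Q(\Gamma)$ is the inclusion, since a homomorphism is a quasimorphism of defect $0$; it is injective because a bounded homomorphism $\Gamma\to\mathbf{R}$ vanishes (evaluate on powers: $|n\phi(g)|=|\phi(g^n)|\le\|\phi\|_\infty$ forces $\phi(g)=0$). For $q$ a quasimorphism, $\delta^1 q(g_1,g_2)=q(g_1)-q(g_1g_2)+q(g_2)$ has $\|\delta^1 q\|_\infty=D(q)<\infty$, and it is automatically a cocycle since $\delta^2\delta^1=0$; thus $\delta^1 q\in\ZZ^2_b(\Gamma)$. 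Replacing $q$ by $q+b$ with $b$ bounded changes $\delta^1 q$ by $\delta^1 b\in\BB^2_b(\Gamma)$, so $[\delta^1 q]\in\HH^2_b(\Gamma)$ depends only on the class of $q$ in $Q(\Gamma)$, and the map $[\delta^1(-)]$ is well defined.

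Next, exactness at $Q(\Gamma)$. If $[\delta^1 q]=0$ in $\HH^2_b(\Gamma)$ then $\delta^1 q=\delta^1 b$ for some $b\in\CC^1_b(\Gamma)$, whence $\delta^1(q-b)=0$, i.e. $q-b\in\ZZ^1(\Gamma)$; so $q$ and $q-b$ represent the same class in $Q(\Gamma)$, which therefore lies in the image of $\ZZ^1(\Gamma)$. The reverse inclusion is immediate since $\delta^1$ kills homomorphisms. Then, exactness at $\HH^2_b(\Gamma)$, which is the only substantive point: the image of $[\delta^1(-)]$ lies in $\ker c^2$ because $\delta^1 q$ is an ordinary coboundary (as $q\in\CC^1(\Gamma)$), so $c^2[\delta^1 q]=0$ in $\HH^2(\Gamma)$; conversely, if $z\in\ZZ^2_b(\Gamma)$ has $c^2[z]=0$, choose $f\in\CC^1(\Gamma)$ with $\delta^1 f=z$, and observe that $\delta^1 f=z$ is bounded, so $f$ is a quasimorphism and $[z]=[\delta^1 f]$ lies in the image of $[\delta^1(-)]$. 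Finally the ``in particular'' clause falls out: $c^2$ is injective iff $\ker c^2=0$ iff (by the just-proved exactness at $\HH^2_b$) the image of $[\delta^1(-)]$ is $0$ iff (by exactness at $Q(\Gamma)$) every quasimorphism lies in the image of $\ZZ^1(\Gamma)$, i.e. is at bounded distance from a homomorphism.

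I do not expect a genuine obstacle here; the whole argument is a diagram chase. The one step requiring a moment's care is the key move in the argument for exactness at $\HH^2_b(\Gamma)$ — recognising that when an ordinary coboundary $z=\delta^1 f$ happens to be bounded, the primitive $f$ is automatically a quasimorphism — but this is simply the definition of the defect, since $D(f)=\sup_{g,h}|f(g)+f(h)-f(gh)|=\|\delta^1 f\|_\infty=\|z\|_\infty<\infty$.
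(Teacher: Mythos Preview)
Your argument is correct and is the standard diagram chase for this fact. Note that the paper does not actually prove this proposition: it is stated with a citation to \cite[Theorem~2.50]{calegari} and no proof is given in the paper itself, so there is nothing to compare against beyond saying that your write-up matches the usual textbook proof.
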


While many applications of bounded cohomology in geometric group theory, e.g., the study of stable commutator length, are only concerned with quasimorphisms, in different settings the full knowledge of $\HH^2_b$ is of interest. Notable instances include the classification of circle actions \cite{ghys}, and the construction of manifolds with prescribed simplicial volume \cite{spectrum, spectrum2}. \\

In order to prove Theorem~\ref{thm:main2}, that $\widehat{bV}$ has vanishing second bounded cohomology, it is enough to prove this for the subgroup $\widehat{D}$ (from Definition~\ref{def:chi1_Khat}), thanks to the following fact:

\begin{proposition}[{Monod \cite[8.6]{monod}; see also \cite{coamenable}}]\label{prop:amenableext}
Let $n \ge 0$. Let $\Gamma$ be a group and $N$ a normal subgroup such that $\Gamma/N$ is amenable. Then the inclusion $N \to \Gamma$ induces an injection in bounded cohomology $\HH^n_b(\Gamma) \to \HH^n_b(N)$. In particular, if $\HH^n_b(N)=0$ then $\HH^n_b(\Gamma)=0$.
\end{proposition}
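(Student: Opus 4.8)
The plan is to prove this via a transfer (averaging) argument driven by an invariant mean on $Q:=\Gamma/N$. First I would recall that bounded cohomology can be computed from the homogeneous bar resolution, $\HH^\bullet_b(\Gamma)\cong H^\bullet\big(\ell^\infty(\Gamma^{\bullet+1})^\Gamma,\delta\big)$, where $\Gamma$ acts diagonally on $\Gamma^{\bullet+1}$, the superscript denotes bounded $\Gamma$-invariant functions, and $\delta$ is the usual alternating-sum differential. The key preliminary point is that the \emph{same} complex $\ell^\infty(\Gamma^{\bullet+1})$, regarded only as a complex of $N$-modules, is still a strong resolution of $\mathbf{R}$ by relatively injective $N$-modules, so that taking $N$-invariants computes $\HH^\bullet_b(N)$. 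Strongness is witnessed by the usual norm-one contracting homotopy $f\mapsto f(e,\,\cdot\,)$, which is $\mathbf{R}$-linear and bounded regardless of equivariance. Relative injectivity over $N$ holds because $\Gamma$ is a free $N$-set: choosing a set-theoretic section $Q\to\Gamma$ gives an $N$-equivariant bijection $\Gamma\cong N\times Q$, and unravelling the diagonal action shows $\ell^\infty(\Gamma^{n+1})$ is isomorphic as an $N$-module to $\ell^\infty(N,W)$ for a suitable Banach space $W$, and modules of this shape are relatively injective over $N$. By the fundamental lemma of homological algebra (the identity chain map $\ell^\infty(\Gamma^{\bullet+1})\to\ell^\infty(\Gamma^{\bullet+1})$ lies over $\mathrm{id}_{\mathbf{R}}$ and is $N$-equivariant), the map on $\HH^\bullet_b$ induced by the inclusion $\iota:N\hookrightarrow\Gamma$ is identified with the map on cohomology induced by the inclusion of cochain complexes $\ell^\infty(\Gamma^{\bullet+1})^\Gamma\hookrightarrow\ell^\infty(\Gamma^{\bullet+1})^N$.

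Next I would construct the transfer. Since $N$ is normal in $\Gamma$, the quotient $Q=\Gamma/N$ acts on the complex $\ell^\infty(\Gamma^{\bullet+1})^N$ of $N$-invariant bounded cochains (the residual $\Gamma$-action descends because $N$ acts trivially on its own invariants), and this action commutes with $\delta$. Fix a left-invariant mean $\mathfrak{m}$ on $\ell^\infty(Q)$, which exists since $Q$ is amenable, and for $f\in\ell^\infty(\Gamma^{n+1})^N$ define $T(f)(x):=\mathfrak{m}_q\big[(q\cdot f)(x)\big]$ for $x\in\Gamma^{n+1}$; this makes sense since $q\mapsto(q\cdot f)(x)$ lies in $\ell^\infty(Q)$ with norm at most $\|f\|_\infty$. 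One checks that $T$ is $\mathbf{R}$-linear with $\|T(f)\|_\infty\le\|f\|_\infty$, that $T(f)$ is $Q$-invariant (hence $\Gamma$-invariant) by invariance of $\mathfrak{m}$, that $T$ commutes with $\delta$ because $\delta$ is a finite alternating sum of face maps each commuting with the $Q$-action and $\mathfrak{m}$ is linear, and that $T$ restricts to the identity on $\ell^\infty(\Gamma^{n+1})^\Gamma$ since on such cochains $q\mapsto(q\cdot f)(x)$ is constant and $\mathfrak{m}(1)=1$. Thus $T$ is a cochain map that is a left inverse to $\ell^\infty(\Gamma^{\bullet+1})^\Gamma\hookrightarrow\ell^\infty(\Gamma^{\bullet+1})^N$; passing to cohomology, it induces a left inverse to $\iota^*:\HH^n_b(\Gamma)\to\HH^n_b(N)$, which is therefore injective for every $n\ge 0$. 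The final assertion is immediate, since a vector space admitting an injection into $0$ is itself $0$.

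The only non-formal ingredient is the identification in the first paragraph: recognising that an injective $\Gamma$-resolution remains an injective $N$-resolution when $\Gamma/N$ is merely a set (so that $\Gamma$ is a free $N$-set), and that the inclusion-of-invariants map genuinely models the functorial restriction $\iota^*$. If one prefers to avoid the homological-algebra machinery, one can instead run the argument by hand on the inhomogeneous bar complexes, using a section $s:Q\to\Gamma$ to split each $x\in\Gamma^n$ into an $N$-part and a $Q$-part and averaging over the latter; the price is that verifying this explicit map is a chain map and realises $\iota^*$ becomes a fiddlier direct computation. In all cases the analytic input is just the existence of the invariant mean, and since $Q$ is discrete no measurability subtleties arise.
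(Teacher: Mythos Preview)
The paper does not supply its own proof of this proposition; it is quoted as a known result with references to Monod and to the co-amenability literature. Your argument is the standard transfer proof and is correct: the two essential points---that $\ell^\infty(\Gamma^{\bullet+1})$ remains a strong relatively injective resolution of $\mathbf{R}$ over $N$ because $\Gamma$ is a free $N$-set, and that an invariant mean on $Q=\Gamma/N$ yields a chain-level retraction of $\ell^\infty(\Gamma^{\bullet+1})^N$ onto $\ell^\infty(\Gamma^{\bullet+1})^\Gamma$---are exactly the ingredients in Monod's treatment, so your proof matches what the cited references contain.
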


To prove vanishing of $\HH^2_b(\widehat{D})$, we will use the following notion.

\begin{definition}
Let $\Gamma$ be a group. We say that $\Gamma$ has \emph{commuting conjugates} if for every finitely generated subgroup $H \leq \Gamma$ there exists $g \in \Gamma$ such that every element of $H$ commutes with every element of $g^{-1}Hg$.
\end{definition}

\begin{theorem}[\cite{FFL1}]\label{thm:commconj}
If $\Gamma$ is a group with commuting conjugates, then $\HH^2_b(\Gamma) = 0$.
\end{theorem}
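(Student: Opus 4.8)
The plan is to prove that commuting conjugates implies vanishing of $\HH^2_b$ by directly manipulating a bounded $2$-cocycle, following the standard ``mixing'' argument for bounded cohomology. First I would reduce to showing that every homogeneous quasimorphism vanishes and that every class in $\HH^2_b(\Gamma)$ already dies under $c^2$, but since $\HH^2$ can be nonzero in general, this reduction is not enough on its own; instead I would argue directly at the level of the exact sequence from Proposition~\ref{prop:hbqm}, or better, use the Matsumoto--Morita / Mitsumatsu-style criterion that a group is \emph{boundedly acyclic in degree $2$} if it admits a ``commuting conjugate'' structure. Concretely, the cleanest route is: (1) show that $Q(\Gamma)$ contains no unbounded homogeneous quasimorphisms, hence $Q(\Gamma) = \ZZ^1(\Gamma)$; and (2) show that the image of $[\delta^1(-)]$ being all of what $c^2$ kills forces $\HH^2_b(\Gamma) \hookrightarrow \HH^2(\Gamma)$, then upgrade this to genuine vanishing using a separate argument that bounded $2$-cocycles are themselves coboundaries of bounded $1$-cochains.

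For step (1): let $q$ be a homogeneous quasimorphism on $\Gamma$ with defect $D(q)$. Given any $h \in \Gamma$, apply commuting conjugates to the cyclic (hence finitely generated) subgroup $\langle h\rangle$ to get $g$ with $h$ and $g^{-1}hg$ commuting. Homogeneous quasimorphisms are conjugation-invariant and are additive on commuting elements, so for all $n$ we get $2n\,q(h) = q(h^n) + q((g^{-1}hg)^n) = q(h^n (g^{-1}h^n g)) = q(h^n) + n q(g^{-1} h g) = \ldots$; unwinding carefully (using $q(g^{-1}hg)=q(h)$ and the Cauchy-like estimate $|q(ab) - q(a) - q(b)| \le D(q)$ for commuting $a,b$, which actually gives equality in the homogeneous case up to nothing) yields $|q(h)| \le D(q)/n \to 0$, so $q(h) = 0$. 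Hence $q \equiv 0$, so $Q(\Gamma) = \ZZ^1(\Gamma)$ and by Proposition~\ref{prop:hbqm} the comparison map $c^2 : \HH^2_b(\Gamma) \to \HH^2(\Gamma)$ is injective.

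Step (2) is the heart, and the main obstacle: injectivity of $c^2$ is not vanishing, so one must show every bounded $2$-cocycle $\omega$ is a bounded coboundary. Here I would invoke the commuting-conjugates structure more strongly via a ``displacement'' trick: since $\langle \cdot\rangle$ can be enlarged, for any finite tuple of group elements appearing in an argument one finds a single $g$ whose conjugate of the whole subgroup they generate commutes with that subgroup. The standard consequence (this is essentially the content of the cited \cite{FFL1}, generalizing Matsumoto--Morita's argument for $\HH^2_b$ of groups with ``commuting cyclic subgroups'' or the argument that mitotic/binate groups are boundedly acyclic) is that $\Gamma$ is \emph{boundedly acyclic in degree $2$}: one builds, from $g$, an explicit contracting homotopy / zig-zag on bounded cochains showing $[\omega] = 0$. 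The key algebraic input is that for commuting $a, b$ and a bounded cocycle $\omega$, the ``transfer'' $\omega(a, b) = \omega(b,a)$ up to a coboundary term, and iterating the conjugation pushes any cocycle into a corner where it must be exact because it is supported on a subgroup that commutes with a conjugate copy of itself — a configuration that admits no nontrivial bounded cohomology in degree $2$ by a direct Fubini-type averaging. I expect the bookkeeping in this homotopy — ensuring all cochains stay bounded and the telescoping converges — to be the genuinely delicate part; everything else is formal. Since the excerpt explicitly attributes this theorem to \cite{FFL1}, I would ultimately just cite that proof rather than reproduce it, noting only that the verification that $\widehat{D}$ has commuting conjugates (which follows from Lemma~\ref{lem:commconjbV} together with Lemma~\ref{lem:push_into_1}, conjugating a finitely generated subgroup of $\widehat{D}$ into $\widehat{bV}(1)$ by a power of $x_0$ and then applying the element $g$ from Lemma~\ref{lem:commconjbV}) is the part specific to this paper.
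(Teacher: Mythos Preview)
The paper does not prove this theorem at all: it is stated as a citation of \cite{FFL1} and used as a black box in the proof of Theorem~\ref{thm:main2}. Your final sentence --- that you would ``ultimately just cite that proof rather than reproduce it'' --- is therefore exactly what the paper does, and is the correct disposition here.

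That said, since you devoted most of your proposal to sketching an argument, it is worth noting that your Step~(1) does not work as written. From $h$ commuting with $g^{-1}hg$ you correctly get $q\big(h^n(g^{-1}h^n g)\big) = 2n\,q(h)$ for a homogeneous quasimorphism $q$, but you then need a \emph{second}, independent estimate on $q\big(h^n(g^{-1}h^n g)\big)$ to force $q(h)=0$, and none is given: the element $h^n(g^{-1}h^n g)$ is not visibly a commutator, not conjugate to a power of $h$, and your chain of equalities simply loops back to $2n\,q(h)$. The commuting-conjugates hypothesis applied only to cyclic subgroups is genuinely too weak to kill homogeneous quasimorphisms by this kind of manipulation; the actual argument in \cite{FFL1} works directly with bounded $2$-cocycles and exploits the hypothesis for arbitrary finitely generated subgroups, closer in spirit to what you gesture at in Step~(2). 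Your instinct that ``the bookkeeping in this homotopy \ldots\ is the genuinely delicate part'' is right, and that is precisely why the paper outsources it.

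Finally, your last paragraph conflates this theorem with the verification that $\widehat{D}$ has commuting conjugates; that verification belongs to the proof of Theorem~\ref{thm:main2}, not to the proof of the cited result, and the paper keeps the two cleanly separated.
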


We are now ready to prove Theorem \ref{thm:main2}.

\begin{proof}[Proof of Theorem \ref{thm:main2}]
Since $\widehat{bV}/\widehat{D} \cong \mathbf{Z}$ by definition, using Proposition \ref{prop:amenableext} it suffices to show that $\HH^2_b(\widehat{D}) = 0$. By Theorem \ref{thm:commconj} it suffices to show that $\widehat{D}$ has commuting conjugates. Let $H \leq \widehat{D}$ be a finitely generated subgroup. By Lemma~\ref{lem:push_into_1}, there exists $k \ge 0$ such that $x_0^{-k}\cdot H\cdot x_0^k \leq \widehat{bV}(1)$. Then by Lemma~\ref{lem:commconjbV}, there exists $g \in \widehat{D}$ such that every element of $g^{-1}\cdot x_0^{-k}\cdot H\cdot x_0^k\cdot g$ commutes with every element of $\widehat{bV}(1)$, and so in particular with every element of $x_0^{-k}\cdot H\cdot x_0^k$. Thus, every element of the conjugate of $H$ by $x_0^k \cdot g\cdot x_0^{-k}$ commutes with every element of $H$. Finally, note that $x_0^k \cdot g\cdot x_0^{-k}\in \widehat{D}$ since $g\in\widehat{D}$, $x_0 \in \widehat{bV}$ and $\widehat{D}$ is normal in $\widehat{bV}$. This shows that $\widehat{D}$ has commuting conjugates and concludes the proof.
\end{proof}

\section{Quasimorphisms on $rV$ and $bV$}\label{sec:quasis}

In this section we prove Theorem~\ref{thm:main1}. We will first work with the ribbon braided Thompson group $rV$ and prove that $Q(rV)$ is infinite-dimensional (Proposition~\ref{prop:rV_inf_dim}), and then prove that unbounded quasimorphisms of $rV$ restrict to unbounded quasimorphisms of $bV$, $bF$ and $bP$ (and indeed, any $\Gamma$ satisfying $bP\le \Gamma\le rV$).

First we need to make the connection between $rV$ and $MCG(\mathbf{R}^2\setminus K)$. This was done implicitly in \cite{funar04,aramayona21}, and more explicitly in \cite[Theorem~3.24]{skipperwustab}. In short, $rV$ is isomorphic to a certain subgroup of mapping classes of $S^2\setminus K$, namely those that are ``asymptotically quasi-rigid'' with respect to some ``rigid structure'' involving choices of ``admissible subsurfaces'' and only act on half of $S^2\setminus K$ in some sense; see \cite[Definition~3.7]{skipperwustab} for all the details. We can view this as describing certain mapping classes of $D^2\setminus K$ that do not require the boundary of $D^2$ to be fixed, but rather allow it to be half-twisted. For an example providing the intuition for how to view an element of $bV$ as a mapping class, see Figure~\ref{fig:bV_as_MCG}.

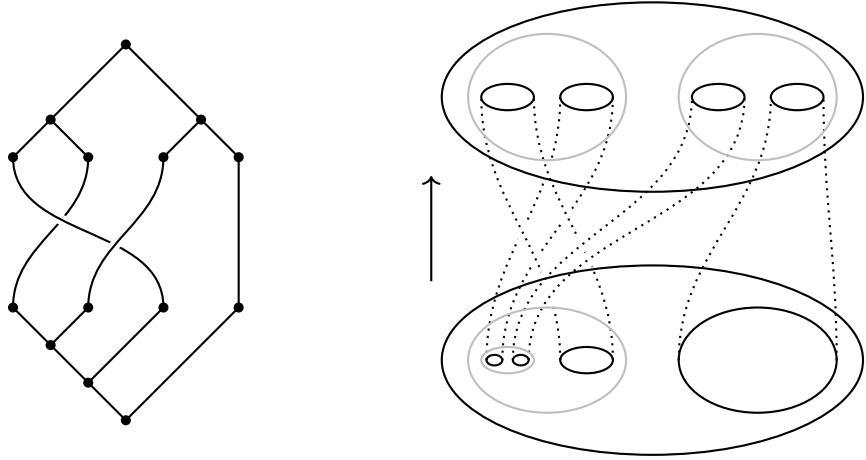
\begin{figure}[htb]
\centering
\begin{tikzpicture}[line width=0.8pt]
\draw (-0.5,-1.5) to [out=-90, in=90] (-1.5,-3.5);
 \draw[line width=4pt, white] (-1.5,-1.5) to [out=-90, in=90] (0.5,-3.5);
 \draw (-1.5,-1.5) to [out=-90, in=90] (0.5,-3.5);
 \draw[line width=4pt, white] (0.5,-1.5) to [out=-90, in=90] (-0.5,-3.5);
 \draw (0.5,-1.5) to [out=-90, in=90] (-0.5,-3.5);
 \draw (1.5,-1.5) -- (1.5,-3.5);
  
 \filldraw (0,0) circle (1.5pt)   (-1.5,-1.5) circle (1.5pt)   (1.5,-1.5) circle (1.5pt)   (-1,-1) circle (1.5pt)   (1,-1) circle (1.5pt)   (-0.5,-1.5) circle (1.5pt)   (0.5,-1.5) circle (1.5pt);
 \draw (-1.5,-1.5) -- (0,0) -- (1.5,-1.5)   (-0.5,-1.5) -- (-1,-1)   (0.5,-1.5) -- (1,-1);
 
 \filldraw (0,-5) circle (1.5pt)   (-1.5,-3.5) circle (1.5pt)   (1.5,-3.5) circle (1.5pt)   (-1,-4) circle (1.5pt)   (-0.5,-4.5) circle (1.5pt)   (-0.5,-3.5) circle (1.5pt)   (0.5,-3.5) circle (1.5pt);;
 \draw (-1.5,-3.5) -- (0,-5) -- (1.5,-3.5)   (-0.5,-3.5) -- (-1,-4)   (0.5,-3.5) -- (-0.5,-4.5);
 
\begin{scope}[xshift=7cm,yshift=-4.2cm,scale=0.7]
\draw[dotted] (-3.15,0) to[out=90,in=-90] (-1.75,5);
\draw[dotted] (-2.85,0) to[out=90,in=-90] (-0.75,5);
\draw[white,line width=0.2cm] (-1.75,0) to[out=90,in=-90] (-3.25,5);
\draw[dotted] (-1.75,0) to[out=90,in=-90] (-3.25,5);
\draw[white,line width=0.2cm] (-0.75,0) to[out=90,in=-90] (-2.25,5);
\draw[dotted] (-0.75,0) to[out=90,in=-90] (-2.25,5);
\draw[white,line width=0.2cm] (-2.65,0) to[out=90,in=-90] (0.75,5);
\draw[dotted] (-2.65,0) to[out=90,in=-90] (0.75,5);
\draw[white,line width=0.2cm] (-2.35,0) to[out=90,in=-90] (1.75,5);
\draw[dotted] (-2.35,0) to[out=90,in=-90] (1.75,5);
\draw[dotted] (0.5,0) to[out=90,in=-90] (2.25,5);
\draw[dotted] (3.5,0) to[out=90,in=-90] (3.25,5);

\draw (0,0) ellipse (4cm and 1.8cm);
\draw[lightgray] (-2,0) ellipse (1.5cm and 1cm);
\draw (2,0) ellipse (1.5cm and 1cm);
\draw[lightgray] (-2.75,0) ellipse (0.5cm and 0.25cm);
\draw (-1.25,0) ellipse (0.5cm and 0.25cm);
\draw (-3,0) ellipse (0.15cm and 0.1cm);
\draw (-2.5,0) ellipse (0.15cm and 0.1cm);

\draw[->] (-4.2,1.5) -- (-4.2,3.5);

\draw (0,5) ellipse (4cm and 1.8cm);
\draw[lightgray] (-2,5) ellipse (1.5cm and 1.2cm);
\draw[lightgray] (2,5) ellipse (1.5cm and 1.2cm);
\draw (-2.75,5) ellipse (0.5cm and 0.25cm);
\draw (-1.25,5) ellipse (0.5cm and 0.25cm);
\draw (2.75,5) ellipse (0.5cm and 0.25cm);
\draw (1.25,5) ellipse (0.5cm and 0.25cm);
\end{scope}

\end{tikzpicture}
\caption{A visualisation of the element $[T_-,\beta,T_+]$ of $bV$ from Figure~\ref{fig:expansion}, as a mapping class on the disk. The bottom (domain) tree $T_+$ describes a decomposition of $D^2$ into the pieces shown, and the top (range) tree $T_-$ describes another such decomposition. The braid $\beta$ then treats the four smallest subdisks in the domain as ``holes,'' with the range viewed similarly, and gives a homeomorphism from the former to the latter, indicated by the dotted lines. This element does not involve twists, but one could picture the holes twisting as well, yielding an element of $rV$.}
\label{fig:bV_as_MCG}
\end{figure}

Now we pass from this picture to $MCG(\mathbf{R}^2\setminus K)$ by viewing $D^2\setminus K$ inside $\mathbf{R}^2\setminus K$, at the expense of modding out the cyclic subgroup generated by a full twist around the boundary of $D^2$. This is represented by the element $[\cdot,1_{B_1}(2),\cdot]$ of $rV$, which generates the center $Z(rV)$, so at this point we have embedded $rV/Z(rV)$ inside $MCG(\mathbf{R}^2\setminus K)$. In particular, we can work in $MCG(\mathbf{R}^2\setminus K)$ to prove that $Q(rV/Z(rV))$ is infinite-dimensional, from which it will immediately follow that $Q(rV)$ is as well. It is also worth mentioning that $bV\cap Z(rV)=\{1\}$, so this provides an explicit embedding of $bV$ into $MCG(\mathbf{R}^2\setminus K)$.

\subsection{Quasimorphisms on $rV$}

The proof that $Q(rV/Z(rV))$ is infinite-dimensional closely follows the proof from \cite{bavard} that $Q(MCG(\mathbf{R}^2 \setminus K))$ is infinite-dimensional. This is Th\'eor\`eme~4.8 of \cite{bavard} (Theorem~4.9 of the English translation \cite{bavard_english}), and we will especially use the constructions from \cite[Subsection~4.1]{bavard}.

In \cite{bavard}, Bavard constructs the so called \emph{ray graph} $X_r$ associated to the surface $\mathbf{R}^2 \setminus K$, and shows that it is hyperbolic. She proceeds to show that the action of $MCG(\mathbf{R}^2 \setminus K)$ on $X_r$ satisfies the hypotheses of Bestvina--Fujiwara's main theorem in \cite{mcgqm}, which implies that $Q(MCG(\mathbf{R}^2 \setminus K))$ is infinite-dimensional. To prove our Proposition~\ref{prop:rV_inf_dim}, we will show that the action of $rV/Z(rV)$ also satisfies these properties, and make reference to \cite[Section 4]{bavard} throughout. \\

We start by reviewing Bavard's proof for $MCG(\mathbf{R}^2 \setminus K)$. By the main theorem of \cite{mcgqm}, it suffices to exhibit elements $h_1, h_2 \in MCG(\mathbf{R}^2 \setminus K)$ with the following properties:
\begin{enumerate}
    \item $h_1$ and $h_2$ are hyperbolic elements for the action of $MCG(\mathbf{R}^2 \setminus K)$ on $X_r$, acting by translation on axes $l_1, l_2$, which are equipped with the orientation of the action of the respective elements.
    \item $h_1$ and $h_2$ are independent, meaning that their fixed point sets in $\partial X_r$ are disjoint.
    \item There exist constants $B, C$ such that for every segment $w$ of $l_2$ longer than $C$, for every $g \in MCG(\mathbf{R}^2 \setminus K)$, if the segment $g \cdot w$ is contained in the $B$-neighbourhood of $l_1$, then it is oriented in the opposite direction.
\end{enumerate}

Identify $K$ with the set $\{0,1\}^{\mathbf{N}}$ of infinite words $\kappa$ in the alphabet $\{0,1\}$, and for each finite word $w\in\{0,1\}^*$ let $K(w)\coloneqq \{w\kappa\mid \kappa\in K\}$ be the \emph{cone} corresponding to $w$. Then define
$$K_0=K(00)\text{, } K_1=K(010)\text{, } K_2=K(0110), \dots, K_\infty=\{0\overline{1}\};$$
$$K_{-1}=K(11)\text{, } K_{-2}=K(101)\text{, } K_{-3}=K(1001), \dots, K_{-\infty}=\{1\overline{0}\}.$$
This provides a partition of $K$ into sets $K_i$ for $-\infty \leq i \leq \infty$, where each $K_i$ for $i \in \mathbf{Z}$ is a clopen set and each $K_{\pm \infty}$ contains one point.

Let us now be more precise about how we would like $K$ to live inside of $\mathbf{R}^2$. Assume that $K$ lies on the horizontal axis $\mathbf{R}$ and is symmetric around $0 \notin K$, and that $K_i \subset \mathbf{R}_{< 0}$ for all $0\le i\le \infty$ and $K_i \subset \mathbf{R}_{> 0}$ for all $-\infty\le i\le -1$. Let $I \subset \mathbf{R}$ be a symmetric open neighbourhood of $0$ that is disjoint from $K$. Finally, let $\mathcal{C}\subseteq \mathbf{R}^2$ be a homeomorphic copy of a circle, formed as the union of a segment in the horizontal axis $\mathbf{R}$ containing all of $K$, and a semicircle in the upper half-plane. Let $\phi$ denote the homeomorphism of $\mathbf{R}^2$ that is a half-turn rotation about the origin, so $\phi$ stabilises $K$, and we will also denote by $\phi$ the mapping class of $\phi$ in $MCG(\mathbf{R}^2\setminus K)$.

\begin{theorem}[{\cite[Th\'eor\`eme~4.8]{bavard}}]
\label{thm:t1}
Let $\widetilde{t}_1$ be any homeomorphism of $\mathbf{R}^2$ that stabilises $\mathcal{C}$, restricts to the identity on $I$ and sends $K_i$ to $K_{i+1}$ for each $i\in\mathbf{Z}$. Let $t_1\in MCG(\mathbf{R}^2 \setminus K)$ be the class of $\widetilde{t}_1$, let $t_2 \coloneqq \phi t_1 \phi^{-1}$, let $h_1 \coloneqq t_1 t_2 t_1$ and let $h_2 \coloneqq \phi h_1^{-1} \phi^{-1}$. Then the elements $h_1$ and $h_2$ satisfy the three properties above, and hence any subgroup of $MCG(\mathbf{R}^2 \setminus K)$ containing $h_1$ and $h_2$ has an infinite-dimensional space of quasimorphisms.
\end{theorem}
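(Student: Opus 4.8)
Since this statement is essentially \cite[Th\'eor\`eme~4.8]{bavard} (equivalently \cite[Theorem~4.9]{bavard_english}), the plan is to recall the skeleton of Bavard's argument, as this is exactly what we will reuse in the next subsection. The ambient space is the \emph{ray graph} $X_r$ of $\mathbf{R}^2\setminus K$, whose vertices are isotopy classes of simple rays emanating from the puncture at infinity and accumulating onto $K$, with edges recording disjointness; Bavard proves that $X_r$ is Gromov-hyperbolic of infinite diameter and that $MCG(\mathbf{R}^2\setminus K)$ acts on it by isometries. Once conditions (1)--(3) are verified for $h_1$ and $h_2$, the desired conclusion is precisely the output of the Bestvina--Fujiwara machine \cite{mcgqm}: that theorem produces an infinite-dimensional space of quasimorphisms for any group acting on a hyperbolic space that contains two elements with these properties, and since (1)--(3) refer only to $h_1$, $h_2$ and the ambient action, they persist in every subgroup of $MCG(\mathbf{R}^2\setminus K)$ containing these two elements.

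For condition (1) I would fix a base ray $L$ running ``below'' the points of $K$ and separating the cones $K_i$ in a controlled, linearly ordered fashion, and then show that $n\mapsto h_1^n\cdot L$ is a quasi-isometric embedding of $\mathbf{Z}$, yielding a quasi-axis $l_1$; the point is that $\widetilde{t}_1$ shifts the indexing $K_i\mapsto K_{i+1}$, so $h_1\cdot L$ is pushed a definite combinatorial distance off $L$, and the nesting of the $K_i$ rules out backtracking. Since $h_2=\phi h_1^{-1}\phi^{-1}$, it is then loxodromic with quasi-axis $l_2=\phi\cdot l_1$ traversed in the reverse direction. For condition (2) I would identify the fixed points of $h_1$ in $\partial X_r$ with the boundary points determined by rays accumulating onto $K_{\pm\infty}$, and those of $h_2$ with rays accumulating onto $\phi(K_{\pm\infty})$; these being distinct points of $K$, the fixed-point sets are disjoint, which a ping-pong argument on $X_r$ makes precise.

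The hard part is condition (3), the anti-alignment statement, which is the technical core of Bavard's proof. The task is to understand, for an arbitrary $g\in MCG(\mathbf{R}^2\setminus K)$, the ways in which a long subsegment $w$ of $l_2$ can be carried into a bounded neighbourhood of $l_1$. The mechanism is that the vertices strung along $l_1$ (respectively $l_2$) are rays that cut $K$ into the pieces $K_i$ in a very rigid linear order, and this order --- together with the auxiliary circle $\mathcal{C}$ and the normalisation that $\widetilde{t}_1$ restrict to the identity on $I$ --- records an orientation. If $g\cdot w$ fellow-travels $l_1$, then $g$ must coarsely intertwine the ordered partition seen along $l_2$ with the one seen along $l_1$; but $l_2$ is the $\phi$-mirror image of $l_1$ run backwards, so the only way this can happen is with $g\cdot w$ pointing opposite to $l_1$. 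Making this rigorous amounts to reproducing Bavard's bookkeeping with rays and the circle $\mathcal{C}$, and is where essentially all of the work lies.

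Finally, with (1)--(3) in hand, Bestvina--Fujiwara's construction of counting quasimorphisms built from powers of a word such as $h_1 h_2$ yields an infinite-dimensional family of quasimorphisms that is already unbounded and linearly independent on $\langle h_1,h_2\rangle$, and the same construction carried out inside any larger group $\Gamma$ with $h_1,h_2\in\Gamma$ gives the asserted infinite-dimensional $Q(\Gamma)$. I expect conditions (1) and (2) to be comparatively routine ping-pong once the quasi-axes $l_1,l_2$ are pinned down, with condition (3) being the genuine obstacle.
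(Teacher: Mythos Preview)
Your plan is appropriate and matches the paper's treatment: the paper does not reprove this theorem at all but simply quotes it from Bavard \cite[Th\'eor\`eme~4.8]{bavard}, and your outline is a faithful sketch of Bavard's argument (hyperbolicity of $X_r$, loxodromicity via the shift $K_i\mapsto K_{i+1}$, independence via distinct accumulation points, and the anti-alignment condition~(3) as the technical core feeding into Bestvina--Fujiwara). There is nothing further to compare, since the paper's ``proof'' is the citation itself.
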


As we have seen, $rV$  maps to $MCG(\mathbf{R}^2\setminus K)$ with kernel $Z(rV)\cong \mathbf{Z}$. Note that the image in $MCG(\mathbf{R}^2\setminus K)$ of the element $[\cdot,1_{B_1}(1),\cdot]\in rV$, which is a single half-twist on one ribbon, is precisely the mapping class $\phi$.

We are now ready to prove that $Q(rV)$ is infinite-dimensional.

\begin{proposition}\label{prop:rV_inf_dim}
The space $Q(rV)$ is infinite-dimensional.
\end{proposition}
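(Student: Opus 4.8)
The plan is to derive Proposition~\ref{prop:rV_inf_dim} directly from Bavard's Theorem~\ref{thm:t1}. As explained above, it is enough to show that $Q(rV/Z(rV))$ is infinite-dimensional, and by Theorem~\ref{thm:t1} this holds as soon as we exhibit elements $h_1,h_2$ of Bavard's prescribed form lying in $rV/Z(rV)$, viewed as a subgroup of $MCG(\mathbf{R}^2\setminus K)$. We already know that $\phi$, being the image of $[\cdot,1_{B_1}(1),\cdot]\in rV$, lies in $rV/Z(rV)$. Since $h_1=t_1t_2t_1$ with $t_2=\phi t_1\phi^{-1}$, and $h_2=\phi h_1^{-1}\phi^{-1}$, the whole problem reduces to producing a single element $t_1\in rV/Z(rV)$ that can be represented by a homeomorphism $\widetilde{t}_1$ of the kind allowed in Theorem~\ref{thm:t1}: stabilising $\mathcal{C}$, equal to the identity on $I$, and sending $K_i$ to $K_{i+1}$ for every $i\in\mathbf{Z}$.

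The crucial observation is that, although $\widetilde{t}_1$ must permute the infinitely many clopen sets $K_i$, it can be chosen so that its action on the end space $K=\{0,1\}^{\mathbf{N}}$ is a single \emph{finite} prefix substitution, which is precisely the sort of map realised by a tree-pair element of $bV\le rV$. Concretely, one checks that the substitution replacing the prefix $0$ by $01$, the prefix $10$ by $1$, and the prefix $11$ by $00$ sends $K(01^i0)=K_i$ to $K(01^{i+1}0)=K_{i+1}$ for all $i\ge 0$, sends $K(10^{i-1}1)=K_{-i}$ to $K_{-i+1}$ for all $i\ge 1$, and fixes the two accumulation points $0\overline{1}$ and $1\overline{0}$; hence it sends $K_i$ to $K_{i+1}$ for every $i\in\mathbf{Z}$. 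This substitution is induced by the element $t_1=[T_1,\beta,T_2]\in bV$, where $T_1$ and $T_2$ are the trees appearing in the definition of $x_0$ and $\beta\in B_3$ is any braid with $\rho(\beta)=(1~2~3)$. Since $bV$ embeds into $MCG(\mathbf{R}^2\setminus K)$ (and no ribbon twist occurs, so $\partial D^2$ stays fixed), passing to a mapping class of $D^2\setminus K$ and then to $MCG(\mathbf{R}^2\setminus K)$ as in the discussion around Figure~\ref{fig:bV_as_MCG} shows that $t_1$ is represented by a homeomorphism $\widetilde{t}_1$ of $\mathbf{R}^2$ which fixes $\mathcal{C}=\partial D^2$ pointwise (so a fortiori stabilises $\mathcal{C}$ and restricts to the identity on $I$) and realises the above permutation of the $K_i$. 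Thus $t_1$ meets every hypothesis on $\widetilde{t}_1$ in Theorem~\ref{thm:t1}.

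With $t_1$ and $\phi$ in hand, Bavard's recipe produces $h_1=t_1\phi t_1\phi^{-1}t_1$ and $h_2=\phi h_1^{-1}\phi^{-1}$, both of which lie in $rV/Z(rV)$. Theorem~\ref{thm:t1} then gives at once that $rV/Z(rV)$, being a subgroup of $MCG(\mathbf{R}^2\setminus K)$ containing $h_1$ and $h_2$, has an infinite-dimensional space of quasimorphisms, and hence so does $rV$. (One could instead reprove Bavard's conditions (1)--(3) directly for this $h_1,h_2$ by following \cite[Section~4]{bavard}, but this is unnecessary since Theorem~\ref{thm:t1} already applies to any subgroup containing $h_1$ and $h_2$.)

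I expect the only real work to be the verification that the concrete homeomorphism $\widetilde{t}_1$ just described genuinely represents an element of the subgroup $rV/Z(rV)\le MCG(\mathbf{R}^2\setminus K)$: one must line up the combinatorial model of $rV$ (trees, braids, ribbon twists) with the geometric model (admissible subsurfaces, asymptotic quasi-rigidity, acting on ``half'' of $S^2\setminus K$ with a permitted half-twist of the boundary), check that the chosen placement of $K$ inside $\mathbf{R}^2$ together with $I$ and $\mathcal{C}$ is compatible with that identification, and fix orientation conventions carefully enough to be sure the relevant tree-pair element induces the substitution sending $K_i$ to $K_{i+1}$ rather than its inverse or a reflected variant. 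This is bookkeeping rather than a new difficulty, but it is where care is needed.
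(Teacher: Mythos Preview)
Your overall strategy coincides with the paper's: reduce to $Q(rV/Z(rV))$, invoke Theorem~\ref{thm:t1}, observe that $\phi\in rV/Z(rV)$, and reduce everything to exhibiting a single suitable $t_1$. Your computation of the prefix substitution $0\mapsto 01$, $10\mapsto 1$, $11\mapsto 00$ is also correct and pins down the right element of $V$.

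The gap is in the verification that your proposed $\widetilde{t}_1$ stabilises $\mathcal{C}$. You write ``$\widetilde{t}_1$ \ldots\ fixes $\mathcal{C}=\partial D^2$ pointwise'', but $\mathcal{C}$ is \emph{not} $\partial D^2$: by definition $\mathcal{C}$ is the union of a horizontal segment \emph{through} $K$ and a semicircle above it, so $K\subset\mathcal{C}$. In particular no $\widetilde{t}_1$ with $\widetilde{t}_1(K_i)=K_{i+1}$ can fix $\mathcal{C}$ pointwise, and fixing $\partial D^2$ tells you nothing about $\mathcal{C}$. So the justification collapses, and the step you label ``bookkeeping'' is exactly where the argument breaks.

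Moreover, this is not repairable within $bV$. Under any $\widetilde{t}_1$ satisfying Bavard's conditions, the semicircular arc of $\mathcal{C}$ (sitting in the gap between $K_{-1}$ and $K_0$) must be carried to the short horizontal arc between $K_0$ and $K_1$; tracking orientations, the subsurface carrying $K(11)$ onto $K(00)$ is forced to undergo a full twist relative to the ambient disk. This is precisely why the paper uses the genuine $rV$ element $[T_1,\,s_1^{-1}s_2^{-1}(0,0,-2),\,T_2]$ and remarks that $\mathcal{C}$ is stabilised ``thanks to the third strand twisting''. The ribbon twist cannot be absorbed into a braid in $bV$, which is also why the paper establishes the result for $rV$ first and only then passes to $bV$, $bF$, $bP$ via Lemma~\ref{lem:restr} and Lemma~\ref{lem:rV_mod_bP}.
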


\begin{proof}
We will prove that $Q(rV/Z(rV))$ is infinite-dimensional, which implies our result. By Theorem~\ref{thm:t1}, it suffices to show that the elements $h_1$ and $h_2$ can be realised inside of $rV/Z(rV)$. Since each of $h_1, h_2$ is obtained as a product of conjugates of $t_1$ by $\phi$ and since $\phi\in rV/Z(rV)$, it suffices to show that $t_1$ can be realised inside $rV/Z(rV)$.

Recall that in Theorem~\ref{thm:t1}, the homeomorphism $\widetilde{t}_1$ representing $t_1$ can be any homeomorphism of $\mathbf{R}^2$ satisfying the following properties:
\begin{enumerate}
    \item $\widetilde{t}_1$ stabilises the topological circle $\mathcal{C}$;
    \item $\widetilde{t}_1|_I$ is the identity;
    \item $\widetilde{t}_1(K_i) = K_{i+1}$ for each $i\in\mathbf{Z}$.
\end{enumerate}
Note that $\widetilde{t}_1$ is defined on all of $\mathbf{R}^2$, so the image $\widetilde{t}_1(K_i)$ is well-defined, even if what we are interested in is a class $t_1 \in MCG(\mathbf{R}^2 \setminus K)$ (this is the usual equivocation between punctures and marked points).

Consider the element $[T_1,s_1^{-1}s_2^{-1}(0,0,-2),T_2]$ of $rV$ represented as in Figure~\ref{fig:MCG}.

\begin{figure}[htb]
    \centering
\begin{tikzpicture}[line width=0.8pt,scale=0.7]
    \draw
      (0,0) -- (1.5,0)
	  (0,0) to [out=-90, in=90, looseness=1] (-1,-1.5)
	  (1.5,0) to [out=-90, in=90, looseness=1] (2.5,-1.5)
	  (0,-1.5) to [out=90, in=90, looseness=1.5] (1.5,-1.5)
	  (2.5,-1.5) -- (2.5,-3)   (1.5,-1.5) -- (1.5,-3);
	\draw
      (-1,-1.5) to [out=-90, in=90, looseness=1] (-2,-3)
	  (0,-1.5) to [out=-90, in=90, looseness=1] (1,-3)
	  (-0.5,-2.5) to [out=-180, in=90, looseness=1] (-1,-3)
	  (-0.5,-2.5) to [out=0, in=90, looseness=1] (0,-3);
	 \draw
	  (-2,-3) to [out=-90, in=90, looseness=1] (1.5,-6)
		(-1,-3) to [out=-90, in=90, looseness=1] (2.5,-6);
	 \draw[white,line width=16pt]
	  (.5,-3) to [out=-90, in=90, looseness=1] (-1.5,-6);
	 \draw
	  (0,-3) to [out=-90, in=90, looseness=1] (-2,-6)
	  (1,-3) to [out=-90, in=90, looseness=1] (-1,-6);
	 \draw[white,line width=16pt]
	  (2,-3) to [out=-90, in=90, looseness=1] (0,-6);
	 \draw
	  (1.5,-3) to [out=-90, in=90, looseness=1] (-.5,-6)
	  (2.5,-3) to [out=-90, in=90, looseness=1] (0.5,-6);
	 \draw
	  (1.5,-6) to [out=-90, in=90, looseness=1] (2.5,-7);
	 \draw[white,line width=4pt]
	  (2.5,-6) to [out=-90, in=90, looseness=1] (1.5,-7);
	 \draw
	  (2.5,-6) to [out=-90, in=90, looseness=1] (1.5,-7);
	 \draw
	  (1.5,-7) to [out=-90, in=90, looseness=1] (2.5,-8);
	 \draw[white,line width=4pt]
	  (2.5,-7) to [out=-90, in=90, looseness=1] (1.5,-8);
	 \draw
	  (2.5,-7) to [out=-90, in=90, looseness=1] (1.5,-8);
	 \draw (-2,-6) -- (-2,-8)   (-1,-6) -- (-1,-8)   (-.5,-6) -- (-.5,-8)   (.5,-6) -- (.5,-8);
	 \begin{scope}[yshift=-11cm,xshift=0.5cm,xscale=-1,yscale=-1]
	 \draw
      (0,0) -- (1.5,0)
	  (0,0) to [out=-90, in=90, looseness=1] (-1,-1.5)
	  (1.5,0) to [out=-90, in=90, looseness=1] (2.5,-1.5)
	  (0,-1.5) to [out=90, in=90, looseness=1.5] (1.5,-1.5)
	  (2.5,-1.5) -- (2.5,-3)   (1.5,-1.5) -- (1.5,-3);
	\draw
      (-1,-1.5) to [out=-90, in=90, looseness=1] (-2,-3)
	  (0,-1.5) to [out=-90, in=90, looseness=1] (1,-3)
	  (-0.5,-2.5) to [out=-180, in=90, looseness=1] (-1,-3)
	  (-0.5,-2.5) to [out=0, in=90, looseness=1] (0,-3);
	 \end{scope}
	  
\begin{scope}[xshift=10cm,yshift=-9.5cm]

\draw[dotted] (3.25,1.75) -- (2.25,2.5);
\draw[dotted] (3.25,2.5) -- (2.25,3.25);
\draw[white,line width=0.27cm] (2.25,1.75) -- (3.25,2.5);
\draw[dotted] (2.25,1.75) -- (3.25,2.5);
\draw[white,line width=0.27cm] (2.25,2.5) -- (3.25,3.25);
\draw[dotted] (2.25,2.5) -- (3.25,3.25);

\draw[dotted] (2.25,3) to[out=90,in=-90] (-3.25,8);
\draw[dotted] (3.25,3) to[out=90,in=-90] (-2.25,8);
\draw[dotted] (2.25,0) -- (2.25,3)   (3.25,0) -- (3.25,3);
\draw[white,line width=0.2cm] (-1.75,5) -- (-1.75,8);
\draw[dotted] (-1.75,5) -- (-1.75,8);
\draw[dotted] (-3.5,0) to[out=90,in=-90] (-1.75,5);
\draw[white,line width=0.2cm] (-.5,0) to[out=90,in=-90] (-.75,8);
\draw[dotted] (-.5,0) to[out=90,in=-90] (-.75,8);
\draw[white,line width=0.2cm] (.75,0) to[out=90,in=-90] (.5,8);
\draw[dotted] (.75,0) to[out=90,in=-90] (.5,8);
\draw[white,line width=0.2cm] (1.75,4) to[out=90,in=-90] (3.5,8);
\draw[dotted] (1.75,4) to[out=90,in=-90] (3.5,8);
\draw[dotted] (1.75,0) -- (1.75,4);

\draw[SkyBlue,line width=0.1cm] (-3.75,0) to[out=90,in=90,looseness=.7] (3.75,0);
\draw[red,line width=0.2cm] (-.3,0) -- (.3,0);
\draw[Peach,line width=0.2cm] (.3,0) -- (1.9,0);
\draw[Goldenrod,line width=0.2cm] (1.9,0) -- (2.1,0);
\draw[LimeGreen,line width=0.2cm] (2.1,0) -- (3.82,0);
\draw[Orchid,line width=0.2cm] (-3.82,0) -- (-0.3,0);

\draw[Goldenrod,line width=0.1cm] (-3.75,8) to[out=90,in=90,looseness=.7] (3.75,8);
\draw[red,line width=0.2cm] (-.3,8) -- (.3,8);
\draw[Peach,line width=0.2cm] (.3,8) -- (3.82,8);
\draw[LimeGreen,line width=0.2cm] (-3.82,8) -- (-2.1,8);
\draw[SkyBlue,line width=0.2cm] (-2.1,8) -- (-1.9,8);
\draw[Orchid,line width=0.2cm] (-1.9,8) -- (-.3,8);

\draw[dotted] (2.25,0) -- (2.25,3)   (3.25,0) -- (3.25,3);
\draw[dotted] (-1.75,5) -- (-1.75,8);
\draw[dotted] (-3.5,0) to[out=90,in=-90] (-1.75,5);
\draw[dotted] (-.5,0) to[out=90,in=-90] (-.75,8);
\draw[dotted] (.75,0) to[out=90,in=-90] (.5,8);
\draw[dotted] (1.75,4) to[out=90,in=-90] (3.5,8);
\draw[dotted] (1.75,0) -- (1.75,4);

\draw (0,0) ellipse (4cm and 1.8cm);
\draw (-2,0) ellipse (1.5cm and 1cm);
\draw[lightgray] (2,0) ellipse (1.5cm and 1cm);
\draw (2.75,0) ellipse (0.5cm and 0.25cm);
\draw (1.25,0) ellipse (0.5cm and 0.25cm);

\draw[->] (-4.2,2) -- (-4.2,6);

\draw (0,8) ellipse (4cm and 1.8cm);
\draw[lightgray] (-2,8) ellipse (1.5cm and 1cm);
\draw (2,8) ellipse (1.5cm and 1cm);
\draw (-2.75,8) ellipse (0.5cm and 0.25cm);
\draw (-1.25,8) ellipse (0.5cm and 0.25cm);
\end{scope}

\end{tikzpicture}
\caption{The desired element. On the left is a ribbon strand diagram representing this element. On the right is the corresponding mapping class, as in Figure~\ref{fig:bV_as_MCG}. We indicate the circle $\mathcal{C}$ with rainbow colours, to make it clear how the mapping class acts on it. The fixed interval $I$ is red.}
\label{fig:MCG}
\end{figure}
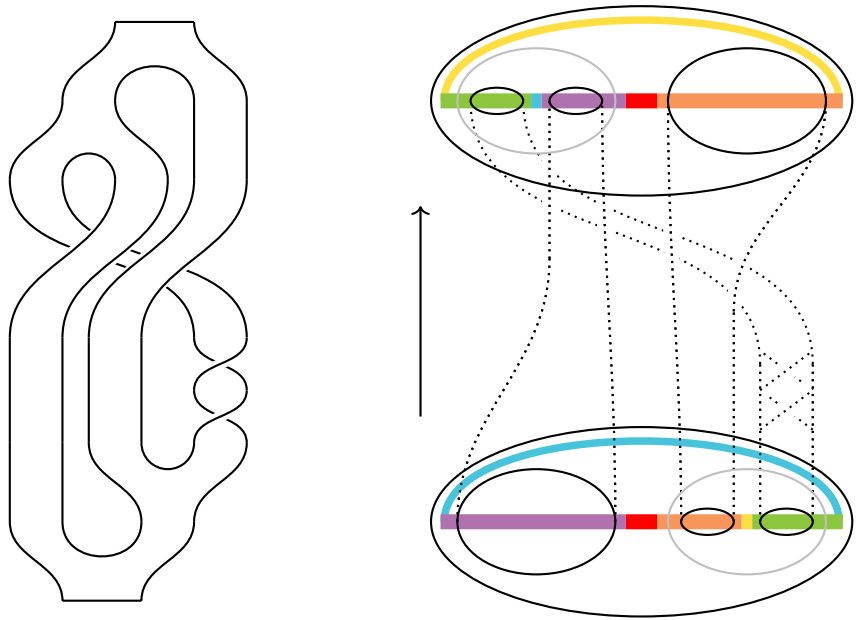

With $\mathcal{C}$ and $I$ indicated in the picture, it is clear that up to isotopy $\mathcal{C}$ is stabilised (thanks to the third strand twisting), and that $I$ is fixed pointwise. One can also check that $K_i$ is sent to $K_{i+1}$ for each $i\in\mathbf{Z}$. We conclude that all the criteria are satisfied, and so we are done.
\end{proof}

\subsection{Quasimorphisms on $bV$}

The final step in the proof of Theorem \ref{thm:main1} is to show that the quasimorphisms on $rV$ constructed in the previous subsection restrict to non-trivial quasimorphisms on $bV$, $bF$, and $bP$. This will be a consequence of the following general statement applied to $rV$ and $bP$, which follows from left exactness of $Q$; see \cite[Remark~2.90]{calegari}.

\begin{lemma}
\label{lem:restr}
Let $\Gamma$ be a group and $\Lambda \leq \Gamma$ a normal subgroup, and suppose that $Q(\Gamma/\Lambda) = 0$. Then the restriction $Q(\Gamma) \to Q(\Lambda)$ is injective.
\end{lemma}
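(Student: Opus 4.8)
The plan is to use the long exact sequence relating quasimorphisms, homomorphisms, and second bounded cohomology (Proposition~\ref{prop:hbqm}) together with the analogous exactness for the subgroup, and to exploit functoriality of these sequences under the inclusion $\Lambda \to \Gamma$ and the projection $\Gamma \to \Gamma/\Lambda$. Concretely, one has the pointed set / vector space version of a ``five-term exact sequence'' in low-degree bounded cohomology: for a normal subgroup $\Lambda \trianglelefteq \Gamma$ with quotient $G = \Gamma/\Lambda$, there is an exact sequence
\[
0 \to \HH^1_b(G) \to \HH^1_b(\Gamma) \to \HH^1_b(\Lambda)^{G} \to \HH^2_b(G) \to \HH^2_b(\Gamma),
\]
and since $\HH^1_b$ of any group vanishes this does not immediately help; instead one works one step down, with $Q$ in place of $\HH^1_b$. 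The cleanest route is the one indicated in the statement: the functor $Q(-)$ is left exact on the category of groups in the sense that for a short exact sequence $1 \to \Lambda \to \Gamma \to G \to 1$ the sequence $0 \to Q(G) \to Q(\Gamma) \to Q(\Lambda)$ is exact, where the first map is inflation along $\Gamma \to G$ and the second is restriction along $\Lambda \to \Gamma$; this is precisely \cite[Remark~2.90]{calegari}. Granting this, the conclusion is immediate.

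So the key steps, in order, are as follows. First, recall (or cite \cite[Remark~2.90]{calegari}) the left-exact sequence
\[
0 \longrightarrow Q(\Gamma/\Lambda) \xrightarrow{\ \mathrm{infl}\ } Q(\Gamma) \xrightarrow{\ \mathrm{res}\ } Q(\Lambda)
\]
valid for any group $\Gamma$ and normal subgroup $\Lambda$. Second, observe that exactness at $Q(\Gamma)$ means precisely that the kernel of the restriction map $Q(\Gamma) \to Q(\Lambda)$ is the image of the inflation map from $Q(\Gamma/\Lambda)$. Third, invoke the hypothesis $Q(\Gamma/\Lambda) = 0$: the image of inflation is then $0$, so $\ker(\mathrm{res}) = 0$, i.e.\ $\mathrm{res}\colon Q(\Gamma) \to Q(\Lambda)$ is injective. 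That is the whole argument.

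The only real content, and hence the ``main obstacle,'' is justifying the exactness of the three-term sequence — i.e.\ that a quasimorphism on $\Gamma$ restricting to a bounded function on $\Lambda$ is, up to bounded distance, pulled back from $\Gamma/\Lambda$. One can either cite \cite[Remark~2.90]{calegari} directly, or sketch the proof: if $q \in Q(\Gamma)$ restricts to a bounded function on $\Lambda$, pass to the homogenisation $\bar q$, which is a homogeneous quasimorphism with $\bar q|_\Lambda = 0$ (a homogeneous quasimorphism bounded on a subgroup vanishes on it); then $\bar q$ is conjugation-invariant and vanishes on $\Lambda$, hence is constant on cosets of $\Lambda$ — one checks $|\bar q(g\lambda) - \bar q(g)| \le D(\bar q)\cdot(\text{something})$ and uses homogeneity plus the commutator estimate to upgrade this to exact constancy on cosets — and therefore descends to a homogeneous quasimorphism on $\Gamma/\Lambda$ whose inflation is $\bar q$. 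Since we are allowed to assume the cited remark, in the write-up I would simply state the sequence, attribute it, and deduce injectivity in two lines.

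\begin{proof}
By \cite[Remark~2.90]{calegari}, for any group $\Gamma$ and normal subgroup $\Lambda \trianglelefteq \Gamma$ the sequence
\[
0 \longrightarrow Q(\Gamma/\Lambda) \xrightarrow{\ \mathrm{infl}\ } Q(\Gamma) \xrightarrow{\ \mathrm{res}\ } Q(\Lambda)
\]
is exact, where $\mathrm{infl}$ is induced by the quotient map $\Gamma \to \Gamma/\Lambda$ and $\mathrm{res}$ is induced by the inclusion $\Lambda \to \Gamma$. Exactness at $Q(\Gamma)$ says that $\ker(\mathrm{res})$ equals the image of $\mathrm{infl}$. By hypothesis $Q(\Gamma/\Lambda) = 0$, so the image of $\mathrm{infl}$ is trivial, whence $\ker(\mathrm{res}) = 0$. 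Therefore $\mathrm{res}\colon Q(\Gamma) \to Q(\Lambda)$ is injective.
\end{proof}
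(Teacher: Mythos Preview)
Your proof is correct and is exactly the argument the paper intends: it states the lemma as an immediate consequence of the left exactness of $Q$ from \cite[Remark~2.90]{calegari}, without writing out any further details. Your write-up simply makes the deduction explicit.
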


\begin{proof}[Proof of Theorem \ref{thm:main1}]
Let $\Gamma$ be any group such that $bP \leq \Gamma \leq rV$, for instance any of the groups in the statement of the theorem. Note that the quotient $rV / bP$  is uniformly perfect by Lemma~\ref{lem:rV_mod_bP}, so every quasimorphism on $rV / bP$ is bounded \cite[Lemma 2.2.4]{calegari}, i.e., $Q(rV / bP) = 0$. Hence Lemma~\ref{lem:restr} applies, and the restriction $Q(rV) \to Q(bP)$ is injective. Since this map factors through $Q(rV) \to Q(\Gamma)$, this restriction is also injective. We conclude by Proposition \ref{prop:rV_inf_dim}.
\end{proof}

Since every quasimorphism on a uniformly perfect group is bounded \cite[Lemma 2.2.4]{calegari}, an immediate corollary of Theorem \ref{thm:main1} is the following.

\begin{corollary}
\label{cor:up}
The group $bV$ is not uniformly perfect.\qed
\end{corollary}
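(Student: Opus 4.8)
The plan is to read this off immediately from Theorem~\ref{thm:main1} together with the standard relationship between uniform perfectness and quasimorphisms. First, Theorem~\ref{thm:main1} gives that $Q(bV)$ is infinite-dimensional, so in particular $Q(bV) \neq 0$; since, by the convention fixed in the introduction, $Q(bV)$ denotes quasimorphisms modulo bounded functions, this says precisely that $bV$ admits a quasimorphism $q$ that is \emph{not} at bounded distance from $0$. Second, I would invoke the fact that every quasimorphism on a uniformly perfect group is bounded \cite[Lemma~2.2.4]{calegari}. The contrapositive then forces $bV$ not to be uniformly perfect, which is the assertion.

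For the record, the reason uniform perfectness rules out unbounded quasimorphisms is elementary: if every element of $bV$ were a product of at most $N$ commutators, then the homogeneous representative $\bar q$ of $q$ would satisfy $|\bar q(g)| \le (2N-1)\,D(\bar q)$ for every $g \in bV$, using that a homogeneous quasimorphism is bounded on commutators by its defect and is nearly additive; but a bounded homogeneous quasimorphism vanishes identically, contradicting the unboundedness of $q$. Equivalently one can route the argument through stable commutator length: Theorem~\ref{thm:main1} and Bavard duality yield an element of $bV$ with $\scl \neq 0$, whereas $\scl$ vanishes on every uniformly perfect group.

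There is essentially nothing to overcome here beyond Theorem~\ref{thm:main1} itself, which is where all the difficulty lies; the single point worth a remark is that a nonzero class in $Q(bV)$ is, by definition of $Q$, represented by an unbounded quasimorphism. The interest of the statement lies in the contrast with Thompson's group $V$, which is uniformly simple and hence uniformly perfect.
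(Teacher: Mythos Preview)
Your argument is correct and matches the paper's own approach exactly: the corollary is stated with a \qed and is deduced immediately from Theorem~\ref{thm:main1} together with \cite[Lemma~2.2.4]{calegari}, precisely as you do. The additional explanation via homogeneous quasimorphisms and the alternative route through Bavard duality are fine but unnecessary for the paper's purposes.
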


Of course we also conclude that $rV$ is not uniformly perfect, despite being perfect (Corollary~\ref{cor:rV_perfect}). Note that $bF$ is not perfect (it has abelianisation $\mathbf{Z}^4$), but it follows from \cite{zaremsky18} that $bF'$ is perfect. However, we can deduce in the same way:

\begin{corollary}
\label{cor:up2}
The group $bF'$ is not uniformly perfect.\qed
\end{corollary}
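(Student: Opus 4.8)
The plan is to show that $Q(bF')$ is infinite-dimensional; the corollary then follows exactly as Corollary~\ref{cor:up} followed from Theorem~\ref{thm:main1}, since every quasimorphism on a uniformly perfect group is bounded \cite[Lemma 2.2.4]{calegari}. The subtlety is that $Q(bF/bF') = Q(\mathbf{Z}^4) \cong \mathbf{R}^4 \neq 0$, so Lemma~\ref{lem:restr} does not apply directly to the pair $bF' \trianglelefteq bF$; instead I would pass to second bounded cohomology, where amenability of the quotient \emph{is} usable.

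Concretely, write $\overline{Q}(\Gamma) \coloneqq Q(\Gamma)/\ZZ^1(\Gamma)$, which by Proposition~\ref{prop:hbqm} injects into $\HH^2_b(\Gamma)$ via $q \mapsto [\delta^1(q)]$, naturally in $\Gamma$ with respect to restriction. Since $bF/bF' \cong \mathbf{Z}^4$ is amenable, Proposition~\ref{prop:amenableext} gives an injection $\HH^2_b(bF) \hookrightarrow \HH^2_b(bF')$ induced by the inclusion $bF' \hookrightarrow bF$. In the commutative square formed by the two inclusions $\overline{Q}(-) \hookrightarrow \HH^2_b(-)$ and the two restriction maps, the composite $\overline{Q}(bF) \hookrightarrow \HH^2_b(bF) \hookrightarrow \HH^2_b(bF')$ is injective and factors through $\overline{Q}(bF') \hookrightarrow \HH^2_b(bF')$; hence the restriction map $\overline{Q}(bF) \to \overline{Q}(bF')$ is injective. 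Now $Q(bF)$ is infinite-dimensional by Theorem~\ref{thm:main1} while $\ZZ^1(bF) \cong \mathbf{R}^4$ is finite-dimensional, so $\overline{Q}(bF)$ is infinite-dimensional, and therefore so is $\overline{Q}(bF')$. Since $bF'$ is perfect \cite{zaremsky18}, we have $\ZZ^1(bF') = 0$, so $Q(bF') = \overline{Q}(bF')$ is infinite-dimensional, and in particular $bF'$ is not uniformly perfect.

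There is essentially no computation here: the only thing to be careful about is the functoriality bookkeeping in the commutative square, and recording that $\HH^2_b(bF)$ is infinite-dimensional \emph{through} $\overline{Q}(bF)$ (which is what makes the diagram chase land back in $Q(bF')$ rather than merely in $\HH^2_b(bF')$). If one prefers, the same three-line argument shows more generally that for any group $G$ with $\HH^2_b(G)$ infinite-dimensional and $G/G'$ amenable, any perfect normal subgroup of $G$ containing $G'$ — in particular $G'$ itself when it is perfect — fails to be uniformly perfect.
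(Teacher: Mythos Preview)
Your proof is correct, and it is a careful spelling-out of what the paper leaves as a bare \qed. The paper's ``in the same way'' is meant to invoke the deduction \emph{Theorem~\ref{thm:main1} $\Rightarrow$ not uniformly perfect} used for $bV$ and $rV$, and the only extra step for $bF'$ is exactly the one you identify: passing from $Q(bF)$ (known to be infinite-dimensional) down to $Q(bF')$. You handle this via the amenability of $bF/bF'\cong\mathbf{Z}^4$ and Proposition~\ref{prop:amenableext}, together with the naturality square for $\overline{Q}(-)\hookrightarrow \HH^2_b(-)$; this is clean and correct, and your observation that Lemma~\ref{lem:restr} does \emph{not} apply directly (since $Q(\mathbf{Z}^4)\ne 0$) is exactly the reason the step deserves a sentence rather than nothing. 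The general remark at the end is also fine.
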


We should also mention another group fitting between $bP$ and $rV$ and thus having infinite-dimensional space of quasimorphisms, namely the ``braided $T$'' group from \cite{witzel19}. This is the subgroup of $bV$ consisting of elements $[T_-,\beta,T_+]$ such that $\rho(\beta)\in S_n$ is a cyclic permutation. \\

Let us discuss restricting quasimorphisms of $bV$ to $\widehat{bV}$. If $\Gamma \leq MCG(\mathbf{R}^2 \setminus K)$ has a bounded orbit in $X_r$, then the quasimorphisms produced via this action are bounded on $\Gamma$. This is analogous to the behaviour of finite type mapping class groups, in particular for the braid group \cite{feller}, and was already noted by Calegari for $MCG(\mathbf{R}^2 \setminus K)$ \cite{blogpost}. We see that in fact this happens for $\widehat{bV}$, since it fixes the isotopy class of the ray going from the rightmost point of the Cantor set to infinity on the right. Thanks to Theorem \ref{thm:main2}, we can actually prove a stronger version of this statement. Namely, not only are these quasimorphisms bounded on $\widehat{bV}$, but the same is true for every quasimorphism of $bV$.

\begin{corollary}
\label{cor:imageofhat}
For every quasimorphism $q$ of $bV$, the image of $\widehat{bV}$ under $q$ is bounded.
\end{corollary}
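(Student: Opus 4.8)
The plan is to replace $q$ by its homogenisation $\overline{q}$, a homogeneous quasimorphism on $bV$ lying at bounded distance from $q$; it is then enough to show that $\overline{q}$ is bounded on $\widehat{bV}$. By Corollary~\ref{cor:main}, $Q(\widehat{bV})$ is one-dimensional, spanned by the class of $\chi_1$. Since homogenisation commutes with restriction to a subgroup, $\overline{q}|_{\widehat{bV}}$ is exactly the homogeneous representative of its class in $Q(\widehat{bV})$, so $\overline{q}|_{\widehat{bV}} = c\,\chi_1$ as functions, for some $c \in \mathbf{R}$ (here $\chi_1$ is its own homogeneous representative, being a homomorphism). The whole statement thus reduces to proving $c=0$, for then $\overline{q}|_{\widehat{bV}}$ is identically zero.

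First I would observe that $\chi_1$ vanishes on $\widehat{bV}(1)$, since every element of $\widehat{bV}(1)$ has a representative $(T_-,\beta,T_+)$ with $T_-$ and $T_+$ of right depth $1$; consequently $\overline{q}|_{\widehat{bV}(1)} = c\,\chi_1|_{\widehat{bV}(1)} = 0$. Next, evaluating on the standard second generator $x_1$ of Thompson's group $F \le \widehat{bV}$ — for which $\chi_1(x_1)=1$ (immediate from the definition of $\chi_1$, just as for $x_0$) — gives $c = c\,\chi_1(x_1) = \overline{q}(x_1)$. So it suffices to show $\overline{q}(x_1) = 0$.

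The key point is that $x_1$ acts as the identity, and without any braiding, on the left-hand dyadic cone (it is supported on the right-hand cone). Let $\tau \in bV$ be a lift of the order-two element of $V$ that swaps the two dyadic cones; then $\tau^{-1} x_1 \tau$ acts as the identity, again with no braiding, on the cone corresponding to the rightmost leaf, and is therefore an element of $\widehat{bV}(1)$. Since $\overline{q}$ is conjugation-invariant and vanishes on $\widehat{bV}(1)$, we conclude $\overline{q}(x_1) = \overline{q}(\tau^{-1} x_1 \tau) = 0$, hence $c = 0$, and $q$ is bounded on $\widehat{bV}$.

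The one step requiring genuine care — and the main obstacle — is verifying that $\tau^{-1} x_1 \tau$ really lies in $\widehat{bV}(1)$: concretely, one must check with strand diagrams that conjugating by $\tau$ carries the ``spectator cone'' of $x_1$ onto the cone of the rightmost leaf and that the half-twist of $\tau$ cancels against that of $\tau^{-1}$ over that cone, so that both trees can be taken of right depth $1$ and the rightmost strand stays unbraided. Everything else is formal bookkeeping with homogeneous quasimorphisms, together with Corollary~\ref{cor:main}.
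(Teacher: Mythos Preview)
Your proof is correct, and it is a genuinely different (and somewhat cleaner) route than the paper's. Both arguments rest on Corollary~\ref{cor:main}, but they extract the conclusion differently. The paper does not homogenise: it introduces a second copy $\psi^{-1}\widehat{bV}\psi$ (the ``leftmost strand unbraided'' subgroup, conjugate via the half-twist $\psi\in rV$), and for each $g\in\widehat{bV}$ splits $g=(gh^{-1})h$ with $h$ chosen so that $gh^{-1}\in\widehat{bV}'$ and $h\in(\psi^{-1}\widehat{bV}\psi)'$; Corollary~\ref{cor:main} applied to each copy then bounds $|q(gh^{-1})|$ and $|q(h)|$ directly, without ever isolating the constant. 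Your approach instead homogenises once, identifies $\overline q|_{\widehat{bV}}$ exactly with $c\,\chi_1$, and kills $c$ by evaluating at the single element $x_1$ and invoking conjugation-invariance of $\overline q$. What you gain is concision and a clean reason why the constant vanishes; what the paper's argument buys is an explicit uniform bound $2r(q)+D(q)$ without passing through homogeneity. The strand-diagram check that $\tau^{-1}x_1\tau\in\widehat{bV}(1)$ is straightforward: since $x_1$ is genuinely the identity (no braiding) on the left cone, the $s_1$ and $s_1^{-1}$ from $\tau$ and $\tau^{-1}$ cancel over that cone once it is carried to the right, leaving a representative with right depth~$1$ trees and an unbraided rightmost strand.
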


\begin{proof}
Let $\psi=[\cdot,1_{B_1}(1),\cdot]$, so viewing $rV/Z(rV)$ as a subgroup of $MCG(\mathbf{R}^2\setminus K)$ we have $\psi Z(rV)=\phi$. Note that the conjugate $\psi^{-1}\widehat{bV}\psi$ equals the subgroup of $bV$ consisting of all elements where the \emph{left}most strand does not braid with anything. Let $\chi_0\colon \psi^{-1}\widehat{bV}\psi \to \mathbf{Z}$ be the map sending $\psi^{-1}g\psi$ to $\chi_1(g)$. Since conjugation by $\psi$ is an isomorphism, Corollary~\ref{cor:ablnz_to_1} implies that the kernel of $\chi_0$ equals the derived subgroup $(\psi^{-1}\widehat{bV}\psi)'$.

Let $g \in \widehat{bV}$, and choose $h \in \widehat{bV}\cap \psi^{-1}\widehat{bV}\psi$ such that $\chi_1(h)=\chi_1(g)$ and $\chi_0(h)=0$. In particular, $h$ lies in $(\psi^{-1}\widehat{bV}\psi)'$ and $gh^{-1}$ lies in $\widehat{bV}'$. By Corollary~\ref{cor:main}, there exists a scalar $\lambda \in \mathbf{R}$ such that $q|_{\widehat{bV}}$ is at a bounded distance $r(q)$ from $\lambda \cdot \chi_1$, where $\chi_1$ is the abelianisation map of $\widehat{bV}$ (Corollary \ref{cor:ablnz_to_1}). It follows that $$|q(gh^{-1})| \leq |\lambda \cdot \chi_1(gh^{-1})| + r(q) = r(q).$$
We also get $|q(h)|\le r(q)$, by the same argument applied to $\psi^{-1}\widehat{bV}\psi$ (which is isomorphic to $\widehat{bV}$), up to taking a larger $r(q)$. Thus
$$|q(g)| \leq |q(gh^{-1})| + |q(h)| + D(q) \leq 2 r(q) + D(q).$$
This shows that $q|_{\widehat{bV}}$ is bounded, which concludes the proof.
\end{proof}

Together with Theorem \ref{thm:main1}, this implies that, analogously to Corollary \ref{cor:up}, there is no uniform-length factorisation for elements of $bV$ in terms of conjugates of elements in $\widehat{bV}$. Indeed, if such a factorisation did exist, we could run a similar argument as in the proof of Corollary~\ref{cor:imageofhat}, and obtain that every quasimorphism of $bV$ is bounded. \\

As one last indication of braided Thompson groups exhibiting unusual bounded cohomological behaviour, consider the short exact sequence $1\to \widehat{bV}\cap bP \to \widehat{bV} \to \widehat{V} \to 1$. By Theorem~\ref{thm:main2}, $\HH_b^2(\widehat{bV})=0$, and in fact the proof works using permutations instead of braids, \emph{mutatis mutandis}, to show that $\HH_b^2(\widehat{V})=0$ (also, it is true in general that a quotient of a group with vanishing second bounded cohomology itself has vanishing second bounded cohomology \cite{bouarich}). However, by Theorem~\ref{thm:main1} we have that $Q(bP)$ is infinite-dimensional, and thus so is $Q(\widehat{bV}\cap bP)$, since $\widehat{bV}\cap bP$ surjects onto $bP$ (Lemma~\ref{lem:onto_bP}); in particular $\HH^2_b(\widehat{bV} \cap bP)$ is infinite-dimensional.

This gives a concrete example of the failure of a $2$-out-of-$3$ property for vanishing of second bounded cohomology: if a group $\Gamma$ has vanishing second bounded cohomology, and a quotient $\Gamma/N$ has the same property, then the kernel $N$ can still have infinite-dimensional second bounded cohomology. For comparison, if $\HH^2_b(N) = 0$, then $\HH^2_b(\Gamma) = 0$ if and only if $\HH^2_b(\Gamma/N) = 0$ \cite[Corollary 4.2.2]{moraschiniraptis}. The failure of this $2$-out-of-$3$ property was observed in \cite[Theorem 4.5]{binate} for every degree, but this is to our knowledge the first ``naturally occurring'' example in degree $2$, as well as the first finitely generated one (and even type $\F_\infty$).


\bibliographystyle{abbrv}
\bibliography{references}

\noindent{\textsc{Department of Mathematics, ETH Z\"urich, Switzerland}}

\noindent{\textit{E-mail address:} \texttt{francesco.fournier@math.ethz.ch}} \\

\noindent{\textsc{Department of Mathematics,
University of \Hawaii at \Manoa.}}

\noindent{\textit{E-mail address:} \texttt{lodha@hawaii.edu}} \\

\noindent{\textsc{Department of Mathematics and Statistics, University at Albany (SUNY)}}

\noindent{\textit{E-mail address:} \texttt{mzaremsky@albany.edu}}

\end{document}